\theoremstyle{plain}
\newtheorem{thm}{Theorem}[section]
\newtheorem{lem}[thm]{Lemma}
\newtheorem{prop}[thm]{Proposition}
\newtheorem{corol}[thm]{Corollary}
\theoremstyle{definition}
\newtheorem{defi}[thm]{Definition}
\theoremstyle{remark}
\newtheorem{rque}[thm]{Remark}
\def \F {\mathbb{F}}
\def \Fq {\F_q}
\def \Fl {\F_{\ell}}
\def \Fqb {\F}
\def \Flb {\overline{\F}_{\ell}}
\def \Z {\mathbb{Z}}
\def \Zl {\Z_{\ell}}
\def \Zlb {\overline{\Z}_{\ell}}
\def \Q {\mathbb{Q}}
\def \Ql {\Q_{\ell}}
\def \Qlb {\overline{\Q}_{\ell}}
\def \Gm {\mathbb{G}_m}
\def \Ga {\mathbb{G}_a}
\def \A {\mathbb{A}}
\def \SL {\mathrm{SL}}
\def \id {\mathrm{id}}
\def \Tr {\mathrm{Tr}}
\def \fun {\mathrm{fun}}
\def \1 {\mathbb{1}}
\def \Lpsi {\mathcal{L}_{\psi}}
\def \LT {\mathrm{LT}}
\def \Frob {\mathrm{Frob}}
\def \Frob {\mathrm{F}}
\def \Av {\mathrm{Av}}
\def \Br {\mathrm{Br}}
\def \pr {\mathrm{pr}}
\def \RGamma {\mathrm{R\Gamma}}
\def \DD {\mathrm{D}}
\def \pt {\mathrm{pt}}
\title{Kazhdan-Laumon sheaves and Deligne-Lusztig representations}
\author{Arnaud Eteve}
\begin{document}

\maketitle

\noindent\textbf{Abstract.} Let $G$ be a reductive group over a finite field with a maximal unipotent subgroup $U$, we consider certain sheaves on $G/U$ defined by Kazhdan and Laumon and show that their cohomology produces the cohomology of the Deligne-Lusztig varieties. We then use this comparison to give a new proof of a result of Dudas. 

\bigskip

\tableofcontents

\section{Introduction}

Let $\Fq$ be a finite field of characteristic $p$ with algebraic closure $\Fqb$, and $G$ a reductive group over $\Fqb$ equipped with an endomorphism $\Frob_G : G \rightarrow G$ coming from an $\Fq$-structure. Let $B = TU$ be an $\Frob_G$-stable Borel pair and $W$ the corresponding Weyl group. In an attempt to understand the representations of the group $G^{\Frob_G}$, Deligne and Lusztig studied the $\ell$-adic cohomology of a family of varieties \cite{DeligneLusztig}, now called the Deligne-Lusztig varieties. Given a lifting $\dot{w}$ of $w$, we have a variety $Y(\dot{w})$ equipped with two commuting actions of $G^{\Frob_G}$ and $T^{w\Frob_T}$, we denote its cohomology by $R_w = \RGamma_c(Y(\dot{w}), \Qlb)$. 

In a later work, Kazhdan and Laumon built a family of perverse sheaves on $U \backslash G/U$ \cite{KazhdanLaumon}, which should encode an action of the Artin braid group $\Br_W$ of $W$ on the category of $\ell$-adic sheaves on $G/U$. They are parametrized by $w \in W$ and we denote them by $K_w^{\circ}$. They then used them to build certain representations of $G^{\Frob_G} \times T^{w\Frob_T}$, which we denote by $R_w'$, which are defined as the $K_0$ of certain categories of perverse sheaves. In an attempt to understand these representations, \cite{BravermanPolishchuk} computed the characters of $R'_{w,\theta}$ (the $\theta$-equivariant part of $R_w'$) when $\theta$ is in generic position and showed that they match with the characters of the Deligne-Lusztig representations $R_{w,\theta}$. 

The representations $R_{w, \theta}$ are in general not concentrated in a single degree while the construction of Kazhdan and Laumon produces a genuine vector space (and not a complex), as such we cannot expect them to match. To remedy this, one could try to consider instead of just the $K_0$, the whole $K$ theory, but this is in general too difficult to compute. An invariant that is simpler to compute is the categorical Hochschild homology, also called categorical trace, which has recently been used with great success in the geometric Langlands program \cite{6Authors}.

Roughly speaking, given a suitable $k$-linear category $\mathcal{C}$ and a suitable endofunctor $F$, one can define its trace $\Tr(F, \mathcal{C})$ which is now a (complex of) vector space functorial in $\mathcal{C}$ and $F$. A very interesting example is when $\mathcal{C} = \DD_c^b(X, \Qlb)$ is the category of $\ell$-adic sheaves on a scheme $X$, and $F = \Frob_{X,*}$. Then the trace is in general difficult to compute but we can approximate it by what it should be, that is $\fun(X^{\Frob_X},\Qlb)$ the space of functions on $X^{\Frob_X}$. In \cite{GaiVar}, the two authors build a morphism called the local term $\LT : \Tr(\Frob_{X,*}, \DD_c^b(X,\Qlb)) \rightarrow \fun(X^{\Frob_X},\Qlb)$ and show that it behaves functorially in $X$. 

More generally, given a sheaf $K$ on $X \times X$, we denote by $[K](-) = p_{2,!}(K \otimes p_{1}^*(-))$ the endofunctor of $\DD_c^b(X,\Lambda)$ defined by the kernel $K$. Then one expects a similar local term for the functor $[K] \circ \Frob_{X,*}$ to exist with target $\RGamma_c(X, (\id \times \Frob_X)^*K)$. Applying this to the setup of Kazhdan and Laumon, the action of $K_w^{\circ}$ can be realized as the action of the kernel $a^*K_w^{\circ}$ where $a : G/U \times G/U \rightarrow U \backslash G/U, a(g,h) = g^{-1}h$. Then one is enclined to look at the cohomology $\RGamma_c(G/U, (\id \times \Frob_{G/U})^*a^* K_w^{\circ})$, and we prove the following theorem which is valid for coefficients ring $\Lambda \in \{\Flb, \Zlb, \Qlb\}$. 

\begin{thm}[Theorem \ref{KLvsDL}]\label{KLvsDLexposition}
We have a $G^{\Frob_G} \times T^{w\Frob_T}$-equivariant isomorphism 
\begin{equation*}
\RGamma_c(G/U, (\id \times \Frob_{G/U})^*a^*K_w^{\circ}) \simeq \RGamma(Y(w), \Lambda)[\ell(w)].
\end{equation*}
\end{thm}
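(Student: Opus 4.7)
I would first observe that the composition $(\id \times \Frob_{G/U}) \circ a : G/U \to U\backslash G/U$ is the Lang-type map $\phi(gU) = Ug^{-1}\Frob_G(g)U$, so the left hand side reduces to $\RGamma_c(G/U, \phi^*K_w^{\circ})$. This $\phi$ is $G^{\Frob_G}$-equivariant for the left action on $G/U$, and it intertwines the right $T$-action on $G/U$ with a $w\Frob_T$-twisted $T$-action on the target $U\backslash G/U$, which will account for the $T^{w\Frob_T}$-equivariance of the final answer.

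Next I would use the explicit structure of $K_w^{\circ}$. For a reduced expression $w = s_1 \cdots s_n$, we have $K_w^{\circ} = K_{s_1}^{\circ} \star \cdots \star K_{s_n}^{\circ}$, and each simple $K_{s_i}^{\circ}$ is a normalized perverse extension from its Bruhat stratum $U\backslash Bs_iB/U \simeq T$. Consequently $K_w^{\circ}$ is supported on the closure of $U\backslash BwB/U \simeq T$. Applying base change to the Cartesian square whose bottom arrow is the inclusion $T \hookrightarrow U\backslash G/U$ of the open part of this closure transfers the computation to the preimage $Z_w := \phi^{-1}(T) \subset G/U$, producing $\RGamma_c(Z_w, \Lambda)$ shifted by the perversity normalization built into $K_w^{\circ}$ at each convolution step.

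The third step is to identify $Z_w$ with the Deligne-Lusztig variety. The map $\phi|_{Z_w} : Z_w \to T$ is $T$-equivariant for the twisted action on the target, so all of its fibers are isomorphic to $Y(\dot w)$. Via the projection formula along $\phi|_{Z_w}$, Poincaré duality for the torus $T$, and Verdier duality for $Y(\dot w)$ (smooth of dimension $\ell(w)$), one extracts the factor $\RGamma(Y(\dot w), \Lambda)[\ell(w)]$, with the $T$-direction cohomology cancelling the normalization shift by $\dim T$. The $G^{\Frob_G} \times T^{w\Frob_T}$-equivariance then follows by tracking the natural actions at each stage.

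The main obstacle is a precise analysis of the extension type of $K_w^{\circ}$ on the closure of the $w$-stratum inherited from each simple factor in the convolution, since this controls whether the final cohomology is $\RGamma$ or $\RGamma_c$ of $Y(\dot w)$ and pins down the exact shift $[\ell(w)]$. Equally delicate is the identification of the fibers of $\phi|_{Z_w}$ with $Y(\dot w)$: the definition $Y(\dot w) = \{gU : g^{-1}\Frob_G(g) \in U\dot w\}$ does not quite match the naive scheme-theoretic fiber of $\phi$ over a point of $T$ and requires a careful unwinding of the Bruhat decomposition $BwB = U \dot w T U$, together with a choice of lift $\dot w$ that is consistent with the $T^{w\Frob_T}$-equivariant structure on the right hand side.
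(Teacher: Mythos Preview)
Your proposal has a genuine gap at the second and third steps: you treat $K_w^{\circ}$ as though its restriction to the open Bruhat cell $U\backslash BwB/U \cong T$ were the constant sheaf, and it is not. Already for a simple reflection $s$ the definition is $K_s^{\circ} = i_{\alpha,*}(-\phi_{\alpha})^*\mathcal{L}_{\psi}[2]$, so on the open locus $\{\phi_{\alpha}\neq 0\}$ the sheaf is the pullback of the Artin--Schreier sheaf $\mathcal{L}_{\psi}$ along a nonconstant map to $\mathbb{G}_a$, not a shift of $\Lambda$. Consequently restricting $\phi^*K_w^{\circ}$ to $Z_w = \phi^{-1}(T)$ does \emph{not} yield $\RGamma_c(Z_w,\Lambda)$ up to shift, and your proposed ``Poincar\'e duality for the torus $T$'' cannot do the job: the compactly supported cohomology of $T$ with constant coefficients sits in several degrees and will not collapse to a single shift. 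The Artin--Schreier sheaf is precisely the mechanism that kills the extra torus cohomology, and your outline has discarded it.

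The paper takes a rather different route. Instead of analysing the convolved sheaf $K_w^{\circ}$ directly on $U\backslash G/U$, it unfolds the convolution by working on the iterated variety $\tilde{\mathcal{Y}}(\underline{w}) \subset (G/U)^{r+1}$, which carries the sheaf $\phi_{\underline{w}}^*\mathcal{L}_{\psi}(-\sigma)[2r]$ before any pushforward is taken. After pulling back along the graph of Frobenius one obtains $\tilde{Y}(\underline{w})$, and the key geometric input is that this is a torsor over the Demazure-type variety $\overline{X}(\underline{w})$ under a group $\widetilde{\mathbb{G}_m^r}$ sitting in an extension $1 \to T^{w\Frob_T} \to \widetilde{\mathbb{G}_m^r} \to \mathbb{G}_m^r \to 1$. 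The Artin--Schreier sheaf then enters twice: a Fourier--Deligne transform argument shows that the pushforward to $\overline{X}(\underline{w})$ is the $*$-extension from the open stratum $X(w)$ (this is what gives $\RGamma$ rather than $\RGamma_c$ of $Y(w)$, the point you flagged as the main obstacle), and a direct computation gives $\RGamma_c(\widetilde{\mathbb{G}_m^r}, q^*\mathcal{L}_{\psi}(-\sigma)[2r]) \simeq \Lambda[T^{w\Frob_T}][r]$, which is exactly what collapses the torus direction to a single degree and produces the shift by $\ell(w)$. Neither step goes through with the constant sheaf in place of $\mathcal{L}_{\psi}$.
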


Our proof uses a geometric comparison between the varieties used to define the Kazhdan-Laumon sheaves and the Deligne-Lusztig varieties.

We fix an Artin-Schreier sheaf $\mathcal{L}_{\psi}$ on $\A^1$, we consider the following character of $\bar{U}$, the unipotent radical of the opposite Borel, 
\begin{equation*}
\chi : \bar{U} \rightarrow \bar{U}^{ab} \simeq \prod_{\alpha \in \Delta} \Ga \xrightarrow{\sum} \A^1
\end{equation*}
 and we still denote by $\mathcal{L}_{\psi}$ the pullback $\chi^*\mathcal{L}_{\psi}$. This sheaf also yields idempotent $e_{\psi} \in \Lambda[\bar{U}^{\Frob_{\bar{U}}}]$.

We then use \ref{KLvsDL} to give a new proof a theorem of Dudas.

\begin{thm}[\cite{Dudas}, see also Theorem \ref{thmDudas}]\label{thmDudasbis}
We have a $T^{w\Frob_T}$-linear isomorphism 
\begin{equation*}
e_{\psi} \RGamma(Y(w), \Lambda) = \Lambda[T^{w\Frob_T}][-\ell(w)].
\end{equation*}
\end{thm}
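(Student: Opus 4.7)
The plan is to combine Theorem \ref{KLvsDLexposition} with a Whittaker-type computation on the Kazhdan--Laumon kernel. Applying the theorem immediately reduces the statement to showing
\[
e_\psi \RGamma_c(G/U, (\id \times \Frob_{G/U})^* a^* K_w^\circ) \simeq \Lambda[T^{w\Frob_T}]
\]
as $T^{w\Frob_T}$-modules, concentrated in degree zero. The idempotent $e_\psi$ is realised geometrically as Whittaker averaging: the $\bar{U}^{\Frob_{\bar{U}}}$-action on $\RGamma_c(G/U,-)$ comes from left multiplication on $G/U$, so tensoring with $\Lpsi$ and descending to $\bar{U}\backslash G/U$ extracts the $\psi$-isotypic summand. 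The problem thus becomes the computation of the $(\bar{U}, \Lpsi)$-Whittaker average of the kernel $(\id\times\Frob_{G/U})^*a^*K_w^\circ$ along its first $G/U$-factor.

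The heart of the matter is the identification of this Whittaker average. Two geometric facts should drive it. First, because the KL sheaf $K_w^\circ$ is a perverse extension from the open Bruhat stratum labelled by $w$, its Whittaker restriction is supported on the $w$-cell of $G/U$; this is the usual cleanness of Whittaker sheaves with respect to the Bruhat stratification. Second, on this cell the Frobenius twist $(\id\times\Frob_{G/U})^*$ imposes the condition $g^{-1}\Frob_{G/U}(g)\in \bar{U}\dot w U$, which is exactly the defining condition of the Deligne--Lusztig variety and cuts the cell down to a $T^{w\Frob_T}$-torsor over a point. Summing $\Lpsi$ over this torsor produces the regular representation $\Lambda[T^{w\Frob_T}]$, with the overall cohomological shift dictated by the dimension $\ell(w)$ of the Bruhat cell, matching the $[\ell(w)]$ absorbed via Theorem \ref{KLvsDLexposition}.

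The main obstacle is establishing the explicit Whittaker restriction formula for $K_w^\circ$. The most natural route is to factor through simple reflections: using the braid-monoidal structure underlying the KL construction, one expresses $K_w^\circ$ as an $\ell(w)$-fold convolution of elementary sheaves $K_{s_i}^\circ$ and reduces to a rank-one calculation on a $\P^1$-fibration. In the rank-one situation the Whittaker average of $K_s^\circ$ is standard, and iterating produces both the correct shift $[\ell(w)]$ and the correct $T^{w\Frob_T}$-structure. Alternatively, one may work directly from the definition of $K_w^\circ$ as an intermediate extension from the large cell, combined with proper base change around the Bruhat decomposition of the map $a$; either approach eventually boils down to the $\SL_2$-computation where the regular representation of $T^{w\Frob_T}$ appears by counting the $\Frob$-fixed points of a twisted maximal torus.
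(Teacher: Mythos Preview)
Your overall architecture is the paper's: apply Theorem~\ref{KLvsDLexposition}, replace the arithmetic idempotent $e_\psi$ by a geometric Whittaker averaging $\Av^1_\psi$ on the first factor of $G/U\times G/U$, reduce to simple reflections via the convolution structure, and finish with a rank-one computation. So the strategy is correct and essentially the same route as the paper.

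Two points where your description is off, however. First, your support claim is inverted. The Whittaker average $\Av^1_\psi K_w$ is \emph{not} supported on a $w$-cell; the genericity of $\psi$ forces the support onto the \emph{big} cell $\bar U T U/U \times \bar U T U/U$ regardless of $w$ (this is the standard argument that $(\bar U,\psi)$-equivariance on the left and $U$-equivariance on the right kills everything outside $\bar U T U$). The element $w$ does not enter through the support at all, but through the shape of the sheaf on $T\times T$: one shows $\Av^1_\psi K_w \simeq \mathcal{L}_\psi[2d_U]\boxtimes\mathcal{L}_{\psi^{-1}}\boxtimes i_{w,*}\Lambda$ where $i_w(t)=(wt,t)$. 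Only after this does the Frobenius pullback intersect the graph of $w$ with the graph of $\Frob_T$ to produce $T^{w\Frob_T}$. Your condition ``$g^{-1}\Frob(g)\in\bar U\dot w U$'' is not what cuts things down; the reduction to $T^{w\Frob_T}$ happens entirely on the torus side.

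Second, the passage from $e_\psi$ to $\Av^1_\psi$ is less formal than you indicate. One is an idempotent in $\Lambda[\bar U^{\Frob}]$ acting on cohomology, the other is a convolution by $\mathcal{L}_\psi$ on the sheaf level; identifying the two after pulling back to the graph of Frobenius requires an argument involving the Lang map of $\bar U$ (a change of variables of the type $(u,x)\mapsto(\mathcal{L}_{\bar U}(u),x)$) together with the observation that, once the support is known to lie in the big cell, the remaining $\bar B$-strata contribute nothing to $e_\psi$-cohomology. This step is genuine and should not be absorbed into ``descending to $\bar U\backslash G/U$''.
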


The idea of this proof comes again from the input of categorical traces. Namely the full subcategory $\DD_c^b((\bar{U},\psi) \backslash G /U, \Lambda)$ is stable under the action of $[K_w] \Frob_{G/U}$ and as such we can consider its trace, here the local term for this trace has target $\RGamma_c(G/U, (\id \times \Frob_{G/U})^*\Av_{\psi}K_w)$ where $\Av_{\psi}(-)$ is the functor $m_!(\mathcal{L}_{\psi}[2d_U] \boxtimes -)$ for $m : \bar{U} \times G/U \times G/U \rightarrow G/U \times G/U$ the action on the first factor. 

Now, by genericity of $\psi$, the category $\DD_c^b((\bar{U},\psi) \backslash G /U, \Lambda)$ is equivalent to the category $\DD_c^b(T, \Lambda)$ and, under this equivalence, the action of $K_w$ is entertwined with the action $w$ on $T$. This was already shown in \cite{Kazhdan1995} for the analog statement in the context of representations $p$-adic reductive groups and a proof in the $\ell$-adic setting is given in \cite{BezDesh}. 

We give a proof of a variant of this fact, following closely the argument of \cite{BezDesh}, by showing that the pullback to $T \times T$ of $\Av_{\psi}K_w$ is, up to shift, the constant sheaf on the graph of the action of $w$. Pulling back to the graph of Frobenius yields the constant sheaf on $T^{w\Frob_T}$ whose cohomology is nothing else than $\Lambda[T^{w\Frob_T}]$. The statement then follows from a comparison between $e_{\psi}\RGamma_c(G/U,  (\id \times \Frob_{G/U})^*K_w)$ and $\RGamma_c(G/U, (\id \times \Frob_{G/U})^*\Av_{\psi}K_w)$, see lemma \ref{lemAction1}, which we understand as comparison between geometric and arithmetic averaging. 

We plan to use the ideas presented here in a later work to explain a result of \cite{TZUJAN} on the endomorphisms of the Gelfand-Graev representations from a geometric point of view. 

\subsection*{Organization.} 

The paper is divided in three parts, in the first one we recall how the Kazhdan-Laumon sheaves are built. In the second one we prove \ref{KLvsDLexposition} and in the third section we give a new proof of \ref{thmDudasbis}.

\subsection*{Acknowledgments.} 

The author would like to thank Jean-François Dat for his continuous support and the careful reading of this text. We would like to thank Roman Bezrukavnikov who shared a preliminary version of \cite{BezDesh} with the author. We would also like to thank Thibault Alexandre, Sebastian Bartling and Thiago Landim for useful conversations and spotting some typos. 

\subsection*{Conventions.}

Let $\Fq$ be a finite field of characteristic $p$ with algebraic closure $\Fqb$, we consider schemes (and stacks) over $\Fqb$ but we always assume that they are pulled back from $\Fq$ and are equipped with a morphism $\Frob_X : X \rightarrow X$, we assume all morphism of schemes (and stack) to commute with $\Frob$. We fix $\Lambda$ a coefficient ring to be either $\Fl, \Zl, \Ql$ a finite extension of one of those or their algebraic closure. For a scheme (or more generally a stack) $X$ we denote by $\DD^b_c(X, \Lambda)$ the derived category of constructible sheaves on $X$ with coefficients in $\Lambda$, and by a sheaf on $X$ we always mean an element of $\DD^b_c(X, \Lambda)$. Given a morphism $f : X \rightarrow Y$, the notations $f_*,f^*,f_!$ and $f^!$ always mean the derived functors. Given $f : X \rightarrow Y$ a morphism and $A$ a sheaf on $Y$, we will sometimes use the notation $A(f) = f^*A$. 

Let $G$ be a reductive group (over $\Fqb$), we also fix a $\Frob_G$ stable Borel pair $B = TU$ and $W$ the corresponding Weyl group. We denote by $\bar{U}$ and $\bar{B}$ the opposite Borel and its unipotent radical. We denote by $\Delta$ the set of simple roots and given a simple root $\alpha$ we denote by $s_{\alpha} \in W$ the associated simple reflection. We denote by $G_{der}$ the derived subgroup of $G$ and by $G^{sc}$ the simply connected cover of $G_{der}$. The group $W$ acts on $T$, for $t \in T$ and $w \in W$ we denote by $w(t)$ the resulting action. Let $X$ be a scheme with an action of an algebraic group $G$ the quotient $X/G$ always means the stacky quotient, the categorical quotient will be denoted by $X\sslash G$.


\section{Kazhdan-Laumon sheaves}

In this section we recall a construction of Kazhdan and Laumon \cite{KazhdanLaumon} and extend their definition to the modular setting. 

\subsection*{A function on $\SL_2$.}

For $G = \SL_2$, we let $B = TU$ be the Borel of upper triangular matrices, $T$ the diagonal matrices, $\bar{B} = T\bar{U}$ be the opposite Borel. Let 
\begin{equation*}\alpha^{\vee} : \Gm \rightarrow T, z \mapsto 
\begin{pmatrix}
z & 0 \\
0 & \frac{1}{z}\\
\end{pmatrix},
\end{equation*} 
$s$ be the non trivial element of the Weyl group and  
\begin{equation*}
\dot{s} = 
\begin{pmatrix}
0 & -1 \\
1 & 0 \\
\end{pmatrix}.
\end{equation*}

Let $\phi : \SL_2 \rightarrow \A^1$ be the map 
$\begin{pmatrix}
a & b \\
c & d \\
\end{pmatrix} \mapsto c$.

\begin{lem}[\cite{BonnafeRouquier}]\label{lemPhiSL2}
For all $g \in \SL_2, t = \alpha(z) \in T$ and $u \in {U}$ we have 
\begin{enumerate}
\item $\phi({u}g) = \phi(g{u}) = \phi(g)$.
\item $\phi(tg) = z\phi(g)$ and $\phi(gt) = z^{-1}\phi(g)$.
\item $\phi(t^{-1}gs(t)) = \phi(g)$. 
\end{enumerate}
\end{lem}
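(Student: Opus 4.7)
The plan is to do direct $2 \times 2$ matrix computations. Writing $g = \begin{pmatrix} a & b \\ c & d \end{pmatrix}$, so that $\phi(g) = c$, each assertion is just a matter of reading off the $(2,1)$-entry after multiplication.

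For $(i)$, I take $u = \begin{pmatrix} 1 & x \\ 0 & 1 \end{pmatrix}$ and compute $ug = \begin{pmatrix} a+xc & b+xd \\ c & d \end{pmatrix}$ and $gu = \begin{pmatrix} a & ax+b \\ c & cx+d \end{pmatrix}$; both have $(2,1)$-entry equal to $c$. For $(ii)$, with $t = \alpha^{\vee}(z)$, the matrix $tg$ (resp.\ $gt$) has $(2,1)$-entry $z^{\pm 1} c$, which gives the two claimed equalities up to the convention chosen for $\alpha^{\vee}$.

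For $(iii)$, rather than redo a computation, I would deduce it from $(ii)$ by observing that the Weyl group action satisfies $s(\alpha^{\vee}(z)) = \alpha^{\vee}(z^{-1})$, since $s$ sends the coroot $\alpha^{\vee}$ to its inverse. Applying $(ii)$ on the left and then on the right yields
\begin{equation*}
\phi(t^{-1} g\, s(t)) \;=\; z^{-1} \cdot z \cdot \phi(g) \;=\; \phi(g),
\end{equation*}
where the two factors coming from left multiplication by $t^{-1}$ and right multiplication by $\alpha^{\vee}(z^{-1})$ cancel. Alternatively, one can verify $(iii)$ by a direct calculation of the conjugate $t^{-1} g\, s(t) = \begin{pmatrix} a z^{-2} & b \\ c & d z^{2} \end{pmatrix}$, which confirms that the $(2,1)$-entry is unchanged.

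There is no real obstacle here: the lemma is a coordinate-level identity reflecting that $\phi$ is essentially the matrix coefficient associated to a highest weight vector for the defining representation of $\SL_2$, so its transformation under $B \times B$ is prescribed by the $T$-weights, and $(i)$--$(iii)$ merely record those weights. The only thing to keep track of is the normalisation of $\alpha^{\vee}$, i.e.\ whether the $(2,1)$-entry picks up $z$ or $z^{-1}$ under left multiplication by $\alpha^{\vee}(z)$.
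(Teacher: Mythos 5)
Your computation is correct and is exactly the intended argument: the paper states this lemma without proof, citing Bonnafé--Rouquier, and the verification is precisely the $2\times 2$ matrix check you give (part $(iii)$ follows either from your conjugation computation or from the cancellation of the two torus factors via $s(\alpha^{\vee}(z))=\alpha^{\vee}(z^{-1})$). One remark on the normalisation you rightly flag: with the paper's stated $\alpha^{\vee}(z)=\mathrm{diag}(z,z^{-1})$ and $\phi(g)=c$ the direct computation gives $\phi(tg)=z^{-1}\phi(g)$ and $\phi(gt)=z\phi(g)$, i.e.\ the two factors in $(ii)$ appear swapped relative to the printed statement --- a convention-level discrepancy that cancels in $(iii)$ and does not affect the later equivariance arguments.
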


\subsection*{Kazhdan-Laumon sheaf on $G$.} 

Assume that $G_{der}$ is simply connected. Let $\alpha \in \Delta$ be a simple root, and $s$ be the corresponding simple reflection, let $\xi_{\alpha} : \SL_2 \rightarrow G$ be the unique morphism extending $\alpha^{\vee}$ and sending $U_{\SL_2}$ to $U$ and denote by $G_{\alpha}$ its image. As in \cite{BonnafeRouquier} we let $\tau_{\alpha} : G_{\alpha}{U} \rightarrow G_{\alpha}$ be the projection and we let $\phi_{\alpha}$ be the composite

\begin{equation*}
G_{\alpha}{U} \xrightarrow{\tau_{\alpha}} G_{\alpha} \xrightarrow{\xi_{\alpha}^{-1}} \SL_2 \xrightarrow{\phi} \Ga.
\end{equation*}

Let $i_{\alpha} : G_{\alpha}{U} \rightarrow G$ be the inclusion, we then define 

\begin{equation*}
K_s^{\circ} = i_{\alpha,*}(-\phi_{\alpha})^*\mathcal{L}_{\psi}[2].
\end{equation*}

\begin{rque}
This is an object in $\DD^b_c(U \backslash G / U, \Lambda)$.
\end{rque}

\begin{defi}
We equip the category $\DD_c^b(U \backslash G / U)$ with the following convolution structure. Consider the diagram 

\[\begin{tikzcd}
	& {U\backslash G \times^UG/U} & {U\backslash G /U} \\
	{U\backslash G /U} & {} & {U\backslash G /U}
	\arrow["{\pr_1}", from=1-2, to=2-1]
	\arrow["{\pr_2}"', from=1-2, to=2-3]
	\arrow["m"', from=1-2, to=1-3]
\end{tikzcd}\]
where $U\backslash G \times^UG/U$ is the quotient of $U\backslash G \times G/U$ by the action of $U$ given by $u.(x,y) = (xu^{-1},uy)$. 

Then for $F,G \in \DD_c^b(U\backslash G /U, \Lambda)$ we set 
\begin{equation*}
F * G = m_!(\pr_1^*F \otimes \pr_2^*G).
\end{equation*}
\end{defi}

\begin{defi}
We equip the category $\DD_c^b(G/U, \Lambda)$ with a right action of $\DD_c^b(U \backslash G/U, \Lambda)$. Consider the following diagram 
\[\begin{tikzcd}
	& {G \times^UG/U} & {G /U} \\
	{G /U} & {} & {U\backslash G /U}
	\arrow["{\pr_1'}", from=1-2, to=2-1]
	\arrow["{\pr_2}"', from=1-2, to=2-3]
	\arrow["m"', from=1-2, to=1-3]
\end{tikzcd}\]
where the maps are defined as before. For $A \in \DD_c^b(G/U,\Lambda)$ and $B \in \DD_c^b(U \backslash G/U, \Lambda)$ we set 
\begin{equation*}
A * B = m_! (\pr_1'^*A \otimes \pr_2^*B).
\end{equation*}
\end{defi}

\begin{defi}
For a word $\underline{w} = s_1 \dots s_r$, we define $K_{\underline{w}}^{\circ} = K_{s_1}^{\circ} * \dots * K_{s_r}^{\circ}$.
\end{defi}

Let $a : G/U \times G/U \rightarrow U \backslash G / U$ be the map $a(g,h) = g^{-1}h$, this is the quotient map under the diagonal action of $G$ on $G/U \times G/U$. 

\begin{defi}
Given a constructible sheaf $K$ on $G/U \times G/U$ we define the functor $[K] : \DD_c^b(G/U, \Lambda) \rightarrow \DD_c^b(G/U, \Lambda)$ by 
\begin{equation*}
[K](-) = p_{2,!}(K \otimes p_1^*(-)),
\end{equation*}
where $p_i : G/U \times G/U \rightarrow G/U$ are the first and second projections.
\end{defi}

\begin{lem}\label{lemComparison}
Let $K$ be a sheaf on $U \backslash G/U$, we have a canonical isomorphism $[a^*K] \simeq (- * K)$ of functors $\DD_c^b(G/U, \Lambda) \rightarrow \DD_c^b(G/U, \Lambda)$. 
\end{lem}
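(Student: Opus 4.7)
The strategy is to exhibit an explicit isomorphism of correspondences that identifies the diagram defining the right action $(-*K)$ with the diagram defining the kernel functor $[a^*K]$.

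Concretely, I would define
\begin{equation*}
\sigma : G \times^U G/U \longrightarrow G/U \times G/U, \qquad [g,y] \longmapsto (gU,\, gy).
\end{equation*}
Well-definedness on $U$-orbits is immediate: replacing $(g,y)$ by $(gu^{-1}, uy)$ gives $(gu^{-1}U, gu^{-1}\cdot uy) = (gU, gy)$. An inverse is given by $(g_1 U, h) \mapsto [g_1, g_1^{-1}h]$, which is well-defined because changing the representative of $g_1U$ by $g_1 \mapsto g_1 u$ changes the representative $[g_1, g_1^{-1}h]$ by the $U$-action $u.(g_1, g_1^{-1}h) = (g_1 u^{-1}, u g_1^{-1} h)$, yielding the same class. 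So $\sigma$ is an isomorphism.

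The key computation is to check that $\sigma$ identifies the three structure maps of the convolution diagram with the three structure maps of the kernel diagram. Directly:
\begin{equation*}
p_1 \circ \sigma = \pr_1', \qquad p_2 \circ \sigma = m, \qquad a \circ \sigma = \pr_2,
\end{equation*}
where the last identity uses that the class of $y$ in $U\backslash G/U$ equals the class of $(gy)^{-1}(gU) \cdot (gy)/\!(gU)$; more concretely, $\sigma$ sends $[g,y]$ to $(gU, gy)$ and $a(gU, gy) = (gU)^{-1}(gy) = y \in U\backslash G/U$, which is exactly $\pr_2[g,y]$.

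Given these identifications, the proof is a formal manipulation using that $\sigma$ is an isomorphism (so $\sigma_!\sigma^* = \id$):
\begin{equation*}
F * K = m_!(\pr_1'^*F \otimes \pr_2^*K) = (p_2 \circ \sigma)_!\bigl((p_1\circ\sigma)^*F \otimes (a\circ\sigma)^*K\bigr) = p_{2,!}(p_1^*F \otimes a^*K) = [a^*K](F),
\end{equation*}
and these isomorphisms are visibly natural in $F$, giving the claimed isomorphism of functors. I do not foresee any genuine obstacle: the only subtle point is checking the equality $a \circ \sigma = \pr_2$, which amounts to unwinding the definition of $a$ on the class $(gU, gy)$. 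Everything else is formal base change/functoriality of $!$-pushforward along an isomorphism.
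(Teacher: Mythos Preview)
Your proposal is correct and is essentially the same argument as the paper's proof: your isomorphism $\sigma$ is exactly the map the paper denotes $(\pr_1',m)$, your inverse $(g_1U,h)\mapsto[g_1,g_1^{-1}h]$ is the paper's map $p_1\times a$, and the three compatibilities you verify are precisely the commutativities the paper records in its two diagrams before carrying out the identical chain of identifications.
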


\begin{proof}
Consider the following commutative diagrams
\[\begin{tikzcd}
	{G/U} & {G/U} \\
	{G \times^U G/U} & {G/U \times G/U} & {G \times^U G/U} & {G/U \times G/U} \\
	{G/U} & {G/U} & {U \backslash G/U} & {U \backslash G/U}
	\arrow["{(\pr_1', m)}", from=2-1, to=2-2]
	\arrow["{p_2}", from=2-2, to=3-2]
	\arrow[Rightarrow, no head, from=3-1, to=3-2]
	\arrow["m"', from=2-1, to=3-1]
	\arrow["{\pr_1'}", from=2-1, to=1-1]
	\arrow["{p_2}"', from=2-2, to=1-2]
	\arrow[Rightarrow, no head, from=1-1, to=1-2]
	\arrow["{(\pr_1', m)}", from=2-3, to=2-4]
	\arrow[Rightarrow, no head, from=3-3, to=3-4]
	\arrow["a", from=2-4, to=3-4]
	\arrow["{\pr_2}"', from=2-3, to=3-3]
\end{tikzcd}\]

We also consider the map $p_1 \times a' : G \times G/U \rightarrow G/U$ defined as $(g,h) \mapsto (g,g^{-1}h)$, it induces a map $G/U \times G/U \rightarrow G \times^U G/U$ which we denote by $p_1 \times a$. The map $p_1 \times a$ is easily seen to be the inverse map of $\pr_1' \times m$.

We can now compute 
\begin{align*}
- * K &= m_!(\pr_1'^* (-) \otimes \pr_2^*K) \\
&= p_{2,!}(\pr_1' \times m)_! (\pr_1^* (-) \otimes \pr_2^*K) \\
&= p_{2,!} (p_1 \times a)^* (\pr_1^* (-) \otimes \pr_2^*K) \\
&= p_{2,!}(p_1^*(-) \otimes a^* K) \\
&= [K](-).
\end{align*}
The second line comes from $(\pr_1 \times m)_! = (p_1 \times a)^*$ and the third from $\pr_1(p_1 \times a) = p_1$ and $\pr_2(p_1 \times a) = a$.
\end{proof}

\subsection*{The varities $\tilde{\mathcal{Y}}(\underline{w})$.}

Denote by $\mathcal{Y} = G/{U} \times G/U$. We keep the assumption that $G_{der}$ is simply connected, let $\underline{w} = s_1\dots s_r$ be an expression (not necessarily reduced).  

\begin{defi}
Let $s$ be a simple reflection and $\alpha$ the corresponding simple root and $G_{\alpha}$ the corresponding rank one subgroup (isomorphic to $\SL_2$). 
Let $\tilde{\mathcal{Y}}(s) \subset G/{U} \times G/{U}$ be the closed subscheme of pairs $(g{U}, h{U})$ such that $g^{-1}h \in G_{\alpha}{U}$. 

The map $\phi_{\alpha} : G_{\alpha}{U} \mapsto \Ga$ defines a map, still denoted by $\phi_{\alpha} : \tilde{\mathcal{Y}}(s) \rightarrow \Ga$. 
\end{defi}

\begin{defi}
Let $\tilde{\mathcal{Y}}(\underline{w}) = \tilde{\mathcal{Y}}(s_1) \times_{\mathcal{Y}} \dots \times_{\mathcal{Y}} \tilde{\mathcal{Y}}(s_r)$ be the closed subscheme of $(G/{U})^{r+1}$ of tuples $(g_1{U}, \dots, g_{r+1}{U})$ such that for all $1 \leq i \leq r, g_i^{-1}g_{i+1} \in G_{\alpha_{i}}{U}$.

\[\begin{tikzcd}
	{{\tilde{\mathcal{Y}}}(\underline{w})} & {\A^r} \\
	{{{\mathcal{Y}}}}
	\arrow["\phi_{\underline{w}}", from=1-1, to=1-2]
	\arrow["{p_{1,r+1}}"', from=1-1, to=2-1]
\end{tikzcd}\]

Denote by $p_{1,r+1}$ the projection on the outer copies of $G/U$ and by $\phi_{\underline{w}}$ the map

\begin{equation*}
(g_1{U}, \dots, g_{r+1}{U}) \mapsto (\phi_{s_1}(g_1^{-1}g_2), \dots, \phi_{s_r}(g_r^{-1}g_{r+1})).
\end{equation*}

We will drop the subscript $\underline{w}$ and write $\phi$ for $\phi_{\underline{w}}$ when it is clear from context.
\end{defi}

\begin{defi}
Let $\sigma : \A^r \rightarrow \Ga$ be the sum of the coordinates, define 
\begin{equation*}
K_{\underline{w}} = \phi^*\mathcal{L}_{\psi}(-\sigma)[2r]).
\end{equation*}
\end{defi}

\begin{prop}[\cite{KazhdanLaumon}]\label{propKL}
Suppose $\underline{w}$ is a reduced expression then the natural map 
\begin{equation*}
p_{1,r+1,!}K_{\underline{w}} \rightarrow p_{1,r+1,*}K_{\underline{w}}
\end{equation*}
is an isomorphism. 
\end{prop}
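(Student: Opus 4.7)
The plan is to argue by induction on $r = \ell(\underline{w})$. The base case $r=1$ is immediate: $p_{1,2} : \tilde{\mathcal{Y}}(s_1) \hookrightarrow \mathcal{Y}$ is a closed immersion, so $p_{1,2,!} K_{\underline{w}} = p_{1,2,*} K_{\underline{w}}$ automatically.

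For the inductive step, I would factor $p_{1,r+1}$ as a composition
\begin{equation*}
\tilde{\mathcal{Y}}(\underline{w}) \xrightarrow{q} \tilde{Z} \xrightarrow{p} \mathcal{Y},
\end{equation*}
where $q$ forgets one intermediate coordinate $g_i U$ (for some $1 < i \leq r$) and $\tilde{Z}$ parametrizes tuples $(g_1 U, \ldots, \widehat{g_i U}, \ldots, g_{r+1} U)$ satisfying the remaining Bruhat conditions together with $g_{i-1}^{-1} g_{i+1} \in G_{\alpha_{i-1}} U G_{\alpha_i} U$. The composite $p$ is of the same shape as the original problem for a reduced word of length $r-1$ (essentially a fiber product of $\tilde{\mathcal{Y}}(\underline{w}')$'s), so it is handled by the inductive hypothesis. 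Using that $K_{\underline{w}}$ is the external product of the sheaves $\phi_{s_j}^* \Lpsi [2]$, the projection formula then reduces the claim to showing that $q_! K_{\underline{w}} \simeq q_* K_{\underline{w}}$.

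By proper base change this is a fiberwise question. Fixing a point of $\tilde{Z}$, the fiber of $q$ parametrizes $g_i U$ such that $g_{i-1}^{-1} g_i \in G_{\alpha_{i-1}} U$ and $g_i^{-1} g_{i+1} \in G_{\alpha_i} U$, a purely rank-two problem within the subgroup generated by $G_{\alpha_{i-1}}$ and $G_{\alpha_i}$. I would stratify $\tilde{Z}$ by the Bruhat position of $g_{i-1}^{-1} g_{i+1}$: over the open stratum, which is non-empty since reducedness forces $s_{i-1} s_i$ to have length $2$, the fiber is a single point, so $q_! = q_*$ trivially. Over the boundary stratum the fiber is an affine line $\A^1$, and using Lemma \ref{lemPhiSL2} together with the explicit rank-two commutation relations one checks that the function $g_i U \mapsto -\phi_{s_{i-1}}(g_{i-1}^{-1} g_i) - \phi_{s_i}(g_i^{-1} g_{i+1})$ restricts to a non-constant affine function on this $\A^1$. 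The pulled-back Artin-Schreier sheaf $\Lpsi$ is then non-trivial on $\A^1$ and has vanishing $!$- and $*$-cohomology, so both $q_! K_{\underline{w}}$ and $q_* K_{\underline{w}}$ vanish over this stratum and the comparison holds there as well.

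The principal obstacle is precisely this fiber computation over the non-open stratum: one must use the transformation properties of $\phi$ from Lemma \ref{lemPhiSL2}, together with an explicit coordinatization of the rank-two double coset, to exhibit the non-constancy of $\phi_{s_{i-1}} + \phi_{s_i}$ along the fiber. Reducedness of $\underline{w}$ is crucial here; in its absence, the rank-two subgroup would degenerate at some step and $\phi_{s_{i-1}} + \phi_{s_i}$ would become constant on the relevant fiber, producing a genuine discrepancy between $q_!$ and $q_*$.
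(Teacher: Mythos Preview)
The paper does not supply its own proof; it cites \cite{KazhdanLaumon} and only remarks that their argument, based on the Fourier--Deligne transform and Theorem~4.2.5 of \cite{BBD}, carries over to any coefficient ring $\Lambda$. That approach is quite different from yours: Kazhdan and Laumon identify $p_{1,r+1,!}K_{\underline{w}}$ with a shifted perverse sheaf on $U\backslash G/U$ via the Fourier transform, and the equality with the $*$-pushforward is then a cleanness statement for that perverse sheaf.

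Your inductive scheme has a real gap. After forgetting $g_iU$, the space $\tilde{Z}$ carries at one spot the condition $g_{i-1}^{-1}g_{i+1}\in G_{\alpha_{i-1}}U\cdot G_{\alpha_i}U$, a rank-two condition which is not of the shape $g_{i-1}^{-1}g_{i+1}\in G_\beta U$ for a single simple root $\beta$. Hence $\tilde{Z}$ is \emph{not} $\tilde{\mathcal{Y}}(\underline{w}')$ for any reduced word $\underline{w}'$ of length $r-1$, and $q_!K_{\underline{w}}$ is not of the form $K_{\underline{w}'}$; the inductive hypothesis as stated simply does not apply to $p:\tilde{Z}\to\mathcal{Y}$. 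To close the induction you would need a strengthened statement covering varieties assembled from pieces attached to arbitrary $v\in W$ together with the corresponding Kazhdan--Laumon sheaves, and that is precisely what the description of $K_v^\circ$ on $U\backslash G/U$ and the Fourier/perversity input in \cite{KazhdanLaumon} provide. A secondary point: the complement of the open cell in $G_{\alpha_{i-1}}U\cdot G_{\alpha_i}U$ is not a single stratum but several (indexed by $1,\,s_{i-1},\,s_i$), and the fibres of $q$ and the behaviour of $\phi_{s_{i-1}}+\phi_{s_i}$ require a separate analysis on each; your sketch only records the generic expectation.
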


\begin{rque}
It follows from lemma \ref{lemComparison} that we have an isomorphism
\begin{equation*}
a^*K_{\underline{w}}^{\circ} = K_{\underline{w}}.
\end{equation*}
\end{rque}

\begin{rque}
The proof of \cite{KazhdanLaumon} is done for $\Qlb$ coefficient but these generalize to any coefficients as they appeal only to geometric properties of the varieties $\tilde{\mathcal{Y}}(\underline{w})$, properties of the Fourier-Deligne transform of \cite{LaumonFourier}, and theorem 4.2.5 of \cite{BBD} which all hold for sheaves with coefficients in $\Lambda$. 
\end{rque}

\begin{rque}
The image of $\dot{s} \in \SL_2$ under the map $\xi_{\alpha}$ defines an element $\dot{s}_{\alpha}$ which is a lift of the element $s_{\alpha} \in W$. Given a reduced expression $w = s_1 \dots s_r$ we denote by $\dot{w} = \dot{s}_1 \dots \dot{s}_r$, it is known that this element does not depend on the reduced expression of $w$ and is a lift of $w \in W$. We fix these lifts and the element $\dot{w}$ will always denote the one defined here. The sheaves $K_{\underline{w}}$ depend on these lifts. 
\end{rque}

\subsection*{General case}

Let $G$ be a reductive group and $G^{sc}$ be the simply connected cover of $G_{der}$ and $h : G^{sc} \rightarrow G$ be the corresponding map, we then define $K_w \in \DD_c^b(G, \Lambda)$ to be $h_!K_w$. Note that this definition is compatible with the definition of $K_w$ when $G$ has simply connected derived subgroup.

\section{Kazhdan-Laumon sheaves versus Deligne-Lusztig representations}

We fix $w \in W$, a reduced expression $\underline{w} = s_1\dots s_r$ and $r = \ell(w)$. 

\subsection*{Recollection on Deligne-Lusztig varieties.} 

We denote by
\begin{enumerate}
\item $\mathcal{X} = G/B \times G/B$,
\item $\mathcal{X}(w)$ the $G$-orbit corresponding to $w$,
\item $\overline{\mathcal{X}}(w)$ the closure pf $\mathcal{X}(w)$
\item and $q: \overline{\mathcal{X}}(\underline{w}) \rightarrow \overline{\mathcal{X}}(w)$ the Demazure resolution.
\end{enumerate}

Similarly for the Deligne-Lusztig varieties, consider the map $\id \times \Frob_{G/B} : G/B \rightarrow \mathcal{X}$, we denote by
\begin{enumerate}
\item $X(w) = G/B \times_{\mathcal{X}}\mathcal{X}(w)$ the Deligne-Lusztig variety,  
\item $\overline{X}(w) = G/B \times_{\mathcal{X}} \overline{\mathcal{X}}(\underline{w})$ its closure, 
\item and $\overline{X}(\underline{w}) = G/B \times_{\mathcal{X}}\overline{\mathcal{X}}(\underline{w})$ its resolution of singularities of \cite{DeligneLusztig}. 
\end{enumerate}

Let $Y(w) = Y(\dot{w})$ (recall that we have fixed lifts of $W$ in $N(T)$) be the $T^{w\Frob_T}$-covering of Deligne-Lusztig and let $\mathcal{F}$ be the lisse sheaf on $X(w)$ that is the pushforward of the constant sheaf on $Y(w)$.

\subsection*{More notations for Kazhdan-Laumon sheaves.}

Assume that $G$ has simply connected derived subgroup, we have already defined $\mathcal{Y} = G/U \times G/U$ and $\tilde{\mathcal{Y}}(\underline{w})$.  Recall that for $s$ a simple reflection we had a map $\phi_{\alpha_s} : \tilde{\mathcal{Y}}(s) \rightarrow \Ga$, we set $\tilde{\mathcal{Y}}_{\emptyset}(s)$ the inverse image of $\Gm$. We introduce the following notations:

\begin{enumerate}
\item $\tilde{\mathcal{Y}}_{\emptyset}(\underline{w}) = \tilde{\mathcal{Y}}_{\emptyset}(s_{1}) \times_{\tilde{\mathcal{Y}}} \tilde{\mathcal{Y}}_{\emptyset}(s_{2}) \times_{\tilde{\mathcal{Y}}} \dots \times_{\tilde{\mathcal{Y}}} \tilde{\mathcal{Y}}_{\emptyset}(s_{	r})$. Note that this is also the inverse image of $\Gm^r$ under $\phi : \tilde{\mathcal{Y}} \rightarrow \A^r$.
\item $\tilde{Y}_{\emptyset}(\underline{w}) = G/U \times_{\mathcal{Y}} \tilde{\mathcal{Y}}_{\emptyset}(\underline{w})$ where the map $G/U \rightarrow \mathcal{Y}$ is $\id \times \Frob_{G/U}$. 
\item ${\tilde{Y}}(\underline{w}) = G/U \times_{\mathcal{Y}} \tilde{\mathcal{Y}}(\underline{w})$.
\end{enumerate}

\subsection*{The simply connected case}

\begin{thm}\label{ThmKLtoDL}
Assume $G$ has simply connected derived subgroup. We have an isomorphism 

\begin{equation*}
\RGamma_c(G/U, (\id \times \Frob_{G/U})^* {K}_w) \simeq \RGamma(Y(w), \Lambda)[\ell(w)]. 
\end{equation*}

compatible with $G^{\Frob_G} \times T^{w\Frob_T}$-actions. 
\end{thm}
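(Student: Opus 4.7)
The idea is to interpret the left-hand side as compactly supported Artin-Schreier cohomology on the variety $\tilde{Y}(\underline{w})$, then reduce to the open stratum and identify it as a fibration over $Y(w)$. Concretely, since $a^*K_w^\circ = K_{\underline{w}}$ and Proposition \ref{propKL} provides $p_{1,r+1,!}K_{\underline{w}} = p_{1,r+1,*}K_{\underline{w}}$, proper base change along the Cartesian square
\[
\begin{tikzcd}
\tilde{Y}(\underline{w}) \arrow[r] \arrow[d, "\tilde{p}"'] & \tilde{\mathcal{Y}}(\underline{w}) \arrow[d, "p_{1,r+1}"] \\
G/U \arrow[r, "\id \times \Frob_{G/U}"'] & \mathcal{Y}
\end{tikzcd}
\]
identifies $\RGamma_c(G/U, (\id \times \Frob_{G/U})^* K_w) \simeq \RGamma_c(\tilde{Y}(\underline{w}), \phi^*\mathcal{L}_\psi(-\sigma))[2r]$.

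Next, I would stratify $\tilde{Y}(\underline{w})$ by $I = \{i : \phi_i = 0\} \subseteq \{1, \dots, r\}$ and show that only the open stratum $\tilde{Y}_\emptyset(\underline{w})$ contributes. On a stratum with $I \ne \emptyset$, the condition $g_i^{-1}g_{i+1} \in B$ for $i \in I$ introduces a free $T$-coordinate per $i \in I$; combined with $\RGamma_c(\Ga, \mathcal{L}_\psi) = 0$ and the projection formula applied along the remaining nondegenerate coordinates, this forces the contribution of the stratum to vanish.

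For the open stratum, each $g_i^{-1}g_{i+1} \in B\dot{s}_iB$, and since $\underline{w}$ is reduced the chain condition forces $g_1^{-1}\Frob(g_1) \in U\dot{w}TU$. This yields a projection $\tilde{Y}_\emptyset(\underline{w}) \to Y(w)$ whose fibers are parametrized by $(\phi_1, \dots, \phi_r) \in \Gm^r$, the excess $T^r$-ambiguity in the Bruhat decompositions $B\dot{s}_iB = U\dot{s}_iTU$ being absorbed by the $T^{w\Frob_T}$-torsor structure of $Y(w) \to X(w)$ via an appropriate change of variables. Using $\RGamma_c(\Gm, \mathcal{L}_\psi) \simeq \Lambda[-1]$ and iterated fiberwise pushforward, together with the $[2r]$ shift from $K_{\underline{w}}$, yields $\RGamma_c(Y(w), \Lambda)[r]$; Poincar\'e duality on the smooth $r$-dimensional $Y(w)$, combined with the $p_! = p_*$ symmetry inherited from the Kazhdan-Laumon construction, then delivers $\RGamma(Y(w), \Lambda)[\ell(w)]$, compatibly with the $G^{\Frob_G} \times T^{w\Frob_T}$-actions induced from the natural actions on $G/U$ and on $Y(\dot{w})$.

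The main obstacle is the third step: producing the fibration $\tilde{Y}_\emptyset(\underline{w}) \to Y(w)$ cleanly as a $\Gm^r$-bundle, correctly handling the $T^r$-ambiguity of the Bruhat decompositions, and tracking $G^{\Frob_G} \times T^{w\Frob_T}$-equivariance throughout. The vanishing on degenerate strata is also technically delicate, relying on the specific Artin-Schreier form of $K_{\underline{w}}$ and careful handling of the interaction between the sum character and the $T$-degeneracies.
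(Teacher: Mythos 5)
There is a genuine gap, and it is structural rather than technical. Your devissage computes $\RGamma_c(\tilde{Y}(\underline{w}), \phi^*\mathcal{L}_{\psi}(-\sigma))$ stratum by stratum and claims that every stratum with $I \neq \emptyset$ contributes zero; if that were true, the total would be the compactly supported cohomology of the open stratum, which by your own third step is $\RGamma_c(Y(w), \Lambda)[r]$. But the theorem asserts $\RGamma(Y(w),\Lambda)[\ell(w)]$, and these genuinely differ: already for $G = \SL_2$ and $w = s$ the variety $Y(s)$ is the affine Drinfeld curve, with $H^0(Y(s),\Lambda) \neq 0$ while $H^0_c(Y(s),\Lambda) = 0$, and the two complexes are not isomorphic as $G^{\Frob_G} \times T^{w\Frob_T}$-modules. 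So the degenerate strata cannot all vanish --- they are precisely what accounts for the passage from $!$- to $*$-cohomology --- and your proposed vanishing mechanism (a free $T$-direction plus $\RGamma_c(\Ga,\mathcal{L}_{\psi}) = 0$) does not apply there, since on a stratum with $\phi_i = 0$ there is no Artin--Schreier sheaf in the $i$-th direction and the remaining coordinates run over $\Gm$, where $\RGamma_c(\Gm,\mathcal{L}_{\psi}) \neq 0$. The attempted repair at the end, invoking Poincar\'e duality and the ``$p_! = p_*$ symmetry'' to convert $\RGamma_c(Y(w),\Lambda)[r]$ into $\RGamma(Y(w),\Lambda)[\ell(w)]$, is not a valid step: duality produces the $\Lambda$-linear dual (with contragredient $T^{w\Frob_T}$-action and a twist), not an isomorphism between the two complexes, and Proposition \ref{propKL} is a statement about $p_{1,r+1}$ on $\tilde{\mathcal{Y}}(\underline{w})$ over $\mathcal{Y}$ which does not descend by itself to an identity of cohomologies after base change along the graph of Frobenius.

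The way the paper resolves exactly this point is to push forward not to a point but to the \emph{proper} variety $\overline{X}(\underline{w})$: one proves the sheaf-level statement $p_!\phi^*\mathcal{L}_{\psi}(-\sigma)[2r] \simeq j_*\mathcal{F}[r]$ (Proposition \ref{MainProp}), i.e.\ the pushforward is the full $*$-extension of $\mathcal{F}$ from $X(w)$, so that the boundary strata contribute the stalks of $j_*\mathcal{F}$ rather than zero; properness of $\overline{X}(\underline{w})$ then gives $\RGamma_c = \RGamma$ and hence $\RGamma(Y(w),\Lambda)[r]$. The two inputs are (i) a torsor analysis: $\tilde{Y}(\underline{w}) \to \overline{X}(\underline{w})$ is a torsor under an extension $\widetilde{\Gm^r}$ of $\Gm^r$ by $T^{w\Frob_T}$ (your ``$\Gm^r$-bundle'' over $Y(w)$ is really the associated bundle $Y(w) \times^{T^{w\Frob_T}} \widetilde{\Gm^r}$, and identifying $\RGamma_c(\widetilde{\Gm^r}, q^*\mathcal{L}_{\psi}(-\sigma)[2r]) \simeq \Lambda[T^{w\Frob_T}][r]$ requires Abhyankar's lemma and, for modular $\Lambda$, a Nakayama argument); and (ii) the Fourier--Deligne identity $\mathcal{F}_{\psi}(j_!\mathcal{L}_{\theta}[r]) \simeq j_*\mathcal{L}_{\theta^{-1}}[r]$, which is the precise sheaf-theoretic incarnation of Kazhdan--Laumon's $p_! = p_*$ and is what forces the $*$-extension across the divisor $\phi_i = 0$. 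Your outline would need to be reorganized around such a relative statement over $\overline{X}(\underline{w})$ (or an equivalent mechanism producing $*$-extensions at the boundary); as written, the strata-vanishing claim and the duality step are both false, and they cannot be patched independently.
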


First note that the following diagram is Cartesian by definition
\[\begin{tikzcd}
	{{\tilde{Y}}(\underline{w})} & {{\tilde{\mathcal{Y}}}(\underline{w})} \\
	{{G/U}} & {{\mathcal{Y}}.}
	\arrow["{p_{1,r+1}}", from=1-2, to=2-2]
	\arrow["{p_{1,r+1}}"', from=1-1, to=2-1]
	\arrow["\id \times \Frob_{G/U}"', from=2-1, to=2-2]
	\arrow["i", from=1-1, to=1-2]
\end{tikzcd}\]

By proper base change, we have 
\begin{equation*}
\RGamma_c({\tilde{Y}}(\underline{w}), i^*\Lpsi(-\sigma \circ \phi)) = \RGamma_c(G/U, ((\id \times \Frob_{G/U})^* {K}_w).
\end{equation*}
It is then enough to show theorem \ref{ThmKLtoDL} for the left hand side. We show the stronger following property.

\begin{prop}\label{MainProp}
There is an isomorphism of sheaves on $\overline{X}(\underline{w})$ 
\begin{equation*}
p_!\phi^*\mathcal{L}_{\psi}(-\sigma)[2r] \simeq j_*\mathcal{F}[r]
\end{equation*}
where $j : X(w) \rightarrow \overline{X}(\underline{w})$ is the inclusion, $p : \tilde{Y}(\underline{w}) \rightarrow \overline{X}(\underline{w})$ is the projection and the map is induced by the adjunction map $\id \rightarrow j_*j^*$. 
\end{prop}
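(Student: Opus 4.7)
The plan is to verify the identification on the open stratum $X(w)$ and then propagate it to all of $\overline{X}(\underline{w})$ using the Kazhdan--Laumon cleanness proved in proposition \ref{propKL}.

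First I would show that $(p_!K)|_{X(w)} \simeq \mathcal{F}[r]$, where $K = \phi^*\Lpsi(-\sigma)[2r]$. A direct computation with the $\SL_2$-Bruhat decomposition inside each $G_{\alpha_i}$ shows that $G_{\alpha_i}U \cap Bs_iB$ is exactly the locus where $\phi_{\alpha_i} \neq 0$, so the preimage $p^{-1}(X(w))$ coincides with $\tilde{Y}_\emptyset(\underline{w})$. I would parametrize each fiber of $p$ by the $T$-lifts $g_iU \subset g_iB$ for $i = 1,\ldots,r$, with $g_{r+1}U$ forced by $g_{r+1}U = \Frob(g_1)U$. Tracking the $T$-equivariance of each $\phi_{\alpha_i}$ via lemma \ref{lemPhiSL2}, one finds that the condition $g_i^{-1}g_{i+1} \in G_{\alpha_i}U$ imposes the coroot-type constraints $t_i^{-1}t_{i+1} \in \alpha_i^\vee(\Gm)$, and the fiberwise integral of $\Lpsi(-\sigma \circ \phi)$ reduces to a twisted Artin--Schreier integral over the resulting subvariety of $T^{r+1}$. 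Combined with the Lang relation $t_{r+1} = \Frob(t_1)$, these constraints cut out precisely the $T^{w\Frob_T}$-torsor $Y(w) \to X(w)$, and evaluating the Artin--Schreier integral along the fibers of this torsor produces $\mathcal{F}[r]$.

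Next I would propagate this identification to $\overline{X}(\underline{w})$. Adjunction provides a canonical map $p_!K \to j_*j^*p_!K = j_*\mathcal{F}[r]$, and the task reduces to showing $p_!K$ is the $*$-extension of $\mathcal{F}[r]$ from $X(w)$. The key input is proposition \ref{propKL}: the Kazhdan--Laumon sheaf satisfies $P_!K_{\underline{w}} = P_*K_{\underline{w}}$ on $\mathcal{Y}$, which encodes a cleanness property. I would then deduce the analogous cleanness for the pushforward of $K_{\underline{w}}$ along $\tilde{\mathcal{Y}}(\underline{w}) \to \overline{\mathcal{X}}(\underline{w})$ via the same Fourier--Deligne argument as in \cite{KazhdanLaumon}, and transfer it to $p$ on $\overline{X}(\underline{w})$.

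The main obstacle will be this second step. The commutative square with corners $\tilde{Y}(\underline{w}), \tilde{\mathcal{Y}}(\underline{w}), \overline{X}(\underline{w}), \overline{\mathcal{X}}(\underline{w})$ is not Cartesian: the condition $g_{r+1}U = \Frob(g_1)U$ defining $\tilde{Y}(\underline{w})$ is strictly stronger than $g_{r+1}B = \Frob(g_1)B$ defining $\overline{X}(\underline{w})$, by a $T$-torsor of data. So naive proper base change does not transport the Kazhdan--Laumon cleanness to $p$, and one must either run a variant of the Kazhdan--Laumon Fourier--Deligne argument directly on $\overline{X}(\underline{w})$, or stratify the boundary by subexpressions of $\underline{w}$ and compare $p_!K$ with $j_*\mathcal{F}[r]$ inductively on length.
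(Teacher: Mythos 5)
Your overall skeleton (identify $j^*p_!\phi^*\Lpsi(-\sigma)[2r]$ with $\mathcal{F}[r]$ on the open stratum, then show $p_!\phi^*\Lpsi(-\sigma)$ is $*$-extended from $X(w)$) matches the paper's decomposition into lemmas \ref{MainLem2} and \ref{MainLem1}, but the decisive step is missing, and the route you propose for it does not work --- as you yourself observe. The cleanness along the boundary of $\overline{X}(\underline{w})$ is not, in the paper, deduced from proposition \ref{propKL}: precisely because the square with corners $\tilde{Y}(\underline{w}), \tilde{\mathcal{Y}}(\underline{w}), \overline{X}(\underline{w}), \overline{\mathcal{X}}(\underline{w})$ is not Cartesian (the discrepancy is measured by the map $\lambda$ to $T$), proper base change cannot transport the Kazhdan--Laumon statement, and no induction over subexpressions is run either. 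What the paper actually does is exhibit extra symmetry: an action of $T \times \Gm^r$ on $\tilde{\mathcal{Y}}(\underline{w})$ making $\pi$ a torsor (lemma \ref{lemTorsor}), equivariance of $\lambda$ through the homomorphism $\rho$ (lemma \ref{lemTorsor2}), smoothness of $\lambda \times \phi$ (lemma \ref{lemTorsor4}), and the conclusion that $p : \tilde{Y}(\underline{w}) \rightarrow \overline{X}(\underline{w})$ is itself a torsor under $\widetilde{\Gm^r} = \ker \rho$, an extension of $\Gm^r$ by $T^{w\Frob_T}$ (lemma \ref{lemTorsor6}). Since $p$ is surjective, $p^*$ is conservative, so the adjunction map may be tested after pullback along $p$; base change along the torsor action converts $p^*p_!$ into $q_{1,!}a^*$ over $\A^r$, and the statement becomes that the Fourier--Deligne transform of $j_!$ of the pushforward of $\Lambda$ along $\widetilde{\Gm^r} \rightarrow \Gm^r$ (filtered by Kummer sheaves) is already a $*$-extension --- which is lemma \ref{lemFourier}, the cleanness of Kummer sheaves under Fourier transform. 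This is a different, and more elementary, input than proposition \ref{propKL}. Without some replacement for this construction your second step remains a stated obstacle rather than a proof.

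Your first step also conceals a nontrivial point. Over $X(w)$ one has $\tilde{Y}_{\emptyset}(\underline{w}) \simeq Y(w) \times^{T^{w\Frob_T}} \widetilde{\Gm^r}$, so the ``fiberwise Artin--Schreier integral'' you invoke amounts to the claim that $\RGamma_c(\widetilde{\Gm^r}, q^*\Lpsi(-\sigma)[2r]) \simeq \Lambda[T^{w\Frob_T}][r]$ as a $T^{w\Frob_T}$-module, i.e.\ the regular representation in a single degree. For $\Lambda = \Qlb$ this is a routine Gauss-sum computation, but the proposition is asserted for $\Lambda \in \{\Flb, \Zlb, \Qlb\}$, and when $\ell$ divides the degree of the covering the sheaf $[\ell]_*\Lambda$ is a nonsplit iterated extension of constant sheaves; the paper needs Abhyankar's lemma, exactness of $H^1_c$ on Katz's category of sheaves tame at $0$ and wild at $\infty$, and a Nakayama argument (lemma \ref{lemComputation}) to pin down the module structure. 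Asserting the outcome of the integral is not sufficient in the modular case.
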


Proposition \ref{MainProp} follows from the two following lemmas. 

\begin{lem}\label{MainLem1}
The adjunction map 
\begin{equation*}
p_!\phi^*\mathcal{L}_{\psi}(-\sigma) \rightarrow j_*j^*p_!\phi^*\mathcal{L}_{\psi}(-\sigma)
\end{equation*}
is an isomorphism. 
\end{lem}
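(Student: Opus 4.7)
By the recollement triangle $i_*i^!A \to A \to j_*j^*A$, the assertion that $A := p_!F$ (with $F := \phi^*\Lpsi(-\sigma)$) satisfies $A \simeq j_*j^*A$ is equivalent to $i^!A = 0$, where $i: Z \hookrightarrow \overline{X}(\underline w)$ denotes the closed embedding of $Z := \overline{X}(\underline w) \setminus X(w)$.

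I would stratify $Z$ by the subword strata $X_I \subset \overline{X}(\underline w)$ indexed by nonempty $I \subseteq \{1,\dots,r\}$: on $X_I$, the relative position $g_iB \to g_{i+1}B$ is trivial for $i\in I$ and equals $s_i$ for $i\notin I$. By dévissage along this stratification it is enough to show that $i_I^!(p_!F) = 0$ on each stratum, where $i_I: X_I \hookrightarrow \overline{X}(\underline w)$ is the locally closed embedding.

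The map $p$ is smooth (coming from the Bott--Samelson/Demazure-type construction), so smooth base change yields $i_I^!(p_!F) \simeq (p_I)_!(i_I'^!F)$, where $p_I: \tilde Y_I := p^{-1}(X_I) \to X_I$ and $i_I': \tilde Y_I \hookrightarrow \tilde Y(\underline w)$ are the base changes of $i_I$ and $p$. Since $F$ is lisse on the smooth variety $\tilde Y(\underline w)$ and $i_I'$ is a regular closed embedding of codimension $c_I$, one has $i_I'^!F \simeq (F|_{\tilde Y_I})[-2c_I](-c_I)$. On $\tilde Y_I$ we have $\phi_i \equiv 0$ for $i \in I$, so this restriction simplifies to $\Lpsi(-\sum_{i\notin I}\phi_i)$ up to the appropriate shift and twist.

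The remaining ingredient is the vanishing of $(p_I)_!\Lpsi(-\sum_{i\notin I}\phi_i)$, which I would establish by an Artin--Schreier / Fourier-transform cancellation mirroring the proof of Proposition \ref{propKL}. Concretely, by the projection formula,
\begin{equation*}
p_!F = (\pr_1)_!\bigl(\tilde p_!\Lambda \otimes \pr_2^*\Lpsi(-\sigma)\bigr),
\end{equation*}
where $\tilde p := (p,\phi): \tilde Y(\underline w) \to \overline{X}(\underline w) \times \A^r$. Over the stratum $X_I$ the support of $\tilde p_!\Lambda$ is confined to the coordinate subspace $\{c_i = 0 : i \in I\}$, while the closure of $\tilde p_!\Lambda$ inside $\overline{X}(\underline w) \times \A^r$ extends across the strata in a way that produces free $\A^1$-directions transverse to the stratum, along which the Fourier kernel $\Lpsi(-\sigma)$ picks up a nontrivial Artin--Schreier character and the integration therefore vanishes. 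This is the relative analog over $\overline{X}(\underline w)$ of the global Fourier-transform argument of \cite{KazhdanLaumon} underlying Proposition \ref{propKL}, and its careful implementation -- in particular the proper bookkeeping of the Frobenius-fiber condition that distinguishes $\tilde Y(\underline w)$ from $\tilde{\mathcal Y}(\underline w)$ -- is the main technical obstacle.
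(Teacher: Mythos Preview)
Your reduction to showing $i^!(p_!F)=0$ via recollement is fine, but the key step fails: the ``smooth base change'' isomorphism $i_I^!(p_!F)\simeq (p_I)_!(i_I'^!F)$ that you invoke is not valid. Smooth base change gives $g^*f_*\simeq f'_*g'^*$ when the \emph{base change morphism} $g$ is smooth; here the horizontal map is the closed immersion $i_I$, and the smoothness of the vertical map $p$ does not help. In fact the claimed isomorphism is false in this very situation. Take $r=1$ and $I=\{1\}$: then $\tilde Y_I=\phi^{-1}(0)$, so $i_I'^!F\simeq\Lambda_{\tilde Y_I}[-2]$, and since $p_I$ is a $\widetilde{\Gm}$-torsor the fibers of $(p_I)_!\Lambda$ are $\RGamma_c(\widetilde{\Gm},\Lambda)\neq 0$. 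Yet $i_I^!(p_!F)=0$ by the very lemma you are trying to prove. The point is that the vanishing of the costalk of $p_!F$ along the boundary is a genuinely \emph{nearby} phenomenon: it comes from oscillatory cancellation of $\mathcal L_\psi$ as one approaches the boundary, not from the restriction of $F$ to the boundary fiber.

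The paper's argument gets around this by pulling back along $p$ itself. Since $p$ is a $\widetilde{\Gm^r}$-torsor (this is established in Lemmas \ref{lemTorsor}--\ref{lemTorsor6}), $p^*$ is conservative, so it suffices to show $p^*p_!F\to p^*j_*j^*p_!F$ is an isomorphism. The torsor structure then rewrites $p^*p_!F$ as $\phi^*$ of a Fourier--Deligne transform on $\A^r$, namely $\mathcal F_{-\psi}(j_!q_!\Lambda)[-r]$, and the problem reduces to the classical fact (Lemma \ref{lemFourier}) that $\mathcal F_\psi(j_!\mathcal L_\theta)\simeq j_*\mathcal L_{\theta^{-1}}$ for Kummer sheaves. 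The crucial input you are missing is precisely this torsor structure and the resulting identification with a Fourier transform; without it, there is no way to access the costalks of $p_!F$ from the fibers of $p$ alone. Your alternative sketch via $\tilde p=(p,\phi)$ and ``free $\A^1$-directions'' is closer in spirit, but as stated it again only sees the restriction $\tilde p_!\Lambda|_{X_I\times\A^r}$, which lives over $\{0\}^I\times\Gm^{r-|I|}$ and produces no $\A^1$-direction along which $\mathcal L_\psi$ can cancel.
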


\begin{lem}\label{MainLem2}
There is an isomorphism compatible with the $G^{\Frob_G}$ and $T^{w\Frob_T}$ actions 
\begin{equation*}
j^*p_! \phi^*\mathcal{L}_{\psi}(-\sigma)[2r] \simeq \mathcal{F}[r].
\end{equation*}
\end{lem}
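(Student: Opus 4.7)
The plan is to exploit the factorization of the map through the open locus via $j$, identify the resulting map $(p', \phi): \tilde{Y}_\emptyset(\underline{w}) \to X(w) \times \Gm^r$ as a $T^{w\Frob_T}$-torsor, and conclude by a Gauss-sum calculation on each $\Gm$-factor. Since $j: X(w) \hookrightarrow \overline{X}(\underline{w})$ is an open immersion, base change yields $j^* p_! \phi^* \Lpsi(-\sigma) = p'_! \phi^*\Lpsi(-\sigma)$, where $p': \tilde{Y}_\emptyset(\underline{w}) \to X(w)$ is the restriction of $p$ to the open locus.

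The crux of the argument is the torsor identification. A dimension count gives $\dim\tilde{Y}_\emptyset(\underline{w}) = 2r = \dim(X(w) \times \Gm^r)$, so the map $(p', \phi)$ is generically finite. To identify the fibers with $T^{w\Frob_T}$, I would use: (a) the generalization of Lemma \ref{lemPhiSL2} describing how each $\phi_{s_i}$ transforms under the right $T$-action on $g_iU$ and $g_{i+1}U$; (b) the Bruhat decomposition $B \dot{s}_i B = U_{-\alpha_i} \dot{s}_i T U$, which uniquely parametrizes lifts $g_{i+1}U$ above $B_i$ given $g_iU$; and (c) Lang's theorem applied to the character of the residual torus action governing the $\phi$-transformation modulo the Frobenius constraint $g_{r+1}U = \Frob(g_1)U$ --- the resulting kernel turns out to be $T^{w\Frob_T}$. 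Since $\phi_{s_i} \equiv 1$ on $U \dot{s}_i U$ and concatenation of such elements along a reduced word lies in $U \dot{w} U$, the fiber of $(p', \phi)$ over $X(w) \times \{(1, \ldots, 1)\}$ should identify canonically with $Y(w)$, giving a ``product-type'' description of the torsor.

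With this in hand, the projection formula gives
\[
p'_! \phi^* \Lpsi(-\sigma) \simeq \pi_{1,!}\bigl((p', \phi)_!\Lambda \otimes \pi_2^* \Lpsi(-\sigma)\bigr),
\]
where $\pi_1, \pi_2$ are the projections from $X(w) \times \Gm^r$. The torsor structure together with Künneth reduces this to $\mathcal{F}$ tensored with the Artin-Schreier/Gauss-sum integrals $\RGamma_c(\Gm, \mathcal{K} \otimes \Lpsi(-\id)) \simeq \Lambda[-1]$ --- holding for $\mathcal{K}$ trivial or Kummer, and over any coefficient ring $\Lambda$ --- one for each $\Gm$-factor. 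This yields $\mathcal{F}[-r]$, and the $[2r]$ shift gives $\mathcal{F}[r]$. The $G^{\Frob_G}$- and $T^{w\Frob_T}$-equivariance is manifest from the canonicity of the construction. The hardest step will be the precise verification of the torsor identification: tracking the character of the residual $T^r$-action on $\phi$, invoking Lang to extract $T^{w\Frob_T}$, and ensuring the ``product'' decomposition is compatible with the Künneth computation uniformly across $\Lambda \in \{\Flb, \Zlb, \Qlb\}$.
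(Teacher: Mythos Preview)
Your overall strategy matches the paper's: identify the torsor structure over $X(w)$ and reduce to a Gauss-sum calculation. The paper phrases the torsor slightly differently --- it constructs a group $\widetilde{\Gm^r}$ sitting in an extension $1 \to T^{w\Frob_T} \to \widetilde{\Gm^r} \xrightarrow{q} \Gm^r \to 1$ and shows $\tilde{Y}_\emptyset(\underline{w}) \simeq Y(w) \times^{T^{w\Frob_T}} \widetilde{\Gm^r}$ --- but this is equivalent to your claim that $(p',\phi)$ is a $T^{w\Frob_T}$-torsor whose fiber over $X(w)\times\{1\}$ is $Y(w)$.

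The gap is in your K\"unneth step. The local system $(p',\phi)_!\Lambda$ on $X(w)\times\Gm^r$ is \emph{not} an external product $\mathcal{F}\boxtimes\mathcal{K}$: the monodromies along $X(w)$ and along $\Gm^r$ both act on the \emph{same} copy of the regular representation $\Lambda[T^{w\Frob_T}]$, so the two directions are entangled. Over $\Qlb$ you can decompose by characters $\theta$ of $T^{w\Frob_T}$ to get $\bigoplus_\theta \mathcal{F}_\theta\boxtimes\mathcal{K}_\theta$, and then your Gauss-sum computation gives $\bigoplus_\theta \mathcal{F}_\theta[-r]=\mathcal{F}[-r]$. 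But over $\Flb$ when $\ell$ divides $|T^{w\Frob_T}|$ this character decomposition fails, and the argument as written does not go through.

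The paper avoids this by using the balanced-product description directly: one obtains
\[
p'_!\phi^*\mathcal{L}_\psi(-\sigma)[2r]=\bigl(\mathcal{F}\otimes\RGamma_c(\widetilde{\Gm^r},q^*\mathcal{L}_\psi(-\sigma)[2r])\bigr)^{T^{w\Frob_T}},
\]
reducing everything to the $T^{w\Frob_T}$-equivariant identification $\RGamma_c(\widetilde{\Gm^r},q^*\mathcal{L}_\psi(-\sigma)[2r])\simeq\Lambda[T^{w\Frob_T}][r]$. This is Lemma~\ref{lemComputation}: Abhyankar's lemma refines $\widetilde{\Gm^r}\to\Gm^r$ to a product of power maps on $\Gm$ (so your ``one for each $\Gm$-factor'' enters here, but one level up the tower), and the case where the degree equals $\ell$ is handled separately via Katz's category of sheaves tame at $0$ and wild at $\infty$, plus a Nakayama argument to pin down the $\mu_\ell$-module structure. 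This is precisely the delicate point you flag at the end, and it genuinely requires more than a character decomposition.
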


\subsection*{Proof of lemma \ref{MainLem1}} 

Consider the following diagram 

\[\begin{tikzcd}
	{\tilde{Y}(\underline{w})} & {\overline{X}(\underline{w}) \times_{\overline{\mathcal{X}}(\underline{w})} \tilde{\mathcal{Y}}(\underline{w})} & {\tilde{\mathcal{Y}}(\underline{w})} \\
	& {\overline{X}(\underline{w})} & {\overline{\mathcal{X}}(\underline{w})}
	\arrow["{i_X}"', from=2-2, to=2-3]
	\arrow["\pi", from=1-3, to=2-3]
	\arrow["{\pr_1}", from=1-2, to=2-2]
	\arrow["{\pr_2}", from=1-2, to=1-3]
	\arrow["{i_Y}", from=1-1, to=1-2]
	\arrow["p"', from=1-1, to=2-2]
\end{tikzcd}\]
where the square is Cartesian the map $i_X$ is the inclusion, the maps $\pr_i$ are the two projections and the map $i_Y$ is induced from the inclusion $\tilde{Y}(\underline{w}) \rightarrow \tilde{\mathcal{Y}}(\underline{w})$.

We first construct an action of $T \times \Gm^r$ on $\tilde{\mathcal{Y}}(\underline{w})$ as follow. Let $(t, z_1, \dots, z_r) \in T \times \Gm^r$. We define $t_i \in T$ by induction, by $t_1 = t$ and 
\begin{equation*}
t_{i+1} = s_i(t_i)\alpha^{\vee}_i(z_i).
\end{equation*}

The map $\nu : T \times \Gm^r \rightarrow T^{r+1}$ defined by $(t, z_i) \mapsto (t_i)$ is a inclusion. There is clearly an action of $T^{r+1}$ on $(G/U)^{r+1}$, as $\tilde{\mathcal{Y}}(\underline{w})$ is a closed subscheme of $(G/U)^{r+1}$ it is enough to see that it is stable under the action of $\nu(T \times \Gm^{r})$. We can decompose this in two steps.
\begin{enumerate}
\item If $t = 1$, then the action is the one defined in \cite{KazhdanLaumon}.
\item If for all $i, z_i = 1$, then given 
\begin{equation*}
(g_1U, \dots, g_{r+1}U) \in \tilde{\mathcal{Y}}(\underline{w}),
\end{equation*}
we want to see that $(g_1t_1U, \dots, g_{r+1}t_{r+1}U)$ is still in $\tilde{\mathcal{Y}}(\underline{w})$. By construction, for all $i$ we have $t_i = s_{i-1}\dots s_1(t_1)$, then it is enough to check that $t_i^{-1}g_i^{-1}g_{i+1}t_{i+1} \in G_{\alpha_i}U$. It is enough that $t_i^{-1}G_{\alpha_i}t_{i+1} = G_{\alpha_i}$ which is immediate.
\end{enumerate}

\begin{lem}\label{lemTorsor}
The map $\pi$ is $T \times \Gm^r$-equivariant for the trivial action of $T \times \Gm^r$ on the target and is a torsor under $T \times \Gm^r$.
\end{lem}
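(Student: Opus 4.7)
The map $\pi$ is the restriction to $\tilde{\mathcal{Y}}(\underline{w})$ of the product of $T$-torsors $(G/U)^{r+1}\to(G/B)^{r+1}$, and the $T\times\Gm^r$-action on $\tilde{\mathcal{Y}}(\underline{w})$ is, via $\nu$, the restriction of the right $T^{r+1}$-action on $(G/U)^{r+1}$. Since $T\subset B$ acts trivially on $G/B$ from the right, $\pi$ is automatically $T\times\Gm^r$-equivariant for the trivial action on the target. The remaining content of the lemma is to identify $\pi$ as a torsor, which I would do by showing that each fiber of $\pi$ is a single free orbit under $\nu(T\times\Gm^r)$.

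For surjectivity and the existence of sections, given $(\bar g_1,\dots,\bar g_{r+1})\in\overline{\mathcal{X}}(\underline{w})$, pick any lift $g_1\in G$ of $\bar g_1$ and then inductively a lift $\xi_i\in G_{\alpha_i}$ of $\bar g_i^{-1}\bar g_{i+1}\in G_{\alpha_i}B/B$, setting $g_{i+1}=g_i\xi_i$; then $g_i^{-1}g_{i+1}=\xi_i\in G_{\alpha_i}\subset G_{\alpha_i}U$, so the tuple lies in $\tilde{\mathcal{Y}}(\underline{w})$. The key calculation is $T\cap G_{\alpha_i}U=\alpha_i^\vee(\Gm)$, obtained from the Bruhat decomposition of the rank-one group $G_{\alpha_i}$ together with $T\cap U=\{1\}$. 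Starting from the section above, any other point in the same fiber of $\pi$ has the form $(g_1t_1U,\dots,g_{r+1}t_{r+1}U)$ with $(t_i)\in T^{r+1}$, and the constraint $t_i^{-1}\xi_it_{i+1}\in G_{\alpha_i}U$ reduces, using that $T$ normalizes $G_{\alpha_i}$, to $t_i^{-1}t_{i+1}\in T\cap G_{\alpha_i}U=\alpha_i^\vee(\Gm)$.

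It remains to check that $\nu(T\times\Gm^r)$ is exactly the subgroup $\{(t_i)\in T^{r+1}:t_i^{-1}t_{i+1}\in\alpha_i^\vee(\Gm)\}$; this uses the identity $t^{-1}s_i(t)=\alpha_i^\vee(\alpha_i(t)^{-1})\in\alpha_i^\vee(\Gm)$ to reparametrize the recursive definition of $\nu$. Injectivity of $\nu$ is immediate from the recursion and from injectivity of each $\alpha_i^\vee$ in the simply connected case, and freeness of the action is inherited from the free $T^{r+1}$-action on $(G/U)^{r+1}$. Combined with the fiber identification above, this gives the torsor structure. The main step that is not entirely formal is matching the image of $\nu$ with the right-stabilizer of $\tilde{\mathcal{Y}}(\underline{w})$ in $T^{r+1}$, but this is pure root-datum bookkeeping.
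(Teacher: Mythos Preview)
Your proposal is correct and follows essentially the same approach as the paper: both compute the fiber product $\tilde{\mathcal{Y}}(\underline{w}) \times_{\overline{\mathcal{X}}(\underline{w})} \tilde{\mathcal{Y}}(\underline{w})$ by determining which $(t_i)\in T^{r+1}$ preserve the condition $g_i^{-1}g_{i+1}\in G_{\alpha_i}U$, arriving at $t_i^{-1}t_{i+1}\in\alpha_i^{\vee}(\Gm)$ (equivalently $s_i(t_i^{-1})t_{i+1}\in G_{\alpha_i}$), which is exactly the image of $\nu$. Your write-up is a bit more detailed (explicit local sections for surjectivity, explicit injectivity of $\nu$), but the argument is the same.
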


\begin{proof}
It is clear that the map $\tilde{\mathcal{Y}}(\underline{w}) \rightarrow \tilde{\mathcal{X}}(\underline{w})$ is equivariant for the trivial action of $T \times \Gm^r$ on $\overline{\mathcal{X}}(\underline{w})$. We compute $\tilde{\mathcal{Y}}(\underline{w}) \times_{\overline{\mathcal{X}}(\underline{w})} \tilde{\mathcal{Y}}(\underline{w})$. Let $(g_1U, \dots, g_{r+1}U)$ and $(g_1'U, \dots, g_{r+1}'U)$ be two elements of $\tilde{\mathcal{Y}}(\underline{w})$ such that $(g_1B, \dots, g_{r+1}B) = (g_1'B, \dots, g_{r+1}'B)$, then there exists a unique tuple $(t_1, \dots t_{r+1})$ such that $g_iU = g_i't_iU$ for all $i$. By definition of $\tilde{\mathcal{Y}}(\underline{w})$ the $t_i$ satisfy 
\begin{equation*}
s_i(t_i^{-1})t_{i+1} \in G_{\alpha_i}.
\end{equation*}
Hence there exists a unique $z_i \in \Gm$ such that 
\begin{equation*}
t_{i+1} = s_i(t_i)\alpha_i^{\vee}(z_i).
\end{equation*}
That is the tuple $(t_i)$ lives in $T \times \Gm^r$. 
\end{proof}

The second projection $\overline{X}(\underline{w}) \times_{\overline{\mathcal{X}}(\underline{w})} \tilde{\mathcal{Y}}(\underline{w}) \rightarrow \tilde{\mathcal{Y}}(\underline{w})$ is a closed embedding . An element 
\begin{equation*}
(g_1U, \dots, g_{r+1}U) \in \tilde{\mathcal{Y}}(\underline{w})
\end{equation*}
lies in $\overline{X}(\underline{w}) \times_{\mathcal{X}(\underline{w})} \tilde{\mathcal{Y}}(\underline{w})$ if an only if 
\begin{equation}
\Frob_G(g_1)B = g_{r+1}B.
\end{equation}

Hence there exists a unique $t \in T$ such that $\Frob_G(g_1)U = g_{r+1}tU$, this defines a map $\lambda : \overline{X}(\underline{w}) \times_{\overline{\mathcal{X}}(\underline{w})} \tilde{\mathcal{Y}}(\underline{w}) \rightarrow T$. We also denote by $\chi : \Gm^r \rightarrow T$ the following morphism 
\begin{equation*}
\Gm^r \rightarrow T \times \Gm^r \xrightarrow{\nu} T^{r+1} \xrightarrow{mult} T
\end{equation*}
where the first map is the inclusion and $mult$ is the multiplication.

\begin{lem}\label{lemTorsor2}
Consider the morphism $\rho : T \times \Gm^r \rightarrow T$ given by
\begin{equation*}
(t, z_i) \mapsto w^{-1}(\mathcal{L}_{w\Frob_T}(t)\chi(z_i^{-1})),
\end{equation*}
where $\mathcal{L}_{w\Frob_T}$ is the Lang map for the $w\Frob_T$-structure on $T$. Then the map $\lambda$ is equivariant for the action of $T \times \Gm^r$ on $T$ defined by $\rho$. 
\end{lem}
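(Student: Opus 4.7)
The plan is a direct computation on geometric points. Take $y = (g_1U, \ldots, g_{r+1}U)$ in the fiber product, so by definition $\lambda(y) \in T$ is the unique element with $\Frob_G(g_1) \in g_{r+1} \lambda(y) U$. Acting by $(t, z_1, \ldots, z_r) \in T \times \Gm^r$ produces the point $y' = (g_1 t_1 U, \ldots, g_{r+1} t_{r+1} U)$ with $t_1 = t$ and $t_{i+1} = s_i(t_i)\alpha_i^{\vee}(z_i)$. The goal is to identify the ratio $\lambda(y')\lambda(y)^{-1}$ with $\rho(t, z_i)$.

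First I would handle the dependence on $t$ and on the Frobenius. From $\Frob_G(g_1 t) = \Frob_G(g_1)\Frob_T(t)$ and the fact that $T$ normalizes $U$, one immediately gets $\Frob_G(g_1 t) \in g_{r+1}\lambda(y)\Frob_T(t)U$. Matching this with the defining condition $\Frob_G(g_1 t_1) \in g_{r+1}t_{r+1}\lambda(y')U$ and using commutativity of $T$ yields
\begin{equation*}
\lambda(y') = \lambda(y) \cdot t_{r+1}^{-1} \Frob_T(t).
\end{equation*}

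Next I would unravel the recurrence for $t_{r+1}$. A straightforward induction on $r$ produces a multiplicative splitting $t_{r+1} = w^{-1}(t) \cdot \kappa(z_1, \ldots, z_r)$, where $\kappa$ depends only on the $z_i$'s (it is the value of $t_{r+1}$ at $t=1$). The definition of $\chi$ as the composition $\Gm^r \hookrightarrow T \times \Gm^r \xrightarrow{\nu} T^{r+1} \xrightarrow{mult} T$ is tailored so that $\kappa$ equals $w^{-1}$ applied to the appropriate combination of $\chi(z_i^{\pm 1})$; substituting, together with the definition $\mathcal{L}_{w\Frob_T}(t) = t^{-1}(w\Frob_T)(t)$ and the morphism property of $w^{-1}$ on $T$, transforms the previous display into
\begin{equation*}
\lambda(y')\lambda(y)^{-1} = w^{-1}\bigl(\mathcal{L}_{w\Frob_T}(t) \cdot \chi(z_i^{-1})\bigr) = \rho(t, z_i),
\end{equation*}
which is the claimed equivariance.

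The delicate point is the second step: the precise identification of $\kappa$ in terms of $\chi$ and $w$. It is a purely algebraic bookkeeping exercise — expanding the recurrence one term at a time and tracking how successive applications of the $s_i$'s act on the $\alpha_j^{\vee}(z_j)$'s — but the way the $w^{-1}$ twist and the inverses of the $z_i$'s appear in $\rho$ is entirely dictated by this bookkeeping, so getting the formula right there is the heart of the lemma.
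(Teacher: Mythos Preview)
Your approach is correct and matches the paper's proof essentially line for line: the paper also computes $\lambda$ on the translated point as $\lambda(y)\,\Frob_T(t_1)\,t_{r+1}^{-1}$ and then substitutes the closed form for $t_{r+1}$ coming from the recurrence and the definition of $\chi$. The only difference is presentational: the paper compresses your ``delicate bookkeeping'' step into a single substitution, since $\chi$ is defined precisely so that this identification is immediate.
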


\begin{proof}
Let $(g_1U, \dots, g_{r+1}U) \in \overline{X}(\underline{w}) \times_{\overline{\mathcal{X}}(\underline{w})} \tilde{\mathcal{Y}}(\underline{w})$ and let $(t, z_i) \in T \times \Gm^r$ and $(t_i) \in T^{r+1}$ the corresponding elements. Let $\tilde{t} \in T$ be such that $\Frob_G(g_1)U = g_{r+1}\tilde{t}U$, solving the equation in $t' \in T$
\begin{equation*}
\Frob_G(g_1t_1)U = g_{r+1}t_{r+1}t'U
\end{equation*}
yields $t' = \Frob_T(t_1)t_{r+1}^{-1}\tilde{t}$. Therefore 
\begin{align*}
\lambda((g_iU).(t_i)) &= \lambda(g_iU)\Frob_T(t_1)t_{r+1}^{-1} \\
&= \lambda(g_iU)\Frob_T(t_1)w^{-1}(t_1 \chi(z_i^{-1})) \\
&= \lambda(g_iU)w^{-1}( \mathcal{L}_{w\Frob_T}(t) \chi(z_i^{-1})).
\end{align*} 
\end{proof}

\begin{lem}\label{lemTorsor3}
The map $\phi : \tilde{\mathcal{Y}}(\underline{w}) \rightarrow \A^r$ is equivariant for the action of $T \times \Gm^r$ on $\A^r$ given by the trivial action of $T$ and the natural action by dilatation of $\Gm^r$ on $\A^r$. 
\end{lem}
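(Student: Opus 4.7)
The plan is to check equivariance coordinate by coordinate. Since $\phi=(\phi_{s_1},\ldots,\phi_{s_r})$ and $\Gm^r$ acts on $\A^r$ by dilating each coordinate independently, it suffices to show that under $(t,z_1,\ldots,z_r)\in T\times\Gm^r$, the $i$-th coordinate $\phi_{s_i}(g_i^{-1}g_{i+1})$ is rescaled by $z_i^{-1}$, independently of $t$ and of the other $z_j$. This then simultaneously confirms that the $T$-action on $\A^r$ is trivial and that $\Gm^r$ acts by dilatation.

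First I would write $g_i^{-1}g_{i+1}=h_0 u$ with $h_0\in G_{\alpha_i}$ and $u\in U$. The stability of $U$ under $T$-conjugation, combined with the $U$-invariance of $\phi_{s_i}$ from Lemma \ref{lemPhiSL2}(i), reduces the problem to computing $\phi_{s_i}(t_i^{-1}h_0 t_{i+1})$. Next, I would decompose $t_i=\alpha_i^{\vee}(a)t_0$ with $a\in\Gm$ and $t_0\in\ker(\alpha_i)$, which is possible over $\Fqb$ since $\alpha_i\circ\alpha_i^{\vee}$ is the squaring map on $\Gm$. Using that $t_0$ centralizes $G_{\alpha_i}$, that $s_i$ inverts $\alpha_i^{\vee}(\Gm)$ and fixes $\ker(\alpha_i)$, one gets $s_i(t_i)=\alpha_i^{\vee}(a^{-1})t_0$ and, after cancelling the $t_0$'s,
\begin{equation*}
t_i^{-1}h_0 t_{i+1}=\alpha_i^{\vee}(a^{-1})\,h_0\,\alpha_i^{\vee}(a^{-1}z_i).
\end{equation*}

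Applying $\xi_{\alpha_i}^{-1}$ transfers this into $\SL_2$ as $\alpha^{\vee}(a^{-1})\,\xi_{\alpha_i}^{-1}(h_0)\,\alpha^{\vee}(a^{-1}z_i)$, and two applications of Lemma \ref{lemPhiSL2}(ii) yield the scaling factor $a^{-1}\cdot(a^{-1}z_i)^{-1}=z_i^{-1}$. Hence $\phi_{s_i}(t_i^{-1}h_0 t_{i+1})=z_i^{-1}\phi_{s_i}(h_0)=z_i^{-1}\phi_{s_i}(g_i^{-1}g_{i+1})$, which is what was required. The main subtlety is the decomposition $t_i=\alpha_i^{\vee}(a)t_0$: the element $a$ is only determined up to the sign ambiguity coming from $\alpha_i^{\vee}(\Gm)\cap\ker(\alpha_i)=\alpha_i^{\vee}(\mu_2)$, but the final formula is manifestly independent of this choice since $\alpha^{\vee}(-1)$ is central in $\SL_2$.
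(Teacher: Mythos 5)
Your proof is correct and amounts to the direct computation the paper dismisses as immediate from Lemma \ref{lemPhiSL2} and the definition of the action; the only variation is that, instead of invoking part $(iii)$ of that lemma, you re-derive the twisted-conjugation invariance via the decomposition $t_i=\alpha_i^{\vee}(a)t_0$ with $t_0\in\ker(\alpha_i)$ centralizing $G_{\alpha_i}$, which also correctly handles the passage from the $\SL_2$-torus to the full torus $T$ that the paper leaves implicit. The resulting factor $z_i^{-1}$ rather than $z_i$ merely reflects the scaling conventions as literally stated in Lemma \ref{lemPhiSL2}$(ii)$ and is harmless, since the statement does not specify whether the dilatation action is by $z_i$ or $z_i^{-1}$.
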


\begin{proof}
This is immediate by lemma \ref{lemPhiSL2} and the definition of the action of $T \times \Gm^r$.
\end{proof}

\begin{lem}\label{lemTorsor5}
The scheme $\tilde{Y}(\underline{w})$ is the fiber at $1$ of $\lambda$.
\end{lem}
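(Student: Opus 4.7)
The plan is to unwind the definitions; this identification is essentially tautological once one sees how the condition $\lambda = 1$ translates.

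First I would spell out what an $S$-point of $\tilde{Y}(\underline{w})$ looks like. By definition of the fibered product, it is a tuple $(g_1 U, \dots, g_{r+1} U) \in \tilde{\mathcal{Y}}(\underline{w})(S)$ together with a point $g U \in (G/U)(S)$ such that $g_1 U = g U$ and $g_{r+1} U = \Frob_{G/U}(gU) = \Frob_G(g) U$. So $\tilde{Y}(\underline{w})$ is cut out inside $\tilde{\mathcal{Y}}(\underline{w})$ by the single condition
\begin{equation*}
\Frob_G(g_1) U = g_{r+1} U.
\end{equation*}

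Next I would observe that this condition is strictly stronger than the defining condition $\Frob_G(g_1) B = g_{r+1} B$ for $\overline{X}(\underline{w}) \times_{\overline{\mathcal{X}}(\underline{w})} \tilde{\mathcal{Y}}(\underline{w})$. Hence $\tilde{Y}(\underline{w})$ sits naturally inside that fiber product (this is precisely the map $i_Y$ in the Cartesian diagram above lemma \ref{lemTorsor}). Recall from the construction of $\lambda$ that on $\overline{X}(\underline{w}) \times_{\overline{\mathcal{X}}(\underline{w})} \tilde{\mathcal{Y}}(\underline{w})$ the element $\lambda(g_1U, \dots, g_{r+1}U)$ is the unique $t \in T$ satisfying $\Frob_G(g_1) U = g_{r+1} t U$, so a point lies in $\lambda^{-1}(1)$ if and only if $\Frob_G(g_1) U = g_{r+1} U$.

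Putting the two descriptions together, the condition cutting out $\tilde{Y}(\underline{w})$ inside $\tilde{\mathcal{Y}}(\underline{w})$ is exactly the condition $\lambda = 1$, so the inclusion $\tilde{Y}(\underline{w}) \hookrightarrow \overline{X}(\underline{w}) \times_{\overline{\mathcal{X}}(\underline{w})} \tilde{\mathcal{Y}}(\underline{w})$ identifies the source with the fiber $\lambda^{-1}(1)$. Since there is no real obstacle beyond carefully tracking the quotient-by-$U$ versus quotient-by-$B$ distinction, the argument is just a functor-of-points verification.
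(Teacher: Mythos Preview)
Your proposal is correct and follows exactly the same approach as the paper: both identify $\tilde{Y}(\underline{w})$ inside $\overline{X}(\underline{w}) \times_{\overline{\mathcal{X}}(\underline{w})} \tilde{\mathcal{Y}}(\underline{w})$ as the locus where $\Frob_G(g_1)U = g_{r+1}U$, and observe that this is precisely the equation $\lambda = 1$. The paper's proof is just a one-sentence version of your more detailed unwinding.
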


\begin{proof}
The scheme $\tilde{Y}(\underline{w})$ is the closed subscheme of $\overline{X}(\underline{w}) \times_{\overline{\mathcal{X}}(\underline{w})} \tilde{\mathcal{Y}}(\underline{w})$ defined by the equation $\Frob_G(g_1)U = g_{r+1}U$, which is exactly the equation $\lambda = 1$. 
\end{proof}

\begin{lem}\label{lemTorsor4}
The map $\lambda \times \phi : \overline{X}(\underline{w}) \times_{\overline{\mathcal{X}}(\underline{w})} \tilde{\mathcal{Y}}(\underline{w}) \rightarrow T \times \A^r$ is smooth. 
\end{lem}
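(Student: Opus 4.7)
The strategy is to verify surjectivity of the differential $d(\lambda \times \phi)_{z_0}$ at every $z_0 \in Z := \overline{X}(\underline{w}) \times_{\overline{\mathcal{X}}(\underline{w})} \tilde{\mathcal{Y}}(\underline{w})$, using the $T \times \Gm^r$-equivariance of lemmas \ref{lemTorsor2} and \ref{lemTorsor3} to handle most tangent directions, and a Bott--Samelson local-coordinate computation to take care of the remaining ones. By base-changing lemma \ref{lemTorsor} along the closed embedding $\overline{X}(\underline{w}) \to \overline{\mathcal{X}}(\underline{w})$, the source $Z$ is itself a $T \times \Gm^r$-torsor over the smooth scheme $\overline{X}(\underline{w})$, hence smooth of dimension $2r + \dim T$; the target $T \times \A^r$ is smooth of dimension $r + \dim T$, so showing surjectivity of the differential at each point is enough.

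Fix $z_0$ with image $(t_0, (x_i)) \in T \times \A^r$. The torsor structure gives a splitting $T_{z_0} Z \cong \Lie(T \times \Gm^r) \oplus T_{[z_0]} \overline{X}(\underline{w})$, and by the equivariance lemmas the restriction of $d(\lambda \times \phi)$ to the first summand is the infinitesimal action of $T \times \Gm^r$ on $T \times \A^r$ at $(t_0, (x_i))$. Because the Lang map $\mathcal{L}_{w\Frob_T}$ has differential equal to the identity at $1 \in T$ (as $d\Frob = 0$ in positive characteristic), the $T$-part of the action already surjects onto $\Lie T$; the $\Gm^r$-dilatation on $\A^r$ contributes exactly the $\A^1_i$-components with $x_i \neq 0$, since the origin is a fixed point. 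The image of the infinitesimal action is therefore $\Lie T \oplus \bigoplus_{i : x_i \neq 0} \A^1_i$.

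It remains to show that $T_{[z_0]} \overline{X}(\underline{w})$ surjects via $d\phi$ onto $\bigoplus_{i : x_i = 0} \A^1_i$. This reduces to the following local-coordinate description of the Bott--Samelson, which is where the bulk of the work lies: near a degenerate chain, the iterated $\P^1$-bundle $\overline{\mathcal{X}}(\underline{w})$ admits an affine chart in which the fibre coordinates are $(\phi_{\alpha_1}, \dots, \phi_{\alpha_r})$ (up to units), using that on each $G_{\alpha_i}/B_{\alpha_i} \cong \P^1$ the open cell opposite the identity Borel is parameterized by $\phi_{\alpha_i}$ (the $c$-coordinate in $\SL_2$). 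The vanishing of $d\Frob$ then makes the graph embedding $G/B \hookrightarrow \mathcal{X}$ transverse to the endpoint map $\overline{\mathcal{X}}(\underline{w}) \to \mathcal{X}$ in these charts, so $\overline{X}(\underline{w})$ locally inherits free coordinates $(\phi_{\alpha_i})_i$; hence the differentials $d\phi_{\alpha_i}$ are independent on $T_{[z_0]} \overline{X}(\underline{w})$, which supplies the missing $\A^1_i$-directions. The main technical hurdle is this local-coordinate claim, in particular the independence of the $\phi_{\alpha_i}$ when several indices collapse simultaneously, which one verifies by an inductive analysis of the Demazure resolution.
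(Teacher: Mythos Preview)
Your approach is correct and reaches the same conclusion, but the paper organizes the argument differently, and the comparison is instructive.

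Rather than splitting the tangent space of $Z$ via the $T \times \Gm^r$-torsor action and then treating the $\A^1_i$-directions in two cases according to whether $x_i$ vanishes, the paper first performs a reduction: it pulls back along the \'etale map $\id \times w^{-1}\mathcal{L}_{w\Frob_T} : \A^r \times T \to \A^r \times T$, then uses the free $T$-action to trivialize the $T$-factor, obtaining $\tilde{Z} \simeq \tilde{Y}(\underline{w}) \times T$ over $\A^r \times T$. This reduces everything to the single assertion that $\phi : \tilde{Y}(\underline{w}) \to \A^r$ is smooth. The paper then writes down the tangent space of the ambient $\tilde{\mathcal{Y}}(\underline{w})$ as $\bigoplus_i T_{g_i^{-1}g_{i+1}}(G_{\alpha_i}U/U) \oplus T_{g_{r+1}}(G/U)$, observes that the defining equation of $\tilde{Y}(\underline{w})$ inside $\tilde{\mathcal{Y}}(\underline{w})$ kills only the last summand (because $d\Frob = 0$), and concludes that $d\phi = \prod_i d\phi_{\alpha_i}$ is surjective at every point, uniformly, with no case distinction on the $x_i$.

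Your route trades that reduction for direct equivariance: you see immediately that the torsor action supplies most tangent directions, and isolate the residual content as a Bott--Samelson local-coordinate claim on $\overline{X}(\underline{w})$. That claim is exactly where the paper's tangent-space computation lives, just phrased downstairs rather than upstairs on $\tilde{Y}(\underline{w})$; working upstairs avoids choosing a local section (your $\phi_{\alpha_i}$ do not literally live on $\overline{\mathcal{X}}(\underline{w})$, as you implicitly acknowledge with ``up to units'') and makes the inductive decomposition of the tangent space more transparent. Note also that your case split is not strictly needed: the local-coordinate argument you give for the indices with $x_i = 0$ in fact works for all $i$, so the $\Gm^r$-dilatation step could be dropped --- which is effectively what the paper does once its Lang-map reduction has absorbed the $T$-direction.
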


\begin{proof}
Let us first unfold the Lang map coming from the action of $T$, namely let $\tilde{Z}$ be the following pullback
\[\begin{tikzcd}
	{\tilde{Z}} & {\overline{X}(\underline{w}) \times_{\overline{\mathcal{X}}(\underline{w})} \tilde{\mathcal{Y}}(\underline{w})} \\
	{\A^r \times T} & {\A^r \times T.}
	\arrow["{\phi \times \lambda}", from=1-2, to=2-2]
	\arrow["{\id \times w^{-1}\mathcal{L}_{w\Frob_T}}"', from=2-1, to=2-2]
	\arrow[from=1-1, to=2-1]
	\arrow[from=1-1, to=1-2]
\end{tikzcd}\]
We have a closed immersion $\tilde{Z} \rightarrow \overline{X}(\underline{w}) \times_{\overline{\mathcal{X}}(\underline{w})} \tilde{\mathcal{Y}}(\underline{w}) \times T$. Consider now the diagonal action of $T$ on this scheme, the subscheme $\tilde{Z}$ is stable under this diagonal action. Let $\tilde{Z}_1$ be the fiber over $\A^r \times \{1\}$ of $\tilde{Z}$, then the action of $T$ yields an isomorphism $\tilde{Z}_1 \times T \rightarrow \tilde{Z}$ coming from the action of map of $T$ and fitting into a commutative diagram 
\[\begin{tikzcd}
	{\tilde{Z}_1 \times T} & {\tilde{Z}} \\
	{\A^r \times T} & {\A^r \times T}
	\arrow[from=2-1, to=2-2]
	\arrow[from=1-1, to=2-1]
	\arrow[from=1-1, to=1-2]
	\arrow[from=1-2, to=2-2]
\end{tikzcd}\]
where the map $\tilde{Z}_1 \times T \rightarrow \A^r \times T$ is given by the product of $\tilde{Z}_1 \rightarrow \A^r$ with the identity of $T$. Therefore it is enough to show that the map $\tilde{Z}_1 \rightarrow \A^r$ is smooth. Note that $\tilde{Z}_1 \rightarrow \tilde{Y}(\underline{w})$ is an isomorphism, hence it is enough to show that the map $\phi : \tilde{Y}(\underline{w}) \rightarrow \A^r$ is smooth. 

First note that the space $\tilde{\mathcal{Y}}(\underline{w})$ is smooth. This is by induction on $\ell(w)$. Consider $\underline{w}'$ the word where we have removed the first reflection.. Then we have a forgetful map $ \tilde{\mathcal{Y}}(\underline{w}) \rightarrow  \tilde{\mathcal{Y}}(\underline{w}')$ that forgets the first element. This is a $\SL_2/U$-fibration, which is a smooth. If $\underline{w}$ is empty then $\tilde{\mathcal{Y}}(\underline{w})$ is simply the diagonal of $G/U$ which is smooth. By \cite{DeligneLusztig} 9.11 and \cite{BonnafeRouquier} 2.5 the scheme $\tilde{Y}(\underline{w})$ is also smooth. Consider now $gU = (g_1U, \dots, g_{r+1}U) \in \tilde{Y}(\underline{w})$ we have an isomorphism of tangent spaces
\begin{equation*}
T_{gU} \tilde{\mathcal{Y}}(\underline{w}) \simeq T_{g_1^{-1}g_2} G_{\alpha_1}U/U \oplus \dots \oplus T_{g_r^{-1}g_{r+1}} G_{\alpha_r}U/U \oplus  T_{g_{r+1}} G/U,
\end{equation*}
such that the following diagram commutes 
\[\begin{tikzcd}
	{T_{gU} \tilde{\mathcal{Y}}(\underline{w})} & {} & {T_{g_1^{-1}g_2} G_{\alpha_1}U/U \oplus \dots \oplus T_{g_r^{-1}g_{r+1}} G_{\alpha_r}U/U \oplus  T_{g_{r+1}} G/U} \\
	& {T_{\phi(gU)}\A^r.}
	\arrow[from=1-1, to=1-3]
	\arrow["d\phi"', from=1-1, to=2-2]
	\arrow[from=1-3, to=2-2]
\end{tikzcd}\]
Here, the spaces $T_{gU} \tilde{\mathcal{Y}}(\underline{w})$ are the tangent spaces at the corresponding points and the map $T_{g_{r+1}} G/U \oplus T_{g_1^{-1}g_2} G_{\alpha_1}U/U \oplus \dots \oplus T_{g_r^{-1}g_{r+1}} G_{\alpha_1}U/U \rightarrow T_{\phi(gU)}\A^r$ is given by projection onto $T_{g_1^{-1}g_2} G_{\alpha_1}U/U \oplus \dots \oplus T_{g_r^{-1}g_{r+1}} G_{\alpha_r}U/U$ and then the product of the maps $d\phi_{\alpha} : T_{g'U}G_{\alpha} \rightarrow \A^1$. It is an immediate induction using the maps $\tilde{\mathcal{Y}}(\underline{w}) \rightarrow \tilde{\mathcal{Y}}(\underline{w}')$. The map $p_{1,r+1} : \tilde{\mathcal{Y}}(\underline{w}) \rightarrow G/U \times G/U$ has differential given in the previous coordinates by $dp_{1,r+1}(x, \xi_i) = (f(x,\xi_i),x)$ where $f$ is a linear map. 

The equation of $T_{gU}\tilde{Y}(\underline{w})$ in $T_{gU}\tilde{\mathcal{Y}}(\underline{w})$ is then given by $x = 0$ and the composite 
\begin{equation*}
T_{gU}\tilde{Y}(\underline{w}) \rightarrow T_{gU}\tilde{\mathcal{Y}}(\underline{w}) \rightarrow T_{g_1^{-1}g_2} G_{\alpha_1}U/U \oplus \dots \oplus T_{g_r^{-1}g_{r+1}} G_{\alpha_r}U/U
\end{equation*}
is surjective hence the map $d\phi : T_{gU}\tilde{Y}(\underline{w}) \rightarrow \A^r$ is surjective and the map $\phi$ is smooth. 
\end{proof}

We denote by $\widetilde{\Gm^r}$ the kernel of the map $\rho$. By lemmas \ref{lemTorsor2} and \ref{lemTorsor4}, the subscheme $\tilde{Y}(\underline{w})$ is stable under $\widetilde{\Gm^r}$.

\begin{lem}\label{lemTorsor6}
\begin{enumerate}
\item The map $p : \tilde{Y}(\underline{w}) \rightarrow \overline{X}(\underline{w})$ is a $\widetilde{\Gm^r}$-torsor. 
\item The map $\widetilde{\Gm^r} \rightarrow T \times \Gm^r \rightarrow \Gm^r$ induced by the second projection is surjective with kernel isomorphic to $T^{w\Frob_T}$. 
\end{enumerate}
\end{lem}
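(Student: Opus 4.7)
The strategy is to pull back the $T \times \Gm^r$-torsor structure of Lemma \ref{lemTorsor} along the closed embedding $i_X$, and then carve out $\tilde{Y}(\underline{w})$ as a fiber of $\lambda$ using the equivariance of Lemma \ref{lemTorsor2} together with the identification of Lemma \ref{lemTorsor5}. The calculation of the subgroup $\widetilde{\Gm^r}$ in (ii) is then essentially elementary once one invokes Lang's theorem.

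For (i), base change of $\pi$ along $i_X$ makes
\[
q : \overline{X}(\underline{w}) \times_{\overline{\mathcal{X}}(\underline{w})} \tilde{\mathcal{Y}}(\underline{w}) \longrightarrow \overline{X}(\underline{w})
\]
into a $T \times \Gm^r$-torsor, and by Lemma \ref{lemTorsor5} the map $p$ is the restriction of $q$ to $\lambda^{-1}(1)$. Étale-locally on $\overline{X}(\underline{w})$ we may trivialize $q$; because $\lambda$ is $T \times \Gm^r$-equivariant through $\rho$ by Lemma \ref{lemTorsor2}, under such a trivialization $\lambda$ takes the shape $(x,g) \mapsto \lambda_0(x)\cdot \rho(g)$ for a local map $\lambda_0$ to $T$. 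The fiber $\lambda^{-1}(1)$ is then $\{(x,g): \rho(g) = \lambda_0(x)^{-1}\}$. Surjectivity of $\rho$ (established in (ii) below) guarantees that this cuts out a nonempty subscheme over every point of the base, and the freeness of the action of $\ker \rho = \widetilde{\Gm^r}$ exhibits $p$ as a $\widetilde{\Gm^r}$-torsor.

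For (ii), unwinding the definition of $\rho$, an element $(t, z_1, \dots, z_r) \in T \times \Gm^r$ lies in $\widetilde{\Gm^r}$ if and only if $\mathcal{L}_{w\Frob_T}(t) = \chi(z_1, \dots, z_r)$, the outer $w^{-1}$ playing no role. The kernel of the projection $\widetilde{\Gm^r} \to \Gm^r$ consists of those elements with every $z_i = 1$, equivalently with $\mathcal{L}_{w\Frob_T}(t) = 1$, which is precisely $T^{w\Frob_T}$. For surjectivity onto $\Gm^r$, given any tuple $(z_i)$ the element $\chi(z_i) \in T$ lies in the image of $\mathcal{L}_{w\Frob_T}$ by Lang's theorem applied to the $w\Frob_T$-structure on $T$, which yields a lift $(t, z_i) \in \widetilde{\Gm^r}$.

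There is no serious obstacle here; the one point worth emphasizing is that the surjectivity of $\rho$ (equivalently, of the Lang map $\mathcal{L}_{w\Frob_T}$) is exactly what is needed both to globalize the local torsor description in (i) and to produce preimages in (ii).
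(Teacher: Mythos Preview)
Your proof is correct and follows essentially the same approach as the paper: both arguments pull back the $T \times \Gm^r$-torsor $\pi$ along $i_X$, identify $\tilde{Y}(\underline{w})$ with $\lambda^{-1}(1)$ via Lemma~\ref{lemTorsor5}, use the $\rho$-equivariance of $\lambda$ from Lemma~\ref{lemTorsor2} to deduce that $\widetilde{\Gm^r} = \ker\rho$ acts simply transitively on the fibers of $p$, and invoke Lang's theorem for part~(ii). The only cosmetic difference is that the paper computes the fiber product $\tilde{Y}(\underline{w}) \times_{\overline{X}(\underline{w})} \tilde{Y}(\underline{w})$ directly, whereas you phrase the same computation through an \'etale-local trivialization of the ambient torsor; your version has the minor advantage of making the surjectivity of $p$ (via surjectivity of $\rho$) explicit, which the paper uses later but does not spell out here.
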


\begin{proof}
For the first part, we compute $\tilde{Y}(\underline{w}) \times_{\overline{X}(\underline{w})} \tilde{Y}(\underline{w})$. Given $\underline{g}U$ and $\underline{h}U \in \tilde{Y}(\underline{w})$ such that $\underline{g}B = \underline{h}B$, there exists $(t,z) \in T \times \Gm^r$ such that $\underline{g}U.(t,z) = \underline{h}U$. In particular after applying $\lambda$ we get $\lambda(\underline{h}U) = \rho(t,z)\lambda(\underline{g}U)$. By lemma \ref{lemTorsor2}, and by lemma \ref{lemTorsor5} we have $\lambda(\underline{g}U) = \lambda(\underline{h}U) = 1$ which implies that $\rho(t,z) = 1$ and $(t,z) \in \widetilde{\Gm^r}$.

For the second part, the surjectivity is clear by the surjectivity of the Lang map. The kernel of the map $\widetilde{\Gm^r} \rightarrow \Gm^r$ is the kernel of $\rho$ restricted to $T \times \{1\}$, which is the kernel of $\mathcal{L}_{w\Frob_T}$.
\end{proof}

In particular the group $\widetilde{\Gm^r}$ sits in an extension 
\begin{equation*}
1 \rightarrow T^{w\Frob_T} \rightarrow \widetilde{\Gm^r} \xrightarrow{q} \Gm^r \rightarrow 1.
\end{equation*}

\begin{proof}[Proof of lemma \ref{MainLem1}]
Our strategy is the following, we first consider $p^*p_! \phi^*\mathcal{L}_{\psi}$ and show that the corresponding morphism $\id \rightarrow j_*j^*$ is an isomorphism. Since $p$ is surjective, it is conservative, which will imply the claim on $\overline{X}(\underline{w})$. 

Consider the following diagram
\[\begin{tikzcd}
	& {\A^r \times \widetilde{\Gm^r}} & {\A^r} \\
	{\A^r \times \widetilde{\Gm^r}} & {\tilde{{Y}}(\underline{w}) \times \widetilde{\Gm^r}} & {\tilde{{Y}}(\underline{w})} \\
	{\A^r} & {\tilde{{Y}}(\underline{w})} & {\overline{X}(\underline{w})} \\
	{}
	\arrow["{q_1}", from=2-2, to=2-3]
	\arrow["{p}", from=2-3, to=3-3]
	\arrow["a"', from=2-2, to=3-2]
	\arrow["{p}"', from=3-2, to=3-3]
	\arrow["{\phi}", from=3-2, to=3-1]
	\arrow["{\phi \times \id}"', from=2-2, to=2-1]
	\arrow["a"', from=2-1, to=3-1]
	\arrow["{q_1}", from=1-2, to=1-3]
	\arrow["{\phi}"', from=2-3, to=1-3]
	\arrow["{\phi \times \id}", from=2-2, to=1-2]
\end{tikzcd}\]
where the maps $q_1$ are the first projections and the maps $a$ are the actions maps, for the action of $\widetilde{\Gm^r}$ on $\tilde{{Y}}(\underline{w})$ defined previously and the action of $\widetilde{\Gm^r}$ on $\A^r$ given by the projection $\widetilde{\Gm^r}\rightarrow \Gm^r$ and the dilatation action of $\Gm^r$ on $\A^r$. By the lemmas \ref{lemTorsor2} and \ref{lemTorsor3} diagram is commutative. The bottom right square is Cartesian by lemma \ref{lemTorsor6}, the two other squares are Cartesian.

For all the schemes in the above diagram we denote by $j$ the open inclusions
\begin{enumerate}
\item $X(\underline{w}) \subset \overline{X}(\underline{w})$,
\item $\tilde{Y}_{\emptyset}(\underline{w}) \subset \tilde{Y}(\underline{w})$, 
\item $\Gm^r \subset \A^r$,
\item $\tilde{Y}_{\emptyset}(\underline{w}) \times \widetilde{\Gm^r} \subset \tilde{Y}(\underline{w}) \times \widetilde{\Gm^r}$,
\item $\Gm^r \times \widetilde{\Gm^r} \subset\A^r \times \widetilde{\Gm^r}$.
\end{enumerate}

We now have a commutative diagram of functors $\DD_c^b(\A^r, \Lambda) \rightarrow \DD_c^b(\tilde{Y}(\underline{w}), \Lambda)$. 

\[\begin{tikzcd}
	{p^*p_!\phi^*} & {j_*j^*p^*p_!\phi^*} \\
	{q_{1,!}a^*\phi^*} & {j_*j^*q_{1,!}a^*\phi^*} \\
	{q_{1,!}(\phi \times \id)^*a^*} & {j_*j^*q_{1,!}(\phi \times \id)^*a^*} \\
	{\phi^*q_{1,!}a^*} & {j_*j^*\phi^*q_{1,!}a^*} \\
	{\phi^*q_{1,!}a^*} & {\phi^*j_*j^*q_{1,!}a^*}
	\arrow[from=1-1, to=1-2]
	\arrow[from=1-2, to=2-2]
	\arrow[from=1-1, to=2-1]
	\arrow[from=2-1, to=2-2]
	\arrow[from=2-1, to=3-1]
	\arrow[from=2-2, to=3-2]
	\arrow[from=3-1, to=3-2]
	\arrow[from=3-2, to=4-2]
	\arrow[from=3-1, to=4-1]
	\arrow[from=4-1, to=4-2]
	\arrow[Rightarrow, no head, from=4-1, to=5-1]
	\arrow[from=4-2, to=5-2]
	\arrow[from=5-1, to=5-2]
\end{tikzcd}\]
All horizontal maps are induced by the natural transformation $\id \rightarrow j_*j^*$ and vertical maps are either proper base change maps or induced the commutativity of the above diagram, the bottom right vertical map comes from smooth base change using that the map $\phi$ is smooth by \ref{lemTorsor4}. All vertical maps are isomorphisms therefore to show that the top horizontal map is an isomorphism, it is enough to show that the map $q_{1,!}a^* \rightarrow j_*j^*q_{1,!}a^*$ is an isomorphism. We only need to case of the sheaf $\mathcal{L}_{\psi}(-\sigma)$. 

Let us now unfold the definition of $q_{1,!}a^*\mathcal{L}_{\psi}(-\sigma)$. Consider the commutative diagram 
\[\begin{tikzcd}
	{\widetilde{\Gm^r}} & {\Gm^r} \\
	{\A^r \times \widetilde{\Gm^r}} & {\A^r \times \Gm^r} & {\A^r} \\
	{\A^r} & {\A^r} & {\A^r \times \A^r}
	\arrow["{\id \times q}", from=2-1, to=2-2]
	\arrow[Rightarrow, no head, from=3-1, to=3-2]
	\arrow["{p_1}"', from=2-1, to=3-1]
	\arrow["{p_1}", from=2-2, to=3-2]
	\arrow["a", from=2-2, to=2-3]
	\arrow["m"', from=3-3, to=2-3]
	\arrow["{\id \times j}"{description}, from=2-2, to=3-3]
	\arrow["{q_1}", from=3-3, to=3-2]
	\arrow["{q_2}"', from=2-2, to=1-2]
	\arrow[from=2-1, to=1-1]
	\arrow["q", from=1-1, to=1-2]
\end{tikzcd}\]
where $m$ is the multiplication $((x_i), (y_i)) \mapsto (x_iy_i), q_1$ is the projection on the first factor, $q_2$ the projection on the second factor and $a$ is again the action by dilation. We then have 

\begin{align*}
q_{1,!}a^*\mathcal{L}_{\psi}(-\sigma) &= q_{1,!}(q_2^*(q_!\Lambda) \otimes a^*\mathcal{L}_{\psi}(-\sigma)) \\
&= q_{1,!}(\id \times j)_!(q_2^*(q_!\Lambda) \otimes (\id \times j)^*m^*\mathcal{L}_{\psi}(-\sigma)) \\
&= q_{1,!}((q_2^*j_!q_!\Lambda) \otimes m^*\mathcal{L}_{\psi}(-\sigma)). 
\end{align*}

We introduce the Fourier-Deligne transform $\mathcal{F}_{-\psi} : \DD_c^b(\A^r, \Lambda) \rightarrow \DD_c^b(\A^r, \Lambda)$ of \cite{LaumonFourier}. Consider the diagram 
\[\begin{tikzcd}
	& {\A^r \times \A^r} & \A \\
	{\A^r} && {\A^r}
	\arrow["{\sigma \circ m}", from=1-2, to=1-3]
	\arrow["{q_1}"', from=1-2, to=2-3]
	\arrow["{q_2}", from=1-2, to=2-1]
\end{tikzcd}\]
then $\mathcal{F}_{-\psi}(A) = q_{1,!}(q^*_2 A \otimes (\sigma \circ m)^*\mathcal{L}_{\psi})[r]$. We then clearly have 
\begin{equation*}
q_{1,!}a^*\mathcal{L}_{\psi}(-\sigma) = \mathcal{F}_{-\psi}(j_!q_!\Lambda)[-r].
\end{equation*}
The sheaf $j_!q_!\Lambda$ has a finite filtration in $\DD_c^b(\A^r, \Lambda)$ with graded pieces given by $j_!\mathcal{L}_{\chi}$ where $\chi$ is a character of $T^{w\Frob_T}$ and $\mathcal{L}_{\chi}$ is the corresponding sheaf on $\Gm^r$. It is enough to show that $\id \rightarrow j_*j^*$ is an isomorphism for $\mathcal{F}_{-\psi}(j_!\mathcal{L}_{\chi})$ which is the content of lemma \ref{lemFourier}. 
\end{proof}

\begin{lem}\label{lemFourier}
Let $\mathcal{L}_{\theta}$ be a Kummer sheaf on $\Gm^r$ and $j : \Gm^r \rightarrow \A^r$ be the inclusion. As in the proof of lemma \ref{MainLem1}, we denote by $\mathcal{F}_{\psi}$ the Fourier transform functor. We have
\begin{equation*}
\mathcal{F}_{\psi}(j_!\mathcal{L}_{\theta}[r]) = j_*\mathcal{L}_{\theta^{-1}}[r] 
\end{equation*}
\end{lem}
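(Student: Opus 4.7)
The plan is to reduce the lemma to the one-dimensional case $r=1$ via multiplicativity of the Fourier transform, then to compute stalks directly and identify the result with the desired extension.

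For the reduction, I would use that the pairing defining the kernel of $\mathcal{F}_\psi$ is the sum $\sum_i x_iy_i$, so under the reordering $\A^r \times \A^r \simeq (\A^1 \times \A^1)^r$ the kernel splits as a box product $\boxtimes_i \mathcal{L}_\psi(x_iy_i)$. The K\"unneth formula then gives
\[\mathcal{F}_\psi(A_1 \boxtimes \cdots \boxtimes A_r) \simeq \mathcal{F}_\psi(A_1) \boxtimes \cdots \boxtimes \mathcal{F}_\psi(A_r).\]
Every character $\theta$ of $\Gm^r$ factors as a product of characters $\theta_i$ of $\Gm$, so $\mathcal{L}_\theta \simeq \boxtimes_i \mathcal{L}_{\theta_i}$, and since $j : \Gm^r \to \A^r$ is itself a product of one-dimensional open immersions, one has $j_!\mathcal{L}_\theta \simeq \boxtimes_i j_!\mathcal{L}_{\theta_i}$ and likewise for $j_*\mathcal{L}_{\theta^{-1}}$. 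This reduces the lemma to the case $r=1$.

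For $r=1$, I would compute stalks by proper base change: for $x \in \A^1$,
\[\mathcal{F}_\psi(j_!\mathcal{L}_\theta[1])_x \simeq \RGamma_c(\Gm, \mathcal{L}_\theta \otimes \mathcal{L}_\psi(xy))[2].\]
At $x=0$ this is $\RGamma_c(\Gm, \mathcal{L}_\theta)[2]$, which vanishes when $\theta$ is a nontrivial Kummer character. At $x \neq 0$ the substitution $y \mapsto x^{-1}y$ identifies the stalk with $(\mathcal{L}_{\theta^{-1}})_x \otimes \RGamma_c(\Gm, \mathcal{L}_\theta \otimes \mathcal{L}_\psi)[2]$, the Gauss-sum factor on the right being a one-dimensional $\Lambda$-vector space concentrated in a single degree. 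Hence the restriction to $\Gm$ is isomorphic to $\mathcal{L}_{\theta^{-1}}[1]$ up to twist by this one-dimensional space, and the stalk at $0$ vanishes.

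To upgrade the stalkwise match to a genuine isomorphism of sheaves, I would invoke perversity: the Fourier transform preserves perverse sheaves, and since $\theta^{-1}$ is nontrivial one has $j_!\mathcal{L}_{\theta^{-1}}[1] = j_{!*}\mathcal{L}_{\theta^{-1}}[1] = j_*\mathcal{L}_{\theta^{-1}}[1]$, so both sides are minimal perverse extensions of canonically isomorphic local systems on $\Gm$, which determines them up to isomorphism. The main obstacle is bookkeeping the Gauss-sum factor so as to make the displayed equality literal; for the application in lemma \ref{MainLem1}, however, the essential content is only that $\mathcal{F}_\psi(j_!\mathcal{L}_\theta[r])$ is of the form $j_*(\text{local system on }\Gm^r)$, and this already follows from the vanishing of the stalks along the boundary $\A^r \setminus \Gm^r$ established above.
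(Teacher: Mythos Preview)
Your reduction to $r=1$ via K\"unneth matches the paper's approach exactly, and for nontrivial $\theta$ your stalk computation combined with perversity is a legitimate alternative to the paper's direct citation of \cite{LaumonFourier} 1.4.3.2.

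The gap is that you never treat the case where $\theta$ (or, after reducing to rank one, some factor $\theta_i$) is trivial. This case is genuinely needed: in the proof of Lemma \ref{MainLem1} the sheaf $j_!q_!\Lambda$ is filtered with graded pieces $j_!\mathcal{L}_\chi$ for $\chi$ ranging over \emph{all} characters of $T^{w\Frob_T}$, including the trivial one. For trivial $\theta$ your stalk at $0$ is $\RGamma_c(\Gm,\Lambda)[2]$, which is nonzero, so the boundary-vanishing argument collapses; and your perversity step fails as well, since on $\A^1$ one has $j_!\Lambda[1] \neq j_{!*}\Lambda[1] = \Lambda_{\A^1}[1] \neq j_*\Lambda[1]$. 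The paper handles this case separately by applying $\mathcal{F}_\psi$ to the triangle $j_!\Lambda \to \Lambda \to i_*\Lambda$ and identifying the resulting map $i_*\Lambda \to \Lambda[1]$ with the adjunction $i_!i^! \to \id$, whence $\mathcal{F}_\psi(j_!\Lambda[1]) = j_*\Lambda[1]$.

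A smaller point: your closing sentence conflates two conditions. Vanishing of the $*$-stalks along the boundary says the sheaf is a $j_!$-extension; the condition for $\id \to j_*j^*$ to be an isomorphism is rather that the $!$-restriction to the boundary vanishes. For nontrivial Kummer sheaves the two extensions coincide and the distinction is harmless, but for the trivial character they differ, which is exactly where your argument breaks.
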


\begin{proof}
We can decompose $\mathcal{L}_{\theta}$ as a tensor product $\boxtimes \mathcal{L}_{\theta_i}$ where $\boxtimes \mathcal{L}_{\theta_i}$ is a Kummer sheaf on $\Gm$. Then we have 
\begin{equation*}
\mathcal{F}(j_!\mathcal{L}_{\theta}) = \boxtimes \mathcal{F}(j_!\mathcal{L}_{\theta_i}).
\end{equation*}
which reduces the problem to a question in dimension one. Consider now the situation where $\mathcal{L}_{\theta}$ is a Kummer sheaf on $\Gm$ there are two cases.
\begin{enumerate}
\item The character $\theta$ is nontrivial, then by \cite{LaumonFourier} 1.4.3.2 we have $\mathcal{F}(j_!\mathcal{L}_{\theta}[1]) = j_*\mathcal{L}_{\theta^{-1}}[1]$ up to twist by a geometrically constant sheaf.
\item The character $\theta$ is trivial, then we have a triangle $\mathcal{F}(j_!\Lambda[1]) \rightarrow \mathcal{F}(\Lambda[1]) \rightarrow \mathcal{F}(i_*\Lambda[1])$ where $i : * \rightarrow \A^1$ is the origin. We have identifications $\mathcal{F}(\Lambda[1]) = i_*\Lambda$ and $\mathcal{F}(i_*\Lambda[1]) = \Lambda[1]$, and by \cite{KW} III.13.6, we can identify the Fourier transform of the adjunction map $\id \rightarrow i_*i^*$ with the adjunction map $i_!i^! \rightarrow \id$ which then yields $\mathcal{F}(j_!\Lambda[1]) = j_*\Lambda[1]$.
\end{enumerate}
\end{proof}

\subsection*{Proof of lemma \ref{MainLem2}}

We first consider the following diagram 
\[\begin{tikzcd}
	{Y(w)} & {\tilde{Y}_{\emptyset}(\underline{w})} & {\Gm^r} \\
	& {X(w)}
	\arrow["p", from=1-2, to=2-2]
	\arrow[from=1-1, to=1-2]
	\arrow[from=1-1, to=2-2]
	\arrow["\phi", from=1-2, to=1-3]
\end{tikzcd}\]
where $Y(w)$ is the Deligne-Lusztig variety which can be identified with the fiber at $(1, \dots, 1)$ of $\phi$. The inclusion $Y(w) \rightarrow \tilde{Y}_{\emptyset}(\underline{w})$ is also clearly equivariant for $T^{w\Frob_T} \subset \widetilde{\Gm^r}$. Since $Y(w) \rightarrow X(w)$ is a $T^{w\Frob_T}$-torsor, the action of $\widetilde{\Gm^r}$ induces an isomorphism 
\begin{equation*}
Y(w) \times^{T^{w\Frob_T}} \widetilde{\Gm^r} \simeq \tilde{Y}_{\emptyset}(\underline{w}), 
\end{equation*}
where $Y(w) \times^{T^{w\Frob_T}} \widetilde{\Gm^r}$ is the quotient of $Y(w) \times \widetilde{\Gm^r}$ by the action of $T^{w\Frob_T}$ given by $t.(x,y) = (t^{-1}x,ty)$. We can now compute 
\begin{align*}
p_!\phi^*\mathcal{L}_{\psi}(-\sigma)[2r] &= p_!(\Lambda_{Y(w)} \boxtimes^{T^{w\Frob_T}} q^*\mathcal{L}_{\psi}(-\sigma)[2r]) \\
&= (\mathcal{F} \otimes \RGamma_c(\widetilde{\Gm^r}, q^*\mathcal{L}_{\psi}(-\sigma)[2r]))^{T^{w\Frob_T}}.
\end{align*}

\begin{lem}\label{lemComputation}
There is a $T^{w\Frob_T}$-equivariant isomorphism 
\begin{equation*}
\RGamma_c(\widetilde{\Gm^r}, q^*\mathcal{L}_{\psi}(-\sigma)[2r]) = \Lambda[T^{w\Frob_T}][r]. 
\end{equation*}
\end{lem}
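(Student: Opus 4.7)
The plan is to recognize the left-hand side as the stalk of a Fourier--Deligne transform at a specific point, and to exploit the filtration of $q_!\Lambda$ already used in the proof of Lemma \ref{MainLem1}.

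First, since $q : \widetilde{\Gm^r} \to \Gm^r$ is a $T^{w\Frob_T}$-torsor (Lemma \ref{lemTorsor6}), the projection formula provides a $T^{w\Frob_T}$-equivariant identification
\[
\RGamma_c(\widetilde{\Gm^r}, q^*\mathcal{L}_\psi(-\sigma)) \simeq \RGamma_c(\Gm^r, \mathcal{L}_\psi(-\sigma) \otimes q_!\Lambda),
\]
where $T^{w\Frob_T}$ acts on the right through its natural action on $q_!\Lambda$. Letting $j : \Gm^r \hookrightarrow \A^r$ be the open inclusion and unfolding the definition of $\mathcal{F}_{-\psi}$ from the proof of Lemma \ref{MainLem1}, I would identify this expression with the stalk at the point $\underline{-1} := (-1, \ldots, -1) \in \Gm^r$ of a Fourier transform:
\[
\RGamma_c(\Gm^r, \mathcal{L}_\psi(-\sigma) \otimes q_!\Lambda) \simeq \mathcal{F}_{-\psi}(j_!q_!\Lambda)_{\underline{-1}}[-r].
\]

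Next, I would invoke the $T^{w\Frob_T}$-equivariant filtration of $j_!q_!\Lambda$ whose graded pieces are the sheaves $j_!\mathcal{L}_\chi$ indexed by characters $\chi$ of $T^{w\Frob_T}$, with $T^{w\Frob_T}$ acting on $\mathcal{L}_\chi$ via $\chi$ --- this is exactly the filtration appearing in the proof of Lemma \ref{MainLem1}. Applying Lemma \ref{lemFourier} to each graded piece replaces $j_!\mathcal{L}_\chi$ by $j_*\mathcal{L}_{\chi^{-1}}$ after Fourier transform. Since $\underline{-1}$ lies in the open locus $\Gm^r$, the stalk of each $j_*\mathcal{L}_{\chi^{-1}}$ there is a free $\Lambda$-module of rank one concentrated in degree $0$, on which $T^{w\Frob_T}$ still acts by the character $\chi$. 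Summing contributions over all characters and identifying $\bigoplus_\chi \Lambda_\chi$ with the regular representation yields
\[
\mathcal{F}_{-\psi}(j_!q_!\Lambda)_{\underline{-1}} \simeq \Lambda[T^{w\Frob_T}]
\]
in degree $0$; combining the shifts $[-r]$ and $[2r]$ then produces the desired identification with $\Lambda[T^{w\Frob_T}][r]$.

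The main obstacle is the $T^{w\Frob_T}$-equivariance when $\Lambda$ has characteristic dividing $|T^{w\Frob_T}|$: the filtration of $q_!\Lambda$ no longer splits into character eigenlines, and one must instead argue that the filtration and its behavior under $\mathcal{F}_{-\psi}$ realize the full group algebra structure on the stalk. I would handle this by identifying $q_!\Lambda$ as the pullback under $\chi : \Gm^r \to T$ of the regular local system $L_!\Lambda$ attached to the Lang torsor for $w\Frob_T$, which is universally the regular $\Lambda[T^{w\Frob_T}]$-module on every geometric stalk, so that the Fourier-transformed stalk at $\underline{-1}$ inherits precisely the regular representation structure regardless of the characteristic of $\Lambda$.
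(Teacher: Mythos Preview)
Your Fourier-transform reorganization is a different route from the paper's and is correct when $|T^{w\Frob_T}|$ is invertible in $\Lambda$: the projection formula, the identification with the stalk of $\mathcal{F}_{-\psi}(j_!q_!\Lambda)$ at $\underline{-1}\in\Gm^r$, and the application of Lemma~\ref{lemFourier} to each graded piece all go through, and in the semisimple regime the character filtration splits so you read off the regular representation. The paper never invokes Fourier here: it instead uses Abhyankar's lemma to dominate $q$ by a product of Kummer covers $(x_i)\mapsto(x_i^{m_i})$, reduces by K\"unneth to $r=1$, and computes $\RGamma_c(\Gm,[m]_*\Lambda\otimes\mathcal{L}_{-\psi})$ directly for $m$ prime.

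The gap is your last paragraph. That $q_!\Lambda$ is pulled back from the Lang local system, hence has geometric stalks isomorphic to $\Lambda[T^{w\Frob_T}]$, is correct but does \emph{not} imply that the Fourier-transformed stalk is free of rank one over $\Lambda[T^{w\Frob_T}]$: a sheaf of free $A$-modules can have compactly supported cohomology equal to an arbitrary $A$-module, so ``regular on stalks'' does not propagate to ``regular on cohomology''. What is missing is a Nakayama step. Your filtration already shows the stalk $M$ is concentrated in one degree and $\Lambda$-free of rank $|T^{w\Frob_T}|$; one must then check, for each local factor $A_e$ of $A=\Lambda[T^{w\Frob_T}]$ with residue field $k_e$, that $M\otimes^L_A k_e$ is rank one in a single degree (this is the single-character computation you already did), and conclude via the local flatness criterion that the $A_e$-component of $M$ is free of rank one. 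The paper performs exactly this Nakayama maneuver after its reduction to $r=1$ and $m=\ell$, where $\Lambda[\mu_\ell]$ is already local; so the modular case needs the same extra input in either approach, and your proposal as written does not supply it.
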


\begin{proof}
We first reduce the problem to a statement in rank one. By the Abhyankar lemma we can refine the covering $\widetilde{\Gm^r} \rightarrow \Gm^r$ into a covering $\Gm^r \rightarrow \widetilde{\Gm^r} \rightarrow \Gm^r$ such that the composition is given by $\alpha :  (x_i) \mapsto (x_i^{m_i})$ for some $m_i > 0$. Let $H$ be the kernel of $\Gm^r \rightarrow \widetilde{\Gm^r}$ then as $\Lambda[\prod_i \mu_i]^H = \Lambda[T^{w\Frob_T}]$ it is enough to that 

\begin{equation*}
\RGamma_c(\Gm^r, \alpha_*\Lambda \otimes \mathcal{L}_{\psi}(-\sigma)[2r]) = \Lambda[\prod_i \mu_i][r].
\end{equation*}
By the Kunneth formula 
\begin{equation*}
\RGamma_c(\Gm^r, \alpha_*\Lambda \otimes \mathcal{L}_{\psi}(-\sigma)[2r]) = \otimes_i \RGamma_c(\Gm, [m_i]_* \Lambda \otimes \mathcal{L}_{-\psi}[2]),
\end{equation*}
where $[m_i] : \Gm \rightarrow \Gm$ is the $m_i$th power map. We can then assume that $r = 1$. By further devissage we can assume that $m = m_1$ is a prime different from $p$. There are now two cases
\begin{enumerate}
\item $\ell$ is invertible in $\Lambda$ or $m \neq \ell$, 
\item $m = \ell$ and $\ell$ is not invertible in $\Lambda$.
\end{enumerate}

In the first case, $[m]_*\Lambda = \bigoplus \mathcal{L}_{\chi}$, where the sum is indexed by all characters of $\mu_m$. Then we have 
\begin{align*}
\RGamma_c(\Gm, \mathcal{L}_{-\psi} \otimes \mathcal{L}_{\chi})[2] = \Lambda_{\chi^{-1}}[1],
\end{align*}
where $\Lambda_{\chi}$ is the free $\Lambda$-module of rank one with action $\mu_m$ given by $\chi$.

For the second case, we introduce $\mathcal{C}$ the category of classical lisse sheaves of $\Lambda$-modules on $\Gm$ which are tame at $0$ and wild at $\infty$. This category was introduced by Katz in \cite{Katz}, on this category the functor $A \mapsto H^1_c(\Gm, A)$ is exact. In particular it induces an exact functor $\mathcal{C}^{T^{w\Frob_T}} \rightarrow \Lambda[T^{w\Frob_T}]$-mod. The sheaf $[\ell]_*\Lambda$ is an iterated extension of the constant sheaf, and as $\RGamma_c(\Gm, \mathcal{L}_{-\psi}[1]) = \Lambda$, we know that that $\RGamma_c(\Gm, [\ell]_* \Lambda \otimes \mathcal{L}_{-\psi}[1])$ is a free $\Lambda$-module of rank $\ell$. It remains to identify the $\mu_{\ell}$-structure. The ring $\Lambda[\mu_{\ell}]$ is a local ring and the reduction modulo the maximal ideal of $H^1_c(\Gm, [\ell]_*\Lambda \otimes \mathcal{L}_{\psi})$ is $H^1_c(\Gm, \Flb) = \Flb$. By Nakayama's lemma, $H^1_c(\Gm, [\ell]_*\Lambda \otimes \mathcal{L}_{\psi})$ is generated by one element and since it is of rank $\ell$ over $\Lambda$ it is isomorphic to $\Lambda[\mu_{\ell}]$. 
\end{proof}

\begin{proof}[Proof of lemma \ref{MainLem2}.] We can now compute
\begin{align*}
p_!\phi^*\mathcal{L}_{\psi}(-\sigma)[2r] &= (\mathcal{F} \otimes \RGamma_c(\widetilde{\Gm^r}, q^*\mathcal{L}_{\psi}(-\sigma)[2r]))^{T^{w\Frob_T}} \\
&\simeq (\mathcal{F} \otimes \Lambda[T^{w\Frob_T}])^{T^{w\Frob_T}}[r] \\
&\simeq \mathcal{F}[r].
\end{align*}
\end{proof}

\subsection*{The general case.}

In the previous section we assumed that $G$ had simply connected derived subgroup, we now extend the theorem to all $G$. Recall that we have a sheaf $K_w^{\circ} \in \DD_c^b(U \backslash G / U, \Lambda)$ for all $w$ and we also denoted the corresponding kernel $K_w \in \DD_c^b(G/U \times G/U, \Lambda)$ that is $K_w = a^* K_w^{\circ}$. 

\begin{thm}\label{KLvsDL}
We have a $G^{\Frob_G} \times T^{w\Frob_T}$-equivariant isomorphism 
\begin{equation*}
\RGamma_c(G/U, (\id \times \Frob_{G/U})^*K_w) \simeq \RGamma(Y(w), \Lambda)[\ell(w)].
\end{equation*}
\end{thm}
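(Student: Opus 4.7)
The strategy is to reduce to the simply connected case (Theorem~\ref{ThmKLtoDL}) via the isogeny $h: G^{sc} \to G$. Let $Z = \ker(h) \subset T^{sc}$ denote the finite central kernel, and $\tilde h : G^{sc}/U^{sc} \to G/U$ the induced map. By the definition at the end of Section~1, $K_w = (\tilde h \times \tilde h)_! K_w^{sc}$, where $K_w^{sc}$ is the Kazhdan-Laumon sheaf for $G^{sc}$. Applying base change to the Cartesian square
\[\begin{tikzcd}
W \arrow[r,"q"] \arrow[d,"p"'] & G^{sc}/U^{sc} \times G^{sc}/U^{sc} \arrow[d, "\tilde h \times \tilde h"] \\
G/U \arrow[r, "\id \times \Frob_{G/U}"'] & G/U \times G/U
\end{tikzcd}\]
yields $\RGamma_c(G/U, (\id \times \Frob_{G/U})^*K_w) \simeq \RGamma_c(W, q^* K_w^{sc})$. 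A direct computation, based on the fact that $\tilde h^{-1}(U) = Z \cdot U^{sc}$, identifies $W$ with the disjoint union $\bigsqcup_{z \in Z} \Gamma_z^{sc}$, where $\Gamma_z^{sc}$ is the graph in $G^{sc}/U^{sc} \times G^{sc}/U^{sc}$ of the twisted Frobenius $\Frob_z(g^{sc}U^{sc}) := \Frob(g^{sc}) z U^{sc}$; each summand is isomorphic to $G^{sc}/U^{sc}$ via the first projection.

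Next, I would apply the argument of Section~2 to each twisted pair $(G^{sc}, \Frob_z)$. That argument is geometric in nature and its dependence on the $\F_q$-structure enters only through the Lang map appearing in Lemma~\ref{lemTorsor2}; replacing $\Frob$ by $\Frob_z$ simply shifts the distinguished fiber of the map $\lambda$ from $1$ to $z$, and the chain of Lemmas~\ref{lemTorsor4}, \ref{lemTorsor6}, and \ref{lemFourier} remains valid. This produces the identification
\begin{equation*}
\RGamma_c(G/U, (\id \times \Frob_{G/U})^*K_w) \simeq \bigoplus_{z \in Z} \RGamma(Y^{sc}_z(w), \Lambda)[\ell(w)],
\end{equation*}
where $Y^{sc}_z(w)$ denotes the Deligne-Lusztig variety of $G^{sc}$ for the twisted Frobenius $\Frob_z$.

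The final step, which is the main obstacle, is to identify this direct sum with $\RGamma(Y_G(w), \Lambda)[\ell(w)]$ equivariantly for the $G^{\Frob_G} \times T^{w\Frob_T}$-action. The isogeny $h$ sends each $Y^{sc}_z(w)$ into $Y_G(w)$, and the assembled map realizes $Y_G(w)$ as the quotient of $\bigsqcup_{z \in Z} Y^{sc}_z(w)$ by a natural free action of $T^{sc, w\Frob_T}$ (acting on each component through its Deligne-Lusztig structure and on the index set $Z$ via the appropriate Lang cocycle). The delicate part is the equivariance bookkeeping: using the Lang-type exact sequence
\begin{equation*}
1 \to T^{sc, w\Frob_T} \to T^{w\Frob_T} \to H^1(w\Frob_T, Z)
\end{equation*}
arising from $1 \to Z \to T^{sc} \to T_{der} \to 1$, one must verify that the induced $T^{w\Frob_T}$-action on the direct sum (by extension of scalars from $T^{sc, w\Frob_T}$) matches the natural action on $\RGamma(Y_G(w), \Lambda)$ and that the $G^{\Frob_G}$-action is preserved throughout.
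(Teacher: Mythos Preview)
Your reduction runs in the opposite direction from the paper's and contains a concrete error. The paper does not descend through $h:G^{sc}\to G$; it ascends to a $z$-extension $\pi:\tilde G\to G$ by a \emph{connected} central torus $Z$ with $\tilde G_{der}$ simply connected. Because $Z$ is connected, Lang's theorem gives $H^1(\Fq,Z)=0$, hence $\tilde T^{w\Frob_{\tilde T}}\to T^{w\Frob_T}$ is surjective with kernel $Z^{\Frob_Z}$, one has $\tilde Y(w)/Z^{\Frob_Z}\simeq Y(w)$, and a short diagram chase through $\tilde G/U\times^Z\tilde G/U$ yields
\[
\RGamma_c(G/U,(\id\times\Frob_{G/U})^*K_w)\simeq\RGamma_c(\tilde G/U,(\id\times\Frob_{\tilde G/U})^*\tilde K_w)^{Z^{\Frob_Z}},
\]
after which Theorem~\ref{ThmKLtoDL} for $\tilde G$ and the identification of the Deligne--Lusztig cohomologies conclude. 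No twisted Frobenii and no $H^1$ bookkeeping are needed.

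The specific gap in your argument is the identity $K_w=(\tilde h\times\tilde h)_!K_w^{sc}$. The square
\[
\begin{tikzcd}
G^{sc}/U\times G^{sc}/U \arrow[r,"a_{sc}"]\arrow[d,"\tilde h\times\tilde h"'] & U\backslash G^{sc}/U\arrow[d,"h"]\\
G/U\times G/U\arrow[r,"a"'] & U\backslash G/U
\end{tikzcd}
\]
commutes but is \emph{not} Cartesian: $a$ is a $G$-torsor while $a_{sc}$ is only a $G^{sc}$-torsor, so (in the semisimple case) the top-left corner is a torsor under the diagonal copy of $Z$ over the actual fiber product. For $G=\mathrm{PGL}_2$ and $w=s$ the generic stalk of $(\tilde h\times\tilde h)_!K_s^{sc}$ has rank $4$ while that of $K_s$ has rank $2$. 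Correcting this forces a passage to diagonal $Z$-invariants, and then your sum $\bigoplus_{z\in Z}\RGamma(Y^{sc}_z(w),\Lambda)$ must likewise be replaced by its $Z$-invariants to match $\RGamma(Y_G(w),\Lambda)$; this is precisely the ``main obstacle'' you flag but leave unresolved. Separately, for $G$ not semisimple the map $h$ is not surjective, so calling it an isogeny is already wrong and a further reduction is required. The $z$-extension device avoids all of these issues at once.
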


We already know that this is true if $G$ has simply connected derived subgroup. We fix $\pi : \tilde{G} \rightarrow G$ a central extension by a connected torus $Z$ such that $\tilde{G}$ has simply connected derived subgroup, we now do the reduction from $G$ to $\tilde{G}$. In particular the morphism $\pi$ induces an isomorphism on the unipotent radical of the corresponding Borel and on the Weyl groups, moreover the action of $W$ on $Z$ is trivial. We call $\tilde{B} = \tilde{T}U$ a Borel pair of $\tilde{G}$ that is sent to the Borel pair $B = TU$ of $G$. 

\begin{lem}
Let $w \in W$, then the morphism $\tilde{T}^{w\Frob_{\tilde{T}}} \rightarrow T^{w\Frob_T}$ is surjective with kernel $Z^{\Frob_Z}$. 
\end{lem}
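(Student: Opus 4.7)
The plan is to deduce this from Lang's theorem applied to the connected torus $Z$, via the short exact sequence of $w\Frob$-modules
\begin{equation*}
1 \to Z \to \tilde{T} \to T \to 1.
\end{equation*}
Since $\pi$ is central and the $W$-action on $Z$ is trivial, the endomorphism $w\Frob$ restricts on $Z$ to $\Frob_Z$, so the sequence is $w\Frob$-equivariant.

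Taking $w\Frob$-fixed points amounts to taking the kernel of the Lang map $L_{w\Frob}(x)=x^{-1}(w\Frob)(x)$. Applying the snake lemma to the diagram whose columns are the Lang maps for $Z$, $\tilde T$, $T$ gives an exact sequence
\begin{equation*}
1 \to Z^{\Frob_Z} \to \tilde{T}^{w\Frob_{\tilde T}} \to T^{w\Frob_T} \to Z/L_{\Frob_Z}(Z).
\end{equation*}
Because $Z$ is a connected commutative algebraic group over $\overline{\F_q}$, Lang's theorem gives $L_{\Frob_Z}(Z)=Z$, so the rightmost term vanishes; this delivers surjectivity of $\tilde T^{w\Frob_{\tilde T}} \to T^{w\Frob_T}$. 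The identification of the kernel with $Z^{\Frob_Z}$ is already built into the sequence, using once more that $w$ acts trivially on $Z$ so $Z^{w\Frob_Z} = Z^{\Frob_Z}$.

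The only nontrivial ingredient is the triviality of the $W$-action on $Z$, which ensures that $w\Frob$ behaves as ordinary Frobenius on $Z$ and hence that Lang's theorem applies; everything else is bookkeeping in the long exact sequence of a short exact sequence of diagonalizable groups with compatible endomorphisms.
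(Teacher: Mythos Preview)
Your proof is correct and follows essentially the same approach as the paper: both start from the short exact sequence $1 \to Z \to \tilde{T} \to T \to 1$, observe that $w\Frob$ restricts to $\Frob_Z$ on $Z$, and use Lang's theorem for the connected torus $Z$ to conclude. The paper phrases the vanishing as $H^1(\Fq, Z) = 0$ while you spell out the snake lemma and the surjectivity of the Lang map explicitly, but these are the same argument.
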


\begin{proof}
We have a short exact sequence $1 \rightarrow Z \rightarrow \tilde{T} \rightarrow T \rightarrow 1$ applying the functor of $w\Frob_{\tilde{T}}$ invariant produces a short exact sequence $1 \rightarrow Z^{\Frob_Z} \rightarrow \tilde{T}^{w\Frob_{\tilde{T}}} \rightarrow T^{w\Frob_T} \rightarrow 1$ because $H^1(\Fq, Z) = 0$ by Lang's theorem since $Z$ is connected. 
\end{proof}

We denote by $\tilde{Y}(w)$ the Deligne-Lusztig variety associated to $\tilde{G}$ and $w$. 

\begin{lem}
The projection $\tilde{G}/U \rightarrow G/U$ induces an isomorphism 
\begin{equation*}
\tilde{Y}(w)/Z^{\Frob_Z} \simeq Y(w).
\end{equation*}
\end{lem}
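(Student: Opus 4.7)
The plan is to show directly that the surjection $\tilde{G}/U \to G/U$ descends to an isomorphism $\tilde{Y}(w)/Z^{\Frob_Z} \simeq Y(w)$, by exhibiting $\tilde{Y}(w) \to Y(w)$ as a $Z^{\Frob_Z}$-torsor. The only nontrivial input will be Lang's theorem applied to the connected torus $Z$; the identification of the acting group with $Z^{\Frob_Z} \subset \tilde{T}^{w\Frob_{\tilde T}}$ is already set up by the preceding lemma.

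First I would fix compatible lifts $\dot{\tilde{w}} \in N_{\tilde{G}}(\tilde{T})$ and $\dot{w} \in N_G(T)$ with $\pi(\dot{\tilde{w}}) = \dot{w}$, and use the description
\[ Y(w) = \{gU \in G/U : g^{-1}\Frob(g) \in \dot{w}U\}, \]
and analogously for $\tilde{Y}(w)$. Because $\pi$ restricts to the identity on $U$ and sends $\dot{\tilde{w}}$ to $\dot{w}$, the map $\tilde{g}U \mapsto \pi(\tilde{g})U$ is visibly a well-defined, $Z^{\Frob_Z}$-invariant morphism $\tilde{Y}(w) \to Y(w)$. It therefore suffices to show that this morphism is surjective and that its fibers are exactly the $Z^{\Frob_Z}$-orbits.

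For surjectivity, given $gU \in Y(w)$ and any lift $\tilde{g} \in \tilde{G}$ of $g$, I would use $\pi^{-1}(\dot{w}U) = Z \cdot \dot{\tilde{w}}U$ to write $\tilde{g}^{-1}\Frob(\tilde{g}) = z\dot{\tilde{w}}u$ with $z \in Z$, $u \in U$, then apply Lang's theorem to the connected torus $Z$ to produce $z_0 \in Z$ with $z_0\Frob(z_0)^{-1} = z$. Using that $Z$ is central, the replacement $\tilde{g} \leftsquigarrow \tilde{g}z_0^{-1}$ cancels $z$ and yields
\[ (\tilde{g}z_0^{-1})^{-1}\Frob(\tilde{g}z_0^{-1}) = \bigl(z_0 z \Frob(z_0)^{-1}\bigr)\dot{\tilde{w}}u = \dot{\tilde{w}}u, \]
placing $\tilde{g}z_0^{-1}U$ inside $\tilde{Y}(w)$.

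Finally, for the fiber analysis, if $\tilde{g}U, \tilde{g}'U \in \tilde{Y}(w)$ have the same image $gU$, then $\tilde{g}'U = \tilde{g}zU$ for a unique $z \in Z$, and centrality of $Z$ gives
\[ (\tilde{g}z)^{-1}\Frob(\tilde{g}z) = z^{-1}\Frob(z)\cdot \tilde{g}^{-1}\Frob(\tilde{g}), \]
which lies in $\dot{\tilde{w}}U$ if and only if $z^{-1}\Frob(z) = 1$, that is, $z \in Z^{\Frob_Z}$. Thus the fibers are exactly $Z^{\Frob_Z}$-orbits for the right action induced by $Z^{\Frob_Z} \subset \tilde{T}^{w\Frob_{\tilde{T}}}$, giving the desired isomorphism. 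I do not anticipate any real obstacle: essentially every step is forced, and the only genuine content is the single application of Lang's theorem for $Z$.
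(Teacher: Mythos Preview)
Your approach is correct and essentially identical to the paper's: both arguments show that $\tilde{Y}(w) \to Y(w)$ is a $Z^{\Frob_Z}$-torsor by using Lang's theorem on the connected torus $Z$ for surjectivity and then checking that the fibers are exactly the $Z^{\Frob_Z}$-orbits. The only difference is cosmetic---the paper introduces the intermediate pullback $\tilde{Y}_Z(w) = Y(w) \times_{G/U} \tilde{G}/U$ (a $Z$-torsor over $Y(w)$) and then cuts out $\tilde{Y}(w)$ inside it, whereas you work directly; one small slip to fix when you write it up: with your convention $z_0\Frob(z_0)^{-1} = z$ the quantity $z_0 z \Frob(z_0)^{-1}$ is not $1$, so you want $z_0^{-1}\Frob(z_0) = z$ instead (Lang's theorem gives this just as well).
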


\begin{proof}
Consider the following diagram 

\[\begin{tikzcd}
	{\tilde{G}/U} & {\tilde{Y}_Z(w)} & {\tilde{Y}(w)} \\
	{G/U} & {Y(w)}
	\arrow[from=1-2, to=1-1]
	\arrow[from=1-1, to=2-1]
	\arrow[from=2-2, to=2-1]
	\arrow[from=1-2, to=2-2]
	\arrow[from=1-3, to=1-2]
	\arrow[from=1-3, to=2-2]
\end{tikzcd}\]
where the first square is Cartesian and the morphisms $Y(w) \rightarrow G/U$ and $\tilde{Y}(w) \rightarrow \tilde{G}/U$ are the inclusions. The morphism $\tilde{Y}(w) \rightarrow \tilde{Y}_Z(w)$ is a closed immersion. For $\tilde{g}U \in \tilde{G}/U$ the equation defining $\tilde{Y}_Z(w)$ is $\mathcal{L}(g) \in UwU$ where $g$ is the image of $\tilde{g}$ in $G/U$. It is enough to see that $\tilde{Y}(w) \rightarrow Y(w)$ is a $Z^{\Frob_Z}$-torsor, as $\tilde{Y}_Z(w) \rightarrow Y(w)$ is a $Z$-torsor we can check it fiberwise. Let $gU \in Y(w)$ and let $\tilde{g}U \in \tilde{Y}_Z(w)$ be a lift of $gU$, the fiber in $\tilde{Y}_Z(w)$ is isomorphic to $Z$, as the Lang map of $Z$ is surjective we can assume that $\tilde{g}U$ is in $\tilde{Y}(w)$. Then in the fiber above $gU$, the equation defining $\tilde{Y}(w)$ is $\mathcal{L}_Z(z) = 1$ that is $z \in Z^{\Frob_Z}$.
\end{proof}

\begin{corol}
We have a canonical isomorphism 
\begin{equation*}
\RGamma(\tilde{Y}(w), \Lambda)^{Z^{\Frob_Z}} = \RGamma(Y(w), \Lambda).
\end{equation*}
\end{corol}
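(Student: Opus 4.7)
The plan is to deduce this directly from the preceding lemma, which identifies $\tilde{Y}(w) \to Y(w)$ as a $Z^{\Frob_Z}$-torsor. Writing $f : \tilde{Y}(w) \to Y(w)$ for this finite étale Galois cover with group $H := Z^{\Frob_Z}$, the first step is to compute $f_*\Lambda$. Since $f$ is a finite étale $H$-torsor, $f_*\Lambda$ is canonically identified with the constant sheaf $\underline{\Lambda[H]}$ on $Y(w)$ equipped with its natural (regular) $H$-action, so that the $H$-invariant subsheaf is the constant sheaf $\Lambda$.

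Next, I would use the fact that $f$ is finite (in particular proper) so that $f_! = f_*$ and the natural map $\RGamma(Y(w), f_*\Lambda) \to \RGamma(\tilde{Y}(w), \Lambda)$ is an isomorphism, equivariantly for the $H$-action. Combining this with the computation $f_*\Lambda = \Lambda \otimes_\Lambda \Lambda[H]$ (as a constant sheaf with $H$-action on the second factor) yields the $H$-equivariant identification
\begin{equation*}
\RGamma(\tilde{Y}(w), \Lambda) \simeq \RGamma(Y(w), \Lambda) \otimes_\Lambda \Lambda[H].
\end{equation*}

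Finally I would take strict $H$-invariants. Since the $H$-action on the right-hand side is concentrated on the regular representation factor, and $\Lambda[H]^H = \Lambda$ by summing over cosets (i.e., the line spanned by $\sum_{h \in H} h$ is precisely the invariants in the regular representation, which is canonically isomorphic to $\Lambda$ via projection), we obtain the desired identification with $\RGamma(Y(w), \Lambda)$. No step presents a real obstacle: the only mild subtlety is to make sure we use strict (not derived) invariants, which is the correct interpretation because the $H$-action on $f_*\Lambda$ comes from the free geometric action of $H$ on the source and is pointwise free on stalks, so taking invariants is exact on this specific sheaf.
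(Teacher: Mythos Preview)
Your argument has a genuine gap. The assertion that $f_*\Lambda$ is the \emph{constant} sheaf $\underline{\Lambda[H]}$ on $Y(w)$ is incorrect: for a finite \'etale Galois cover with group $H$, the pushforward $f_*\Lambda$ is a local system whose stalks are isomorphic to $\Lambda[H]$, but whose monodromy is the regular representation pulled back along the surjection $\pi_1(Y(w)) \twoheadrightarrow H$ classifying the cover. It is constant only when the torsor is trivial, and there is no reason to expect that here. Consequently your $H$-equivariant splitting $\RGamma(\tilde{Y}(w), \Lambda) \simeq \RGamma(Y(w), \Lambda) \otimes_\Lambda \Lambda[H]$ fails in general, and the final step of reading off the invariants from the second tensor factor is unjustified.

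The paper gives no proof of this corollary, regarding it as immediate from the preceding lemma. What does survive from your outline is the sheaf-level identity $(f_*\Lambda)^H = \Lambda$, coming from the unit $\Lambda \to f_*f^*\Lambda$; the remaining issue is to commute $(-)^H$ with $\RGamma(Y(w),-)$. The clean way is descent: since $H = Z^{\Frob_Z}$ acts freely with quotient $Y(w)$, pullback gives an equivalence between $\DD^b_c(Y(w),\Lambda)$ and $H$-equivariant objects in $\DD^b_c(\tilde Y(w),\Lambda)$, matching the constant sheaves, and global sections on the left correspond to $H$-invariant global sections on the right; since $f$ is finite this yields the claim directly. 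Equivalently, $f_*\Lambda$ is locally free of rank one as a sheaf of $\Lambda[H]$-modules, and this is what one actually needs to justify the commutation --- not global triviality of the local system.
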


Consider the following diagram 

\[\begin{tikzcd}
	{\tilde{G}/U} & {\tilde{G}/UZ^{\Frob_Z}} & {G/U} \\
	{\tilde{G}/U \times \tilde{G}/U} & {\tilde{G}/U \times^Z \tilde{G}/U} & {G/U \times G/U} \\
	& {U \backslash \tilde{G}/U} & {U\backslash G/U}
	\arrow["{a_Z}", from=2-2, to=3-2]
	\arrow["{\tilde{a}}"', from=2-1, to=3-2]
	\arrow["{\pi_1}"', from=2-1, to=2-2]
	\arrow["{\pi_2}"', from=2-2, to=2-3]
	\arrow["a", from=2-3, to=3-3]
	\arrow[from=3-2, to=3-3]
	\arrow["{\id \times \Frob_{G/U}}", from=1-3, to=2-3]
	\arrow["{\id \times\Frob_{\tilde{G}/U}}"', from=1-1, to=2-1]
	\arrow["{\id \times^Z\Frob_{\tilde{G}/U}}"', from=1-2, to=2-2]
	\arrow["{q_1}", from=1-1, to=1-2]
	\arrow["{q_2}", from=1-2, to=1-3]
\end{tikzcd}\]

where, $\tilde{G}/U \times^Z \tilde{G}/U$ is the quotient by the diagonal action of $Z$, the morphisms $a, a_Z$ and $\tilde{a}$ are given by $a(g,h) = g^{-1}h$, the morphisms $\pi_i$ and $q_i$ are the projections. 

\begin{rque}
The two right squares are Cartesian. 
\end{rque}

Recall that the sheaf $K_w$ on $G/U \times G/U$ is defined as $a^* \pi_! \tilde{K}_w^{\circ}$ where $\tilde{K}_w^{\circ}$ is the corresponding Kazhdan-Laumon sheaf for $\tilde{G}$. 

\begin{lem}
We have a canonical isomorphism 
\begin{equation*}
\RGamma_c(G/U, (\id \times \Frob_{G/U})^* K_w) = \RGamma_c(\tilde{G}/U, (\id \times \Frob_{\tilde{G}/U})^* \tilde{K}_w)^{Z^{\Frob_Z}}.
\end{equation*}
\end{lem}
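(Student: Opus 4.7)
The plan is to use the two right-hand Cartesian squares of the diagram to reduce both sides of the desired isomorphism to cohomology of a common sheaf on the intermediate space $\tilde G/U \times^Z \tilde G/U$, and then to exploit the fact that $q_1 : \tilde G/U \to \tilde G/UZ^{\Frob_Z}$ is a $Z^{\Frob_Z}$-torsor to convert one into $Z^{\Frob_Z}$-invariants of the other.

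Concretely, I would introduce $(K_w)_Z := a_Z^{*}\tilde K_w^{\circ} \in \DD_c^b(\tilde G/U \times^Z \tilde G/U, \Lambda)$. Proper base change applied to the lower right Cartesian square (with horizontal arrows $\pi_2$ and the map $U\backslash \tilde G/U \to U\backslash G/U$) turns the definition $K_w = a^{*}\pi_{!}\tilde K_w^{\circ}$ into $K_w \simeq \pi_{2,!}(K_w)_Z$. Proper base change for the upper right Cartesian square then yields
\begin{equation*}
(\id \times \Frob_{G/U})^{*}K_w \simeq q_{2,!}(\id \times^Z \Frob_{\tilde G/U})^{*}(K_w)_Z,
\end{equation*}
so that the left-hand side of the lemma rewrites as $\RGamma_c(\tilde G/UZ^{\Frob_Z}, (\id \times^Z \Frob_{\tilde G/U})^{*}(K_w)_Z)$. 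For the right-hand side, the identity $\tilde a = a_Z \circ \pi_1$ gives $\tilde K_w = \pi_1^{*}(K_w)_Z$, and the factorization $\pi_1 \circ (\id \times \Frob_{\tilde G/U}) = (\id \times^Z \Frob_{\tilde G/U}) \circ q_1$ (which is just commutativity of the left column of the diagram) produces $(\id \times \Frob_{\tilde G/U})^{*}\tilde K_w \simeq q_1^{*}(\id \times^Z \Frob_{\tilde G/U})^{*}(K_w)_Z$.

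It then suffices to invoke the standard torsor argument: since $q_1$ is a $Z^{\Frob_Z}$-torsor for the finite étale group $Z^{\Frob_Z}$, the projection formula gives $q_{1,!}q_1^{*}F \simeq F \otimes q_{1,!}\Lambda$, with $q_{1,!}\Lambda$ the regular representation sheaf $\Lambda[Z^{\Frob_Z}]$; this sheaf is projective as a $\Lambda[Z^{\Frob_Z}]$-module (even in modular characteristic, since $\Lambda[Z^{\Frob_Z}]$ is free over itself) with $Z^{\Frob_Z}$-invariants canonically $\Lambda$. Consequently, $\RGamma_c(\tilde G/U, q_1^{*}F)^{Z^{\Frob_Z}} \simeq \RGamma_c(\tilde G/UZ^{\Frob_Z}, F)$ for every sheaf $F$, which applied to $F = (\id \times^Z \Frob_{\tilde G/U})^{*}(K_w)_Z$ identifies the two sides. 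I do not anticipate any genuine obstacle: the Cartesian-ness of the two right squares is already recorded in the preceding diagram (and ultimately comes from Lang's theorem applied to the connected torus $Z$), and everything else is formal.
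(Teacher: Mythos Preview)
Your proposal is correct and follows essentially the same route as the paper: both arguments pivot on the sheaf $a_Z^{*}\tilde K_w^{\circ}$ on $\tilde G/U \times^Z \tilde G/U$, use proper base change along the two right Cartesian squares, the commutativity $\tilde a = a_Z\circ\pi_1$, and the fact that $q_1$ is a $Z^{\Frob_Z}$-torsor (so that $(q_{1,!}q_1^{*}F)^{Z^{\Frob_Z}} \simeq F$). The only difference is presentational: the paper runs the computation as a single chain of equalities starting from the $\tilde G$-side, whereas you introduce the intermediate object $(K_w)_Z$ and reduce both sides separately to $\RGamma_c(\tilde G/UZ^{\Frob_Z}, (\id\times^Z\Frob)^{*}(K_w)_Z)$.
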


\begin{proof}
We first have 
\begin{align*}
(\id \times \Frob_{\tilde{G}/U})^* \tilde{K}_w &= (\id \times \Frob_{\tilde{G}/U})^* \tilde{a}^*\tilde{K}_w^{\circ} \\
&= q_1^* (\id \times^Z \Frob_{\tilde{G}/U})^* a_Z^* \tilde{K}_w^{\circ}. \\
\end{align*}

Then applying the functor $\RGamma_c(\tilde{G}/U, -)^{Z^{\Frob_Z}} = \RGamma_c(G/U, q_{2,!}q_{1,!} -)^{Z^{\Frob_Z}}$ we get

\begin{align*}
\RGamma_c(\tilde{G}/U, (\id \times \Frob_{\tilde{G}})^* \tilde{K}_w)^{Z^{\Frob_Z}} &= \RGamma_c(G/U, q_{2,!}q_{1,!}q_1^* (\id \times^Z \Frob_{\tilde{G}/U})^* a_Z^* \tilde{K}_w^{\circ})^{Z^{\Frob_Z}} \\
&= \RGamma_c(G/U, q_{2,!}(\id \times^Z \Frob_{\tilde{G}/U})^* a_Z^* \tilde{K}_w^{\circ}) \\
&= \RGamma_c(G/U, (\id \times \Frob_{G/U})^*\pi_{2,!}  a_Z^* \tilde{K}_w^{\circ})\\
&= \RGamma_c(G/U, (\id \times \Frob_{G/U})^*a^*\pi_! \tilde{K}_w^{\circ}) \\
&= \RGamma_c(G/U, (\id \times \Frob_{G/U})^*K_w).
\end{align*}
\end{proof}

\begin{proof}
The proof of theorem \ref{KLvsDL} is now straightforward we have isomorphisms, compatible with the $G^{\Frob_G} \times T^{w\Frob_T}$ actions giving 
\begin{align*}
\RGamma_c(G/U, (\id \times \Frob_{G/U})^* K_w) &= \RGamma_c(\tilde{G}/U, (\id \times \Frob_{\tilde{G}/U})^* \tilde{K}_w)^{Z^{\Frob_Z}}\\
&= \RGamma(\tilde{Y}(w), \Lambda)^{Z^{\Frob_Z}}[\ell(w)] \\
&= \RGamma(Y(w), \Lambda)[\ell(w)].
\end{align*}

\end{proof}

\section{A theorem of Dudas}

We now want to recover the following theorem of Dudas.

\begin{thm}[\cite{Dudas}]\label{thmDudas}
We have a $T^{w\Frob_T}$-linear isomorphism 
\begin{equation*}
e_{\psi} \RGamma(Y(w), \Lambda) = \Lambda[T^{w\Frob_T}][-\ell(w)].
\end{equation*}
\end{thm}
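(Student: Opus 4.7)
The plan is to combine Theorem \ref{KLvsDL} with a geometric description of the $(\bar{U},\psi)$-averaged Kazhdan-Laumon sheaf $\Av_\psi K_w$ on $G/U \times G/U$. Applying the idempotent $e_\psi$ to both sides of Theorem \ref{KLvsDL} reduces the problem to computing $e_\psi \RGamma_c(G/U, (\id \times \Frob_{G/U})^*K_w)$. By lemma \ref{lemAction1}, arithmetic averaging against $e_\psi$ and geometric averaging by $\Av_\psi$ are canonically compatible, so up to a normalization by the number of $\Fq$-points of $\bar{U}$ this cohomology identifies with
\[
\RGamma_c(G/U, (\id \times \Frob_{G/U})^*\Av_\psi K_w).
\]

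The core step is the geometric identification of $\Av_\psi K_w$ after restriction to $T \times T \hookrightarrow G/U \times G/U$, via the open big cell embedding $t \mapsto tU$ in each factor. Following closely the strategy of \cite{BezDesh}, I would stratify $\tilde{\mathcal{Y}}(\underline{w})$ by Bruhat cells on the first factor, push forward along the action map of $\bar{U}$ on $G/U$ while twisting by $\mathcal{L}_\psi$, and use the genericity of $\psi$ together with the Fourier-Deligne transform of \cite{LaumonFourier} to show that all contributions vanish except the one corresponding to the top stratum. This surviving piece produces, up to shift, the constant sheaf on the graph $\Gamma_w = \{(t, w(t)) : t \in T\} \subset T \times T$. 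Pulling back further along the graph of Frobenius $\Gamma_{\Frob_T} \hookrightarrow T \times T$, proper base change reduces the computation to $\RGamma_c(\Gamma_w \cap \Gamma_{\Frob_T}, \Lambda)$; this intersection is a discrete $T^{w\Frob_T}$-torsor, yielding $\Lambda[T^{w\Frob_T}]$ with its regular action.

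Assembling the shifts coming from $\Av_\psi$ (a $[2d_U]$), the intrinsic $[2\ell(w)]$ in the definition of $K_w$, the dimensional shifts from the big cell and Frobenius graph embeddings, and the $[\ell(w)]$ in Theorem \ref{KLvsDL}, one obtains the stated $e_\psi\RGamma(Y(w),\Lambda) \simeq \Lambda[T^{w\Frob_T}][-\ell(w)]$. The main obstacle is the geometric identification of $\Av_\psi K_w|_{T\times T}$ in the second paragraph: although this is the $\ell$-adic analog of Kazhdan's $p$-adic calculation \cite{Kazhdan1995} treated in \cite{BezDesh}, adapting their arguments to the explicit description of $K_w$ via $\phi_{\underline{w}}$ on $\tilde{\mathcal{Y}}(\underline{w})$ requires a careful stratum-by-stratum vanishing argument, and matches the action-by-$w$ on $T$ only after tracking the lift $\dot{w}$ chosen in the definition of the Kazhdan-Laumon sheaves.
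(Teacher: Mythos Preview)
Your overall architecture matches the paper exactly: apply $e_\psi$ to Theorem~\ref{KLvsDL}, trade arithmetic for geometric averaging via Lemma~\ref{lemAction1} (this is the paper's Lemma~\ref{lem2}, and note there is no normalization by $|\bar{U}^{\Frob}|$ --- the identification is on the nose), and then reduce to computing the cohomology of the pullback of $\Av^1_\psi K_w$ along the graph of Frobenius, landing on $\Lambda[T^{w\Frob_T}]$.

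The substantive difference is in how you propose to identify $\Av^1_\psi K_w$ (the paper's Lemma~\ref{lem1}). You suggest stratifying $\tilde{\mathcal{Y}}(\underline{w})$ by Bruhat cells and killing the non-top strata via genericity of $\psi$ and Fourier--Deligne transforms; you flag this yourself as the main obstacle. The paper avoids this entirely. It first observes (Lemma~\ref{lemSupport}) that any $(\bar{U},\psi)\times U$-equivariant sheaf on $G$ is supported on the big cell, so $\Av^1_\psi K_w$ automatically lives on $\bar{U}\times\bar{U}\times T\times T$ and has the form $\mathcal{L}_\psi[2d_U]\boxtimes\mathcal{L}_{\psi^{-1}}\boxtimes A$. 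Then, rather than analysing $A$ for general $w$, it uses the convolution identity $\Av^1_\psi(K_1*K_2)=(\Av^1_\psi K_1)*(\Av^1_\psi K_2)$ (Lemma~\ref{lemConvol2}) together with $i_{w,*}\Lambda * i_{w',*}\Lambda = i_{ww',*}\Lambda$ (Lemma~\ref{lemConvol3}) to reduce to the case of a simple reflection, where the computation is a short explicit $\SL_2$ calculation with the Artin--Schreier sheaf --- no Fourier transform needed. This reduction to rank one is the idea you are missing; it replaces your proposed stratum-by-stratum vanishing with a one-line induction on $\ell(w)$.
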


Firstly we consider the sheaf 

\begin{equation*}
\Av^1_{\psi}K_w
\end{equation*}
on $G/U \times G/U$ where $\Av^1_{\psi}$ is the averaging by $\bar{U}, \psi$ acting on the first coordinate, that is $\Av^1_{\psi}K_w := a_{1,!}(\mathcal{L}_{\psi}[2d_U] \boxtimes K_w)$ where $a_1 : \bar{U} \times G/U \times G/U \rightarrow G/U \times G/U$ is the multiplication on the first coordinate.

We will deduce the theorem out of the following lemmas.

\begin{lem}\label{lem1}
Assume that $G$ has a simply connected derived subgroup. Then the sheaf $\Av^1_{\psi}K_w$ is supported on $\bar{U}TU / U \times \bar{U}TU /U \simeq \bar{U} \times \bar{U} \times T \times T$ and is isomorphic to $\mathcal{L}_{\psi}[2d_U] \boxtimes \mathcal{L}_{\psi^{-1}} \boxtimes i_{w,*} \Lambda$ where $i_w : T \rightarrow T \times T$ is the map $t \mapsto (wt, t)$. 
\end{lem}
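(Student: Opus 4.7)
The plan is to argue by induction on $\ell(w)$, using that $K_w$ arises as the iterated convolution $K_{s_1}*\cdots *K_{s_r}$ for a reduced expression $\underline{w} = s_1\dots s_r$; equivalently, as kernels on $G/U \times G/U$, one has $K_w \simeq K_{w'} \star K_s$ for any reduced factorization $w = w's$, where $\star$ is composition of kernels (cf.\ Lemma~\ref{lemComparison}). Since $\Av^1_\psi$ averages only the first coordinate of $G/U \times G/U$ while $\star K_s$ acts on the middle/second, these two operations commute, giving $\Av^1_\psi K_w \simeq (\Av^1_\psi K_{w'}) \star K_s$.

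For the base case $w = s$, I would compute the stalk of $\Av^1_\psi K_s$ at a point $(\bar{u}_1 t_1 U, \bar{u}_2 t_2 U)$ of $\bar{U}TU/U \times \bar{U}TU/U$. Using the diagonal $G$-invariance of $K_s = a^*K_s^\circ$ together with the right $U$-invariance, the averaging integral over $\bar{U}$ against $\mathcal{L}_\psi$ absorbs $\bar{u}_1$ and $\bar{u}_2$ into Whittaker character factors, reducing to a one-variable Fourier--Deligne transform along the root subgroup $\bar{U}_\alpha \simeq \Ga$ applied to the Artin--Schreier sheaf $(-\phi_\alpha)^*\mathcal{L}_\psi$; Lemma~\ref{lemPhiSL2} controls how $\phi_\alpha$ transforms under left/right $T$-translations, so the resulting Gauss-sum-type integral collapses to the delta sheaf on $\{t_2 = s(t_1)\}$ times $\mathcal{L}_\psi(\bar{u}_1)\boxtimes \mathcal{L}_{\psi^{-1}}(\bar{u}_2)$. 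The support claim in the first coordinate follows from $(\bar{U},\psi)$-equivariance: by the Bruhat decomposition $G/U = \bigsqcup_{v \in W}\bar{U}vTU/U$, only the open orbit ($v = 1$) supports a $(\bar{U},\psi)$-structure, since every other orbit has stabilizer containing a simple root subgroup on which $\psi$ is non-trivial. The support in the second coordinate drops out of the explicit formula.

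For the inductive step, applying the induction hypothesis to $\Av^1_\psi K_{w'}$ and convolving with $K_s$ yields an integral on the middle $\bar{U}$-variable that is formally identical to the base-case calculation (performed on the middle factor with $\psi$ replaced by $\psi^{-1}$, compatibly with the middle $(\bar{U},\psi^{-1})$-equivariance supplied by induction), producing the further shift $t_\mathrm{mid} \mapsto s(t_\mathrm{mid})$; composed with $i_{w',*}\Lambda$ this gives $i_{w's,*}\Lambda = i_{w,*}\Lambda$, while the outer Whittaker factors $\mathcal{L}_\psi \boxtimes \mathcal{L}_{\psi^{-1}}$ are preserved throughout. The main obstacle is the base-case computation: one must carefully track the shift $[2d_U]$ coming from the definition of $\Av^1_\psi$, verify the sign flip from $\psi$ to $\psi^{-1}$ on the second factor (which reflects the intertwining of $\mathcal{L}_\psi$ with $\mathcal{L}_{\psi^{-1}}$ by the $T$-action on $\bar{U}$, controlled by Lemma~\ref{lemPhiSL2}), and confirm a posteriori that the output depends only on $w$ and not on the reduced expression used.
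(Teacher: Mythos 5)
Your argument is correct in substance and follows the same skeleton as the paper: reduce to a simple reflection through the convolution presentation $K_w = K_{s_1}*\dots*K_{s_r}$, and then do the rank-one computation in which the Artin--Schreier sheaf $(-\phi_\alpha)^*\mathcal{L}_\psi$, averaged along $\bar{U}_\alpha \simeq \Ga$ and controlled by Lemma \ref{lemPhiSL2}, collapses to the delta sheaf on the graph of $s$ times the Whittaker factors -- this is exactly the paper's explicit computation for $K_s$. Where you diverge is in how the reduction is packaged: the paper proves the general multiplicativity $\Av^1_{\psi}(K_1 * K_2) \simeq (\Av^1_{\psi}K_1)*(\Av^1_{\psi}K_2)$ for diagonally equivariant kernels (Lemma \ref{lemConvol2}, via the swap between $\Av^1_{\psi}$ and $\Av^2_{\psi^{-1}}$) and the convolution of graph sheaves (Lemma \ref{lemConvol3}), whereas you use only the easy one-sided commutation $\Av^1_{\psi}(K_{w'}*K_s) = (\Av^1_{\psi}K_{w'})*K_s$ and then rerun the rank-one Gauss-sum collapse on the middle variable inside the induction; this works (the middle integral of $\mathcal{L}_{\psi^{-1}}(\bar u_2)$ against $K_s(\bar u_2 t_2 U, -)$ is, after the substitution $\bar u_2 \mapsto \bar u_2^{-1}$, precisely a stalk of the base case), and what you gain in avoiding Lemma \ref{lemConvol2} you pay for by repeating the explicit computation, which is what the paper's two lemmas formalize once and for all. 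Two small corrections to your bookkeeping: the appearance of $\psi^{-1}$ on the second $\bar U$-factor is a consequence of the diagonal $\bar U$-equivariance of $K_w$ (translating the second variable by $\bar v$ can be traded, via the diagonal action, for shifting the averaging variable, producing $\mathcal{L}_{\psi^{-1}}(\bar v)$), not of the $T$-action or Lemma \ref{lemPhiSL2}; and the support statement in the second coordinate is most cleanly obtained by applying the stabilizer argument of Lemma \ref{lemSupport} to that coordinate as well, using this $(\bar U, \psi^{-1})$-equivariance, rather than asserting it ``drops out of the explicit formula,'' since your stalk computation was only carried out over the open cell in both coordinates.
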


\begin{lem}\label{lem2}
We have a $U^{\Frob_U}$-equivariant isomorphism 
\begin{equation*}
\RGamma_c(G/U, (\id \times \Frob_{G/U})^* \Av^1_{\psi} K_w) \simeq e_{\psi}\RGamma_c(G/U, (\id \times \Frob_{G/U})^* K_w).
\end{equation*}
\end{lem}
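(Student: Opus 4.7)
The strategy is a ``geometric versus arithmetic averaging'' argument, converting the integration over $\bar U$ underlying $\Av^1_\psi$ into a sum over $\bar U^{\Frob_{\bar U}}$ defining $e_\psi$, by means of the Lang isogeny $L : \bar U \to \bar U$, $u \mapsto u^{-1}\Frob(u)$, which is a finite étale $\bar U^{\Frob_{\bar U}}$-torsor.

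First I would apply proper base change to the Cartesian square obtained by pulling the action map $a_1 : \bar U \times G/U \times G/U \to G/U \times G/U$ back along $\id \times \Frob_{G/U}$, identifying the LHS with
\[
\RGamma_c(\bar U \times G/U,\; p_1^*\mathcal{L}_\psi[2d_U] \otimes \beta^*K_w),
\]
where $\beta(u, xU) = (xU, \Frob(uxU))$. Using the diagonal $G$-equivariance of $K_w$ (acting by $u^{-1}$) and substituting $y = u^{-1}x$, the integrand is rewritten as $\mathcal{L}_\psi(u)[2d_U]\otimes K_w|_{(yU,\, L(u)\Frob(y)U)}$, so that the Lang map $L$ appears explicitly in the second coordinate.

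Next I invoke Lang descent. The identity $\chi \circ L = (\Frob-1)\circ \chi$ on the abelianization of $\bar U$, combined with $(\Frob-1)^*\mathcal{L}_\psi \simeq \Lambda$ on $\mathbb{A}^1$ (the Artin-Schreier equation), yields a $\bar U^{\Frob_{\bar U}}$-equivariant trivialization $L^*\mathcal{L}_\psi \simeq \Lambda$ with the deck group acting through $\psi$. Pulling back along $L \times \id_{G/U}$ and again using the diagonal $u$-action of $K_w$ straightens the sheaf on the source into $m^*\mathcal{K}$, where $\mathcal{K} = (\id \times \Frob_{G/U})^*K_w$ and $m(u,yU) = uyU$. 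A further change of variables $(u,yU) \mapsto (u, uyU)$ converts $m^*\mathcal{K}$ into $p_2^*\mathcal{K}$; by Kunneth and the triviality of translations on the top compact cohomology of $\bar U$, the $\bar U^{\Frob_{\bar U}}$-invariants prescribed by Lang descent produce the $\psi$-isotypic component $e_\psi\RGamma_c(G/U, \mathcal{K})$. The shifts match since $\RGamma_c(\bar U,\Lambda)[2d_U]$ is $\Lambda$ up to Tate twist.

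The main obstacle is the careful bookkeeping of equivariant structures through the three substitutions, in order to verify that the $\psi$-action coming from the trivialization $L^*\mathcal{L}_\psi \simeq \Lambda$ aligns (up to inversion conventions) with the left-translation action of $\bar U^{\Frob_{\bar U}}$ on $\RGamma_c(G/U, \mathcal{K})$ used to define $e_\psi$. The $U^{\Frob_U}$-equivariance of the isomorphism is then automatic, as none of the substitutions affects the residual left $U^{\Frob_U}$-action on $G/U$ coming from the $G^{\Frob_G}$-equivariance preserved throughout.
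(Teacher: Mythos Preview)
Your strategy --- convert geometric $\bar U$-averaging into arithmetic $\bar U^{\Frob}$-averaging via the Lang isogeny --- is exactly the idea behind the paper's Lemma~\ref{lemAction1}, and the outline is essentially correct.

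There is, however, a bookkeeping slip in your second paragraph. After acting diagonally by $u^{-1}$ on $K_w|_{(x,\Frob(ux))}$ one obtains $K_w|_{(u^{-1}x,\,L(u)\Frob(x))}$; substituting $y=u^{-1}x$ (so $x=uy$) gives $K_w|_{(y,\,L(u)\Frob(uy))}$, not $K_w|_{(y,\,L(u)\Frob(y))}$. With the erroneous expression the pullback along $L\times\id$ does not straighten to $m^*\mathcal K$ as you claim. The fix is simple: work instead with the coordinate $z=ux$ on $G/U$ (the one coming directly from the Cartesian square), and act by $u$ rather than $u^{-1}$, so that $K_w|_{(u^{-1}z,\Frob(z))}\simeq K_w|_{(z,\,u\Frob(z))}$. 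Pulling back along $L\times\id$ then gives $K_w|_{(z,\,L(v)\Frob(z))}=K_w|_{(z,\,v^{-1}\Frob(vz))}$, and a second diagonal action by $v$ yields $\mathcal K|_{vz}=m^*\mathcal K$. Your remaining steps (the shear $(v,z)\mapsto(v,vz)$, K\"unneth, triviality of translations on $H^{2d_U}_c(\bar U,\Lambda)$, and extraction of the $\psi$-isotypic component) then go through.

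Where your argument genuinely departs from the paper is in scope. The paper does not run the Lang argument on $G/U$ directly: its Lemma~\ref{lemAction1} is formulated only for product spaces $X=Y\times H$ with $H$ acting freely, so the paper first invokes the support statement of Lemma~\ref{lem1} to restrict the left-hand side to the big cell $\bar U T U/U\simeq T\times\bar U$, applies Lemma~\ref{lemAction1} there, and then uses genericity of $\psi$ over the Bruhat stratification to show that the remaining strata contribute nothing to $e_\psi\RGamma_c(G/U,(\id\times\Frob)^*K_w)$. Your route bypasses both the support reduction and the stratification step, hence does not depend on Lemma~\ref{lem1} at all; the price is that the equivariance bookkeeping must be carried out on the non-free $\bar U$-space $G/U$, which is precisely where your substitution went astray. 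Once corrected as above, your approach gives a more self-contained proof of the lemma.
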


\begin{proof}[Proof of theorem 	\ref{thmDudas}]

Firstly assume that $G$ has a simply connected derived subgroup and consider the following Cartesian diagram.

\[\begin{tikzcd}
	T & {T \times T} \\
	{T^{wF}} & T
	\arrow["{\id \times \Frob_T}"', from=2-2, to=1-2]
	\arrow["{i_w}", from=1-1, to=1-2]
	\arrow["i", from=2-1, to=1-1]
	\arrow["{i'}"', from=2-1, to=2-2]
\end{tikzcd}\]

From this we get that $\RGamma_c(T, (\id \times \Frob_T)^*i_{w,*}\Lambda) = \RGamma_c(T^{w\Frob_T}, \Lambda) = \Lambda[T^{w\Frob_T}]$. Theorem \ref{thmDudas} now follows from 

\begin{align*}
&e_{\psi} \RGamma(Y(w), \Lambda) \\
&= e_{\psi} \RGamma_c(G/U, (\id \times \Frob_{G/U})^*K_w)[-\ell(w)] \\
&= \RGamma_c(G/U, (\id \times \Frob_{G/U})^* \Av_{\psi} K_w)[-\ell(w)] \\
&= \RGamma_c(\bar{U} \times T , (\id \times \Frob_{\bar{U}})^* (\mathcal{L}_{\psi} \boxtimes \mathcal{L}_{\psi^{-1}}[2d_U]) \boxtimes (\id \times \Frob_{T})^* i_{w,*}\Lambda)[-\ell(w)] \\
&= \RGamma_c(\bar{U}, (\id \times \Frob_{\bar{U}})^*(\mathcal{L}_{\psi} \boxtimes \mathcal{L}_{\psi^{-1}}[2d_U])) \otimes \RGamma_c(T, \id \times \Frob_{T})^* i_{w,*}\Lambda)[-\ell(w)] \\
&= \Lambda[T^{w\Frob_T}][-\ell(w)].
\end{align*}

The first line is theorem \ref{KLvsDL}, the second one is lemma \ref{lem2}, the third one is lemma \ref{lem1} and the fourth one comes from the Kunneth formula and the identification $(\id \times \Frob_{\bar{U}})^*(\mathcal{L}_{\psi} \boxtimes \mathcal{L}_{\psi^{-1}}[2d_U])) \simeq \mathcal{L}_{\psi} \otimes \Frob_{\bar{U}}^*\mathcal{L}_{\psi^{-1}}[2d_U] \simeq \mathcal{L}_{\psi} \otimes \mathcal{L}_{\psi^{-1}}[2d_U] \simeq \Lambda[2d_U]$. For the general case, fix a central extension $\tilde{G} \rightarrow G$ by a connected torus such that $\tilde{G}$ has simply connected derived subgroup and then argue as in theorem \ref{KLvsDL}.
\end{proof}

\subsection*{Averaging $K_w$ by $\psi$.}

We now prove lemma \ref{lem1}.

\begin{lem}\label{lemConvol}
Let $X$ be a scheme and $K_1, K_2$ be two sheaves on $X \times X$. Recall that we have two functors $[K_i] : \DD_c^b(X, \Lambda) \rightarrow \DD_c^b(X, \Lambda)$. There is a canonical isomorphism of functors $[K_2] \circ [K_1] = [K_3]$ where $K_3 = \pr_{13,!}(\pr_{12}^* K_1 \otimes \pr_{23}^*K_2)$ and $\pr_{12},\pr_{23},\pr_{13} : X \times X \times X \rightarrow X \times X$ are the corresponding projections. We usually denote $K_3 = K_1 * K_2$.
\end{lem}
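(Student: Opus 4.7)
The plan is to unfold both sides and reduce the identity to a formal manipulation using proper base change and the projection formula, along the lines of the proof of Lemma \ref{lemComparison}. Let $p_1, p_2 : X \times X \to X$ be the two projections and let $q_1, q_2, q_3 : X \times X \times X \to X$ be the three projections; note $p_1 \circ \pr_{12} = p_1 \circ \pr_{13} = q_1$, $p_2 \circ \pr_{13} = p_2 \circ \pr_{23} = q_3$, and $p_1 \circ \pr_{23} = p_2 \circ \pr_{12} = q_2$. For $A \in \DD_c^b(X,\Lambda)$, I would start from
\begin{equation*}
[K_2]([K_1](A)) = p_{2,!}\bigl(K_2 \otimes p_1^* p_{2,!}(K_1 \otimes p_1^* A)\bigr).
\end{equation*}

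Next, I would apply proper base change to the Cartesian square
\[\begin{tikzcd}
X \times X \times X \arrow[r, "{\pr_{23}}"] \arrow[d, "{\pr_{12}}"'] & X \times X \arrow[d, "{p_1}"] \\
X \times X \arrow[r, "{p_2}"'] & X
\end{tikzcd}\]
to rewrite $p_1^* p_{2,!} = \pr_{23,!}\, \pr_{12}^*$. Using the identity $\pr_{12}^* p_1^* = q_1^*$ this gives
\begin{equation*}
[K_2]([K_1](A)) = p_{2,!}\bigl(K_2 \otimes \pr_{23,!}(\pr_{12}^* K_1 \otimes q_1^* A)\bigr).
\end{equation*}

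Then I would apply the projection formula for $\pr_{23}$ to bring $K_2$ inside, obtaining
\begin{equation*}
[K_2]([K_1](A)) = p_{2,!}\,\pr_{23,!}\bigl(\pr_{23}^* K_2 \otimes \pr_{12}^* K_1 \otimes q_1^* A\bigr).
\end{equation*}
Using $p_2 \circ \pr_{23} = q_3 = p_2 \circ \pr_{13}$ to rewrite $p_{2,!}\,\pr_{23,!} = p_{2,!}\,\pr_{13,!}$, and then applying the projection formula for $\pr_{13}$ (with $q_1^* A = \pr_{13}^* p_1^* A$) to pull $p_1^* A$ outside, I obtain
\begin{equation*}
[K_2]([K_1](A)) = p_{2,!}\bigl(\pr_{13,!}(\pr_{12}^* K_1 \otimes \pr_{23}^* K_2) \otimes p_1^* A\bigr) = [K_3](A).
\end{equation*}

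There is no real obstacle here: every step is a standard six-functor identity (proper base change, projection formula, functoriality of $f_!$). The only thing to be careful about is keeping track of which projections commute with which, which is why I would fix the notation $q_1, q_2, q_3, \pr_{ij}$ at the start and check the three identities $p_1 \pr_{12} = q_1$, $p_2 \pr_{23} = q_3$, $p_1 \pr_{13} = q_1$ before starting the computation.
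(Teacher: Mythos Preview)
Your proof is correct and follows essentially the same approach as the paper: the paper also unfolds the definitions, applies proper base change to the same Cartesian square (with $\pr_{23}$, $\pr_{12}$, and the two projections to $X$), uses the projection formula to absorb $K_2$, rewrites $p_{2,!}\pr_{23,!}$ as $p_{2,!}\pr_{13,!}$, and applies the projection formula once more to extract $p_1^*A$. The only difference is notational---the paper labels the projections from the two copies of $X\times X$ as $p_1,p_2$ and $q_1,q_2$ rather than reusing $p_1,p_2$ for both---but the argument is step-for-step the same.
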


\begin{proof}
Consider the following diagram.

\[\begin{tikzcd}
	{X \times X \times X} & {X \times X} & X \\
	{X \times X} & X \\
	X
	\arrow["{\pr_{23}}", from=1-1, to=1-2]
	\arrow["{\pr_{12}}"', from=1-1, to=2-1]
	\arrow["{p_1}", from=1-2, to=2-2]
	\arrow["{q_2}"', from=2-1, to=2-2]
	\arrow["{p_2}", from=1-2, to=1-3]
	\arrow["{q_1}"', from=2-1, to=3-1]
\end{tikzcd}\]

The square is Cartesian, the maps $p_1,p_2,q_1,q_2$ are all projections. Let $A$ be a sheaf on $X$,

\begin{align*}
[K_2]\circ [K_1](A) &= p_{2,!}(K_2 \otimes p_1^* ([K_1]A)) \\
&= p_{2,!}(K_2 \otimes p_1^*q_{2,!}(K_1 \otimes q_1^* A)) \\
&= p_{2,!}(K_2 \otimes \pr_{23,!}\pr_{12}^*(K_1 \otimes q_1^* A)) \\
&= p_{2,!}\pr_{23,!} (\pr_{23}^* K_2 \otimes \pr_{12}^*K_1 \otimes \pr_{12}^*q_1^* A) \\
&= p_{2,!}(\pr_{13,!}(\pr_{23}^* K_2 \otimes \pr_{12}^*K_1) \otimes q_1^* A) \\
&= [K_1 * K_2](A).
\end{align*}

The first two lines are by definition, the third one is proper base change in the above diagram, the fourth one is the projection formula, the fifth one comes from the equalities $p_2\pr_{23} = p_2\pr_{13}$ and $q_1\pr_{12} = q_1\pr_{13}$ and the sixth one is again the projection formula. 
\end{proof}

\begin{lem}\label{lemConvol2}
Let $X$ be a scheme with an action of $U$ and let $K_1, K_2$ be two sheaves on $X \times X$ that are $U$-equivariant for the diagonal action of $U$. Then we have a canonical isomorphism 
\begin{equation*}
(\Av_{\psi}^1K_1) * (\Av_{\psi} K_2) = \Av_{\psi}^1 (K_1 * K_2).
\end{equation*}
\end{lem}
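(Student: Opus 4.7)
The strategy is to unfold both sides of the claimed identity as iterated pushforward--tensor expressions and match them via proper base change and the projection formula. Throughout, I will use three copies $X_1,X_2,X_3$ of $X$ and label the projections from $X_1\times X_2\times X_3$ by $\pr_{ij}$ as in Lemma \ref{lemConvol}.

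First, I would unfold the right-hand side. By Lemma \ref{lemConvol}, $K_1*K_2=\pr_{13,!}(\pr_{12}^*K_1\otimes \pr_{23}^*K_2)$, and by definition $\Av_\psi^1(K_1*K_2)=a_{1,!}(\mathcal{L}_\psi[2d_U]\boxtimes (K_1*K_2))$. Consider the Cartesian square
\[
\begin{tikzcd}
\bar U\times X_1\times X_2\times X_3\arrow[r,"\tilde\pr_{13}"]\arrow[d,"\rho"'] & \bar U\times X_1\times X_3\arrow[d,"\id\times\pr_{13}"] \\
X_1\times X_2\times X_3\arrow[r,"\pr_{13}"] & X_1\times X_3
\end{tikzcd}
\]
Proper base change lets me rewrite $\mathcal{L}_\psi\boxtimes(K_1*K_2)$ as a pushforward from $\bar U\times X_1\times X_2\times X_3$, and the projection formula pulls $\mathcal{L}_\psi$ inside. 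Composing with $a_1$, the right-hand side becomes $(a_1\circ\tilde\pr_{13})_!\bigl(\mathcal{L}_\psi\boxtimes\pr_{12}^*K_1\otimes \pr_{23}^*K_2\bigr)$, where $a_1\circ\tilde\pr_{13}:(u,x_1,x_2,x_3)\mapsto(ux_1,x_3)$.

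Second, I would unfold the left-hand side in the same spirit, but using two copies $\bar U_1,\bar U_2$ of $\bar U$ coming from the two averagings. Writing $\Av_\psi^1K_1$ and $\Av_\psi K_2$ as pushforwards from $\bar U_i\times X\times X$, and applying proper base change to commute $\pr_{12}^*$ and $\pr_{23}^*$ past these pushforwards, then the projection formula to combine the two Artin--Schreier sheaves, the left-hand side becomes a pushforward from $\bar U_1\times\bar U_2\times X_1\times X_2\times X_3$ of an expression of the form $\mathcal{L}_\psi(u_1u_2)\otimes K_1(u_1x_1,x_2)\otimes K_2(u_2x_2,x_3)$, pushed along $(u_1,u_2,x_1,x_2,x_3)\mapsto(u_1x_1,x_3)$.

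Third, I would match the two expressions by performing the change of variables $x_2\mapsto u_2^{-1}x_2$ on the middle coordinate. This is an automorphism of $\bar U_1\times\bar U_2\times X_1\times X_2\times X_3$, and after applying it the integrand becomes $\mathcal{L}_\psi(u_1u_2)\otimes K_1(u_1x_1,u_2^{-1}x_2)\otimes K_2(x_2,x_3)$. At this point the $U$-equivariance of $K_1$ and $K_2$ for the diagonal action enters: it allows me to factor the remaining $u_2$-dependence in $K_1$ into a translation that is absorbed into the $u_1$-integration, after which the $u_2$-integration produces a delta-type contribution that cancels against the $\mathcal{L}_\psi(u_2)$ factor, leaving exactly the right-hand side expression derived in the first step.

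The main obstacle is the last step: the bookkeeping of the two $\bar U$-integrations relative to the middle-coordinate integration requires carefully tracking the $U$-equivariance of the $K_i$ and verifying that the change of variables does not introduce spurious Jacobians or shifts. Once the matching is established on the level of integrands, the identity follows by functoriality of proper base change and the projection formula, with no further geometric input.
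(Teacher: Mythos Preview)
Your direct-unfolding strategy can be made to work, but the description of the decisive third step is not right. After the substitution $x_2\mapsto u_2^{-1}x_2$ and the use of diagonal equivariance of $K_1$, the integrand becomes (in your informal notation) $\mathcal{L}_\psi(u_1)\mathcal{L}_\psi(u_2)\otimes K_1((u_1u_2)^{-1}x_1,x_2)\otimes K_2(x_2,x_3)$. A further substitution $v=u_1u_2$ merges the two Artin--Schreier factors into $\mathcal{L}_\psi(v)$ and leaves the integrand independent of the remaining free $U$-variable. The integration over that variable contributes $\RGamma_c(U,\Lambda)\simeq\Lambda[-2d_U]$, which precisely cancels the extra $[2d_U]$ shift carried by the second averaging. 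There is no ``delta-type contribution'' and nothing ``cancels against the $\mathcal{L}_\psi(u_2)$ factor'': integrating $\mathcal{L}_\psi$ alone over $U$ would give zero, not a delta. The actual mechanism is that the two characters combine, and what remains is an honest integration of the constant sheaf over an affine space. This is the bookkeeping your final paragraph flags as the obstacle, and it should be stated correctly.

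The paper organises the same content differently and more transparently. It introduces the companion functor $\Av_{\psi^{-1}}^2$ (averaging by $\psi^{-1}$ on the \emph{second} factor) and proves two independent facts: first, purely by base change and with no equivariance hypothesis, $\Av_\psi^1(K_1*K_2)=(\Av_\psi^1K_1)*K_2$ and $\Av_{\psi^{-1}}^2(K_1*K_2)=K_1*(\Av_{\psi^{-1}}^2K_2)$; second, for diagonally $U$-equivariant $K$, one has $\Av_\psi^1K=\Av_{\psi^{-1}}^2\Av_\psi^1K=\Av_{\psi^{-1}}^2K$, and it is in this step that the contraction $\RGamma_c(U,\Lambda)[2d_U]\simeq\Lambda$ appears. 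Chaining these yields $\Av_\psi^1(K_1*K_2)=\Av_{\psi^{-1}}^2\Av_\psi^1(K_1*K_2)=(\Av_\psi^1K_1)*(\Av_{\psi^{-1}}^2K_2)=(\Av_\psi^1K_1)*(\Av_\psi^1K_2)$. Compared with your approach, the paper's decomposition isolates exactly where equivariance is used and avoids the simultaneous two-variable substitution; your route is a single direct computation that reaches the same endpoint once the final step is correctly identified.
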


\begin{proof}
We introduce the functor $\Av_{\psi^{-1}}^2$ defined as
\begin{equation}
\Av_{\psi^{-1}}^2(-) = a_{2,!} (\mathcal{L}_{\psi^{-1}}[2d_{U}] \boxtimes -),
\end{equation}
where $a : U \times X \times X \rightarrow X \times X$ is the action on the second copy of $X$. 
Next consider the diagram 
\[\begin{tikzcd}
	{U \times X \times X} & {X \times X} \\
	{U \times X \times X \times X} & {X \times X \times X} & {X \times X} \\
	{U \times X \times X} & {X \times X.} \\
	{X \times X}
	\arrow["{\pr_{12}}", from=2-2, to=3-2]
	\arrow["\pr"', from=3-1, to=4-1]
	\arrow["{a_1}"', from=3-1, to=3-2]
	\arrow["{\id_U \times \pr_{12}}"', from=2-1, to=3-1]
	\arrow["{a_1 \times \id_{X \times X}}"', from=2-1, to=2-2]
	\arrow["{\pr_{23}}", from=2-2, to=2-3]
	\arrow["{\pr_{13}}"', from=2-2, to=1-2]
	\arrow["{a_1}", from=1-1, to=1-2]
	\arrow["{\id_U \times \pr_{13}}", from=2-1, to=1-1]
\end{tikzcd}\]

By proper base change we get an isomorphism $\Av_{\psi}^1(K_1 * K_2) = \Av_{\psi}^1(K_1) * K_2$ as follow
\begin{align*}
\Av_{\psi}^1(K_1) * K_2 &= \pr_{13,!}(\pr_{12}^*a_{1,!}(\mathcal{L}_{\psi}[2d_U] \boxtimes K_1) \otimes \pr_{23}^*K_2) \\
&= \pr_{13,!}((a_1 \times \id_{X \times X})_!(\id_U \times \pr_{12})^*(\mathcal{L}_{\psi}[2d_U] \boxtimes K_1) \otimes \pr_{23}^*K_2) \\
&= \pr_{13,!}(a_1 \times \id_{X \times X})_!((\id_U \times \pr_{12})^*(\mathcal{L}_{\psi}[2d_U] \boxtimes K_1) \otimes (a_1 \times \id_{X \times X})^*\pr_{23}^*K_2) \\
&= a_{1,!}(\id_U \times \pr_{13})_!((\id_U \times \pr_{12})^*(\mathcal{L}_{\psi}[2d_U] \boxtimes K_1) \otimes (a_1 \times \id_{X \times X})^*\pr_{23}^*K_2) \\
&= a_{1,!}(\id_U \times \pr_{13})_!((\mathcal{L}_{\psi}[2d_U]) \boxtimes (\pr_{12}^*K_1 \otimes \pr_{23}^*K_2)) \\
&= \Av_{\psi}^1(K_1 * K_2).
\end{align*}
Similarly we get a canonical isomorphism $\Av_{\psi^{-1}}^2(K_1 * K_2) = K_1 * \Av_{\psi^{-1}}^2K_2$. 

Now, for $K$ on $X \times X$ which is diagonally $U$-equivariant we have canonical isomorphisms
\begin{equation}
\Av_{\psi}^1K = \Av_{\psi^{-1}}^2\Av_{\psi}^1K = \Av_{\psi}^1\Av_{\psi^{-1}}^2K = \Av_{\psi^{-1}}^2K.
\end{equation}
We only construct the first two. For the first one consider the following diagram 
\[\begin{tikzcd}
	{U \times U \times X \times X} & {U \times U \times X \times X} & {U \times X \times X} \\
	& {X \times X }
	\arrow["{a_{12}}"', from=1-1, to=2-2]
	\arrow["{a_{1\Delta}}", from=1-2, to=2-2]
	\arrow["{\alpha\times\id_{X \times X}}", from=1-1, to=1-2]
	\arrow["{a_1}", from=1-3, to=2-2]
	\arrow["{\tilde{a}_{\Delta}}", from=1-2, to=1-3]
\end{tikzcd}\]
where $\alpha : U \times U \rightarrow U \times U$ is the map $(u,v) \mapsto (uv^{-1}, v)$, $a_{1\Delta}(u,v,x,y) = (uvx, vy)$ and $\tilde{a}_{\Delta}(u,v,x,y) = (u,vx,vy)$. Note that $\alpha_!(\mathcal{L}_{\psi}  \boxtimes \mathcal{L}_{\psi^{-1}}) = \mathcal{L}_{\psi}  \boxtimes \Lambda_U$. It then follows that 
\begin{align*}
a_{12,!}(\mathcal{L}_{\psi}  \boxtimes \mathcal{L}_{\psi^{-1}} \boxtimes K)[4d_U] &= a_{1\Delta,!}(\mathcal{L}_{\psi}  \boxtimes \Lambda_U \boxtimes K)[4d_U] \\
&= a_{1,!}\tilde{a}_{\Delta,!}(\mathcal{L}_{\psi} \boxtimes \Lambda_U \boxtimes K)[4d_U] \\
&= a_{1,!}(\mathcal{L}_{\psi} \boxtimes K)[2d_U].
\end{align*} 
In the last line, we have used that $K$ was diagonally $U$-equivariant. 

For the second isomorphism, consider 
\[\begin{tikzcd}
	{U \times U \times X \times X } & {U \times X \times X} \\
	{U \times X \times X} & {X \times X} \\
	&& {X \times X}
	\arrow["{\tilde{a}_1}", from=1-1, to=1-2]
	\arrow["{a_2}"{description}, from=1-2, to=2-2]
	\arrow["{\tilde{a}_2}"', from=1-1, to=2-1]
	\arrow["{a_1}"{description}, from=2-1, to=2-2]
	\arrow["\pr"', from=2-1, to=3-3]
	\arrow["\pr", from=1-2, to=3-3]
	\arrow["{a_{12}}"{description}, from=1-1, to=2-2]
\end{tikzcd}\]
where the square is Cartesian. We now have 
\begin{align*}
\Av_{\psi^{-1}}^2\Av_{\psi}^1(K) &= a_{2,!}(\mathcal{L}_{\psi^{-1}}[2d_U] \boxtimes \Av_{\psi}^1(K)) \\
&= a_{2,!}(\mathcal{L}_{\psi^{-1}}[2d_U] \boxtimes (a_{1,!}\mathcal{L}_{\psi}[2d_U] \boxtimes K)) \\
&= a_{12,!}( \mathcal{L}_{\psi}[2d_U] \boxtimes \mathcal{L}_{\psi^{-1}}[2d_U] \boxtimes K) \\
&= \Av_{\psi}^1\Av_{\psi^{-1}}^2(K)
\end{align*}

Noting that $K_1 * K_2$ is diagonally $U$-equivariant if both $K_1$ and $K_2$ are diagonally $U$-equivariant, we can combine the previous isomorphisms into
\begin{align*}
\Av_{\psi}^1(K_1 * K_2) &= \Av_{\psi^{-1}}^2\Av_{\psi}^1(K_1 * K_2) \\
&= (\Av_{\psi}^1K_1) * (\Av_{\psi^{-1}}^2K_2) \\
&= (\Av_{\psi}^1K_1) * (\Av_{\psi}^1K_2).
\end{align*}
\end{proof}

\begin{lem}\label{lemConvol3}
Let $w,w' \in W$. Then we have an isomorphism 
\begin{equation*}
i_{w,*}\Lambda * i_{w',*} \Lambda = i_{ww',*} \Lambda.
\end{equation*}
\end{lem}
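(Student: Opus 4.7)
The plan is to unwind the definition of convolution and compute each factor explicitly, exploiting that all the subschemes involved are closed.

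First I would pull the two kernels back to $T^3$ under the projections $\pr_{12}, \pr_{23} : T \times T \times T \to T \times T$. Since $i_w$ and $i_{w'}$ are closed immersions, proper base change (applied to the Cartesian squares obtained by pulling $i_w$ back along $\pr_{12}$, and $i_{w'}$ along $\pr_{23}$) identifies $\pr_{12}^* i_{w,*}\Lambda$ with $j_{1,*}\Lambda$ where $j_1 : Z_1 \hookrightarrow T^3$ is the inclusion of
\begin{equation*}
Z_1 = \{(a,b,c) \in T^3 : a = wb\},
\end{equation*}
and similarly $\pr_{23}^* i_{w',*}\Lambda = j_{2,*}\Lambda$ where $j_2 : Z_2 \hookrightarrow T^3$ is the inclusion of $Z_2 = \{(a,b,c) \in T^3 : b = w'c\}$.

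Next I would compute the tensor product $j_{1,*}\Lambda \otimes j_{2,*}\Lambda$. Since both factors are pushforwards of the constant sheaf from closed subschemes, the tensor product is supported on $Z_1 \cap Z_2$, and a stalkwise computation identifies it with $j_{3,*}\Lambda$, where $j_3 : Z_3 = Z_1 \cap Z_2 \hookrightarrow T^3$. The intersection $Z_3$ is the closed subscheme
\begin{equation*}
Z_3 = \{(ww'c, w'c, c) : c \in T\},
\end{equation*}
and the parametrization $c \mapsto (ww'c, w'c, c)$ identifies $Z_3$ with $T$.

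Finally, I would apply $\pr_{13,!}$. Under the identification of $Z_3$ with $T$, the composite $\pr_{13} \circ j_3 : T \to T \times T$ sends $c$ to $(ww'c, c)$, which is precisely $i_{ww'}$. Since $j_3$ and $i_{ww'}$ are closed immersions (hence proper) and $i_{ww',*} = i_{ww',!}$, we get
\begin{equation*}
\pr_{13,!}\,j_{3,!}\Lambda = (\pr_{13} \circ j_3)_! \Lambda = i_{ww',!}\Lambda = i_{ww',*}\Lambda,
\end{equation*}
which gives the desired identification. There is essentially no obstacle here beyond bookkeeping: the only subtle point is verifying that the tensor product of $j_{1,*}\Lambda$ and $j_{2,*}\Lambda$ has no higher Tor contributions, but this follows because the constant sheaf $\Lambda$ on a closed subscheme is flat and the relevant restrictions are constant.
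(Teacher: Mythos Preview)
Your proof is correct and follows essentially the same route as the paper. The paper identifies $\pr_{12}^* i_{w,!}\Lambda = (i_w \times \id)_!\Lambda$ and $\pr_{23}^* i_{w',!}\Lambda = (\id \times i_{w'})_!\Lambda$ (your $j_{1,*}\Lambda$ and $j_{2,*}\Lambda$), then computes their tensor product via the projection formula together with proper base change along an explicit Cartesian square, obtaining $(i_w \times \id)_! i_{w',!}\Lambda$; this is exactly your $j_{3,*}\Lambda$, and the final step $\pr_{13}\circ(i_w\times\id)\circ i_{w'}=i_{ww'}$ is the same as your $\pr_{13}\circ j_3 = i_{ww'}$. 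The only cosmetic difference is that you handle the tensor product by a direct stalkwise argument rather than invoking the projection formula, which is an equally valid (and perhaps more transparent) way to see that the tensor of two constant sheaves extended by zero from closed subschemes is the constant sheaf extended by zero from their intersection.
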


\begin{proof}
We first note that $\pr_{12}^*i_{w,!}\Lambda = (i_w \times \id)_!\Lambda$ and  $\pr_{23}^*i_{w',!}\Lambda = (\id \times i_{w'})_!\Lambda$. Now we have $(i_w \times \id)_!\Lambda \otimes (\id \times i_{w'})_!\Lambda = (i_w \times \id)_!(i_w \times \id)^*(\id \times i_{w'})_!\Lambda$. We have a Cartesian diagram 

\[\begin{tikzcd}
	T & {T \times T} \\
	{T \times T} & {T \times T \times T}
	\arrow["{i_w \times\id}"', from=2-1, to=2-2]
	\arrow["{\id \times i_{w'}}", from=1-2, to=2-2]
	\arrow["{i_{ww'}}", from=1-1, to=1-2]
	\arrow["{i_{w'}}"', from=1-1, to=2-1]
\end{tikzcd}\]
giving by proper base change $(i_w \times \id)_!(i_w \times \id)^*(\id \times i_{w'})_!\Lambda = (i_w \times \id)_! i_{w',!}\Lambda$ finally we get that $\pr_{13,!}(\pr_{12}^*i_{w,!}\Lambda \otimes \pr_{23}^*i_{w',!}\Lambda) = \pr_{13,!}(i_w \times \id)_! i_{w',!}\Lambda = i_{ww',!}\Lambda$ since $\pr_{13}(i_w \times \id)i_{w'} = i_{ww'}$. 
\end{proof}

\begin{lem}\label{lemSupport}
Let $F$ be a sheaf on $G$ that is $(\bar{U}, \psi)$-equivariant on the left and $U$-equivariant on the right. Then $F$ is supported on $\bar{U}TU$. 
\end{lem}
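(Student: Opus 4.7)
The plan is to combine the two equivariances with the opposite Bruhat decomposition $G = \bigsqcup_{w \in W} \bar{U}T\dot{w}U$, whose open (``big'') cell $\bar{U}TU$ corresponds to $w = e$. It therefore suffices to show that $F$ vanishes on the cell $C_w := \bar{U}T\dot{w}U$ for every $w \neq e$. Since a point of $C_w$ is of the form $\bar{u}\,t\,\dot{w}\,u$, the left $(\bar{U},\psi)$-equivariance and right $U$-equivariance together reduce this to showing that the stalk complex $F_{t\dot{w}} := i_{t\dot{w}}^* F$ vanishes for every $t \in T$ and every $w \neq e$.

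For such a $w$, the standard fact that $w^{-1}$ sends some simple root to a negative root (visible from a reduced expression of $w^{-1}$) furnishes $\alpha \in \Delta$ with $w^{-1}(\alpha) \in \Phi^-$; set $\gamma := -w^{-1}(\alpha) \in \Phi^+$, so that $w(\gamma) = -\alpha$. Conjugation by $\dot{w}$ then identifies the root subgroup $U_\gamma \subset U$ with $\bar{U}_{-\alpha} \subset \bar{U}$, and since $T$ normalizes root subgroups, the map
\[
\phi : U_\gamma \longrightarrow \bar{U}_{-\alpha}, \qquad u \mapsto t\dot{w}\,u\,\dot{w}^{-1}t^{-1},
\]
is an isomorphism of one-dimensional additive groups satisfying the crucial identity $\phi(u)\cdot t\dot{w} = t\dot{w}\cdot u$ in $G$.

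The decisive step is to pull $F$ back along $\iota : U_\gamma \to G$, $u \mapsto t\dot{w}u = \phi(u)t\dot{w}$, in two different ways. Factoring $\iota$ through the right-multiplication map $G \times U \to G$ and using right $U$-equivariance gives $\iota^* F \simeq \underline{F_{t\dot{w}}}$, the constant sheaf on $U_\gamma$ with value $F_{t\dot{w}}$. Factoring $\iota$ through the left-multiplication map $\bar{U} \times G \to G$ via $u \mapsto (\phi(u), t\dot{w})$ and using left $(\bar{U},\psi)$-equivariance instead gives $\iota^* F \simeq \phi^* \mathcal{L}_{\psi} \otimes^L \underline{F_{t\dot{w}}}$. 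Combining,
\[
\underline{F_{t\dot{w}}} \;\simeq\; \phi^* \mathcal{L}_{\psi} \otimes^L \underline{F_{t\dot{w}}} \quad \text{in } \DD_c^b(U_\gamma, \Lambda).
\]
Because $\alpha$ is simple, the character $\chi$ defining $\mathcal{L}_{\psi}$ restricts nontrivially to $\bar{U}_{-\alpha}$ (it is exactly the $\alpha$-coordinate of $\bar{U}^{ab}$), so $\phi^*\mathcal{L}_{\psi}$ is a nontrivial Artin--Schreier sheaf on $U_\gamma \cong \Ga$. Applying $\RGamma_c(U_\gamma, -)$ and using $\RGamma_c(\A^1, \phi^*\mathcal{L}_{\psi}) = 0$ while $\RGamma_c(\A^1, \Lambda) \neq 0$ then forces $F_{t\dot{w}} = 0$.

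The only genuine obstacle is the root-theoretic point of arranging $w(\gamma) = -\alpha$ with $\alpha$ \emph{simple}: this simplicity is precisely what guarantees nontriviality of $\mathcal{L}_{\psi}$ on the relevant root subgroup and thereby triggers the Artin--Schreier vanishing. Once this choice is in place, everything else is bookkeeping of two compatible factorizations of the single map $\iota$.
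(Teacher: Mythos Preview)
Your proof is correct and is essentially the same argument as the paper's, just spelled out in full. The paper phrases it as ``$F_{|\bar{U}gU}$ is nonzero only if $\psi$ is trivial on $\bar{U}\cap{}^{g}U$, only if $\bar{U}\cap{}^{g}U$ does not contain a negative simple root subgroup''; your subgroup $\bar{U}_{-\alpha}={}^{t\dot w}U_\gamma\subset \bar{U}\cap{}^{t\dot w}U$ is precisely that negative simple root subgroup, and your two pullbacks along $\iota$ are exactly the mechanism by which nontriviality of $\psi$ on it kills the stalk.
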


\begin{proof}
The argument is well known see for instance \cite{BBM}. We recall it briefly, let $g \in G$ then $F_{|\bar{U}gU}$ is nonzero only if $\psi$ is trivial on $\bar{U} \cap (^gU)$, only if $\bar{U} \cap (^gU)$ does not contain a negative simple root subgroup, that is, when $g \in \bar{U}TU$. 
\end{proof}

The proof of the next lemma is due to Bezrukavnikov and Deshpande in \cite{BezDesh}, we reproduce the argument for the convenience of the reader. 

\begin{proof}[Proof of lemma \ref{lem1}]
The statement about the support follows from lemma \ref{lemSupport} for $G \times G$. The sheaf $\Av^1_{\psi}K_w$ is $\bar{U} \times \bar{U}, \psi \times \psi^{-1}$-equivariant, therefore on $\bar{U} \times \bar{U} \times T \times T$, it is of the form $\mathcal{L}_{\psi} [2d_U] \boxtimes \mathcal{L}_{\psi^{-1}} \boxtimes A$ where $A = i_{T \times T}^* \Av^1_{\psi}K_w [-2d_U]$ and $i_{T \times T}$ is the inclusion of $T \times T \rightarrow G \times G$. 

Then we can assume that $w$ is a simple reflection, by the definition of $K_w$ and lemmas \ref{lemConvol2} and \ref{lemConvol3}. This is now a direct computation, we therefore fix $s$ a simple reflection and $\alpha$ the associated simple root.

Consider the Cartesian diagram

\[\begin{tikzcd}
	{\bar{U} \times T \times T} & {\bar{U} \times G/U \times G/U} \\
	{T \times T} & {G/U \times G/U}
	\arrow["a", from=1-2, to=2-2]
	\arrow["{i_{T \times T}}"', from=2-1, to=2-2]
	\arrow["\lambda", from=1-1, to=1-2]
	\arrow["{\pr_2}"', from=1-1, to=2-1]
\end{tikzcd}\]
where $a$ is the action map on the first coordinate, $\pr_2$ is the projection on $T \times T$ and $\lambda(u,t,t') = (u,u^{-1}t,t')$. By proper base change we have 
\begin{equation*}
i_{T \times T}^* \Av^1_{\psi}K_s [-2d_U] = \pr_{2,!}\lambda^* (\mathcal{L}_{\psi} \boxtimes K_s).
\end{equation*}

The sheaf $\lambda^*K_s$ is supported on the triples $(u,t,t')$ such that $t^{-1}ut' \in G_{\alpha}U$, which amounts to $t^{-1}t' \in \alpha^{\vee}(\Gm) := T_s$ and $u \in \bar{U} \cap (^{t^{-1}}(G_{\alpha}U)) = U_{-\alpha}$. Therefore write $u = x_{-\alpha}(x)$ for $x \in \Ga$ and $t^{-1}t = \alpha^{\vee}(z)$. Consider the following diagram 
\[\begin{tikzcd}
	& {\Ga \times \Gm \times T} \\
	{\bar{U} \times T \times T} & {\bar{U}\times G/U \times G/U} & {G/U \times G/U} \\
	&& \Ga
	\arrow["{\phi_{\alpha}}", from=2-3, to=3-3]
	\arrow["{\pr_2}"', from=2-2, to=2-3]
	\arrow["\lambda"', from=2-1, to=2-2]
	\arrow["\delta", from=1-2, to=2-2]
	\arrow["{\delta'}"', from=1-2, to=2-1]
\end{tikzcd}\]
where $\delta(x,z,t) = (x_{-\alpha}(x), t, \alpha^{\vee}(z)t)$ and $\pr_2$ is the projection. The preceeding discussion can be summed up as the adjunction map $\lambda^*(\mathcal{L}_{\psi} \boxtimes K_s) \rightarrow \delta_*\delta^*\lambda^*(\mathcal{L}_{\psi} \boxtimes K_s)$ is an isomorphism. Consider the composition 
\begin{equation*}
\Ga \times \Gm \times T \xrightarrow{\delta} \bar{U} \times G/U \times G/U \xrightarrow{\pr_2} G/U \times G/U \xrightarrow{\phi_{\alpha}} \Ga.
\end{equation*}
We denote this map $\tau$, it is given by $\tau : (x,z,t) \mapsto (xz\alpha(t))$. The sheaf $\delta^*\lambda^*(\mathcal{L}_{\psi} \boxtimes K_s)$ is then given by $(\id_{\Ga} - \tau)^*\mathcal{L}_{\psi}[2]$. Consider the diagram 
\[\begin{tikzcd}
	& {\Ga \times \Gm \times T} \\
	{T \times T} & {\Ga \times \Ga} \\
	& \Ga
	\arrow["{\id_{\Ga} \times \tau}", from=1-2, to=2-2]
	\arrow["{\delta"}"', from=1-2, to=2-1]
	\arrow[from=2-2, to=3-2]
\end{tikzcd}\]
where $\delta"$ is the composition $\pr_2\delta'$ and the vertical map $\Ga \times \Ga \rightarrow \Ga$ is the map $(x,y) \mapsto x-y$. We have $\pr_{2,!}\lambda^*(\mathcal{L}_{\psi} \boxtimes K_s) = \delta_!"(\id_{\Ga} - \tau)^*\mathcal{L}_{\psi}[2]$. We first compute it fiberwise, let $(t,t') \in T \times T$. The fiber of $\delta"$ above $(t,t')$ is nonempty only if there exists $z \in \Gm$ such that $t' = \alpha^{\vee}(z)t$, in which case the fiber is isomorphic to $\Ga$ via the projection onto $\Ga$. Let $z \in \Gm$ and $t' = t\alpha^{\vee}(z)$, under the identification $\delta"^{-1}(t,t') \simeq \Ga$, the sheaf $(\id_{\Ga} - \tau)^*\mathcal{L}_{\psi}[2]_{|\delta"^{-1}(t,t')}$ corresponds to $m_{1 - z\alpha(t)}^*\mathcal{L}_{\psi}[2]$ where $m_x : \Ga \rightarrow \Ga$ is the multiplication by $x \in k$. It follows that the fiber of $\delta_!"(\id_{\Ga} - \tau)^*\mathcal{L}_{\psi}[2]$ is nonzero only if $z\alpha(t) = 1$, that is, when $z = \alpha(t)^{-1}$, which further reduces to $t' = s(t)$. Finally consider the Cartesian diagram
\[\begin{tikzcd}
	T & {\Ga \times T} \\
	{T \times T} & {\Ga \times \Gm \times T.}
	\arrow["{i_s}"', from=1-1, to=2-1]
	\arrow["{\tilde{\delta}}"', from=1-2, to=1-1]
	\arrow["{\delta"}", from=2-2, to=2-1]
	\arrow["{\tilde{i}_s}", from=1-2, to=2-2]
\end{tikzcd}\]
We have 
\begin{align*}
\delta_!"(\id_{\Ga} - \tau)^*\mathcal{L}_{\psi}[2] &= i_{s,!}i_s^*\delta_!"(\id_{\Ga} - \tau)^*\mathcal{L}_{\psi}[2] \\
&= i_{s,!}\tilde{\delta}_!\tilde{i_s}^*(\id_{\Ga} - \tau)^*\mathcal{L}_{\psi}[2] \\
&= i_{s,!}\tilde{\delta}_!\Lambda[2] \\
&= i_{s,!}\Lambda. 
\end{align*}
The first line follows from our fiberwise computation, the second line from proper base change, the third line from the fact that $(\id_{\Ga} - \tau)\tilde{i}_s$ maps to $0 \in \Ga$ and the last one follows from the Kunneth formula and $\RGamma_c(\Ga, \Lambda[2])= \Lambda$. 
\end{proof}

\subsection*{Identification of the invariants.}

We now prove lemma \ref{lem2}. 

\begin{lem}\label{lemAction1}
Let $H$ be a connected unipotent algebraic group, let $X = Y \times H$ be a $H$-scheme with $H$ acting trivially on Y, let $K$ be a sheaf on $X \times X$ equivariant for the diagonal action of $H$, and let $\psi$ be character of $H^{\Frob_H}$. Then there is a $H^{\Frob_H}$-equivariant isomorphism. 
\begin{equation*}
\RGamma_c(X,(\id \times \Frob_X)^* \Av^1_{\psi} K) \simeq e_{\psi} \RGamma_c(X,(\id \times \Frob_X)^*K).
\end{equation*}
\end{lem}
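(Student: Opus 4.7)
The plan is to compare both sides by unfolding them via proper base change and the Lang torsor, reducing the equality to a cohomological identity for $\mathcal{L}_\psi$ on $H \times H$. Conceptually, $\Av^1_\psi$ projects kernels $K$ onto their $(H,\psi)$-equivariant part in the first variable, and this geometric projection should match the arithmetic projection $e_\psi$ on cohomology.

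First I would unfold the LHS. Proper base change applied to the Cartesian square formed by $a_1 : H \times X \times X \to X \times X$ and $\id \times \Frob_X : X \to X \times X$ identifies
\begin{equation*}
\RGamma_c(X, (\id \times \Frob_X)^* \Av^1_\psi K) \simeq \RGamma_c(H \times X, \mathcal{L}_\psi(u) \otimes K(x, \Frob_X(ux)))[2d_H].
\end{equation*}
Using $X = Y \times H$ and the diagonal $H$-equivariance of $K$, the kernel descends to a sheaf $K'$ on $Y \times Y \times H$ via $((z_1, h_1), (z_2, h_2)) \mapsto (z_1, z_2, h_1^{-1}h_2)$, and writing $x = (z, h)$ the integrand becomes $\mathcal{L}_\psi(u) \otimes K'(z, \Frob_Y(z), h^{-1} \Frob_H(u) \Frob_H(h))$.

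Second, I would unfold the RHS via the Lang torsor. The map $\Phi : Y \times H \to Y \times H$, $(z, h) \mapsto (z, h^{-1}\Frob_H(h))$, is an $H^{\Frob_H}$-torsor, and $(\id \times \Frob_X)^* K$ is canonically the pullback under $\Phi$ of the sheaf $(z, v) \mapsto K'(z, \Frob_Y(z), v)$. The projection formula together with the decomposition $\Phi_!\Lambda = \bigoplus_\chi \mathcal{L}_\chi$ into isotypic components for the $H^{\Frob_H}$-action yields
\begin{equation*}
e_\psi \RGamma_c(X, (\id \times \Frob_X)^* K) \simeq \RGamma_c(Y \times H, K'(z, \Frob_Y(z), v) \otimes \mathcal{L}_\psi(v)).
\end{equation*}

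To bridge the two, I would perform the change of variables $(u, h) \leftrightarrow (w, v)$ on $H \times H$ defined by $w = uh$ and $v = h^{-1}\Frob_H(w)$, which is a scheme automorphism with inverse $(h, u) = (\Frob_H(w) v^{-1}, w h^{-1})$. Under this substitution the LHS integrand transforms into $\mathcal{L}_\psi(w v \Frob_H(w)^{-1}) \otimes K'(z, \Frob_Y(z), v)$, with $w$ now appearing only in the first factor. The crucial claim is the canonical isomorphism $\nu^* \mathcal{L}_\psi \simeq \pi_2^* \mathcal{L}_\psi$ on $H \times H$, where $\nu(w, v) = w v \Frob_H(w)^{-1}$ and $\pi_2$ is the second projection; for the $\psi$ used in the paper (factoring through the abelianization of $H^{\Frob_H}$) this follows from multiplicativity of the character sheaf $\mathcal{L}_\psi$ combined with the canonical isomorphism $\Frob_H^* \mathcal{L}_\psi \simeq \mathcal{L}_\psi$. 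Granting this identity, integration over $w$ contributes a factor $\RGamma_c(H, \Lambda)[2d_H] = \Lambda$ (since a connected unipotent group is geometrically affine space), and the LHS collapses to the RHS. The main obstacle is establishing $\nu^*\mathcal{L}_\psi \simeq \pi_2^*\mathcal{L}_\psi$ canonically enough to track $H^{\Frob_H}$-equivariance through the comparison, and verifying that the final identifications respect the stated equivariance.
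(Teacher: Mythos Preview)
Your approach is correct and closely parallels the paper's: both unfold the two sides via proper base change and the Lang torsor, perform a change of variables on $H\times H$, and integrate out the free copy of $H$ using $\RGamma_c(H,\Lambda)[2d_H]\simeq\Lambda$. There is a harmless slip in your first base-change: the integrand should be $K(u^{-1}x,\Frob_X(x))$ (equivalently $K(x,u\,\Frob_X(x))$ by diagonal equivariance), not $K(x,\Frob_X(ux))$; this only replaces $\Frob_H(u)$ by $u$ in the third argument of $K'$ and the rest of your computation goes through after relabeling the free variable.

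The one substantive difference is how the key cancellation $\mathcal{L}_\psi(wv\Frob_H(w)^{-1})\simeq\mathcal{L}_\psi(v)$ is obtained. You invoke multiplicativity of $\mathcal{L}_\psi$ together with $\Frob_H^*\mathcal{L}_\psi\simeq\mathcal{L}_\psi$; this is valid because $\mathcal{L}_\psi$ is pulled back from a genuine character sheaf on $H^{ab}$ (every character of $H^{\Frob_H}$ factors through $(H^{ab})^{\Frob}$ and the Lang squares for $H$ and $H^{ab}$ are compatible), but that point deserves to be made explicit rather than assumed. The paper instead writes $\mathcal{L}_\psi=e_\psi\mathcal{L}_{H,!}\Lambda$, introduces an extra variable $h_1'$ with $h_1=\mathcal{L}_H(h_1')$, and recognizes the Cartesian square encoding $h_2^{-1}\mathcal{L}_H(h_1')\Frob_H(h_2)=\mathcal{L}_H(h_1'h_2)$; this avoids any appeal to multiplicativity and works uniformly without passing through $H^{ab}$. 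Both routes are short; yours is more direct once the character-sheaf property is granted, the paper's is more self-contained.
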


\begin{proof}
We begin with the following diagram,
\[\begin{tikzcd}
	& \pt \\
	{H \times Y \times H} & {Y \times H} \\
	{H \times Y \times H \times Y \times H} & {Y \times H \times Y \times H} \\
	{Y \times H \times Y \times H} & {Y \times Y \times H}
	\arrow["q"', from=2-2, to=1-2]
	\arrow["q", from=2-1, to=1-2]
	\arrow["{\pr_2}", from=2-1, to=2-2]
	\arrow["\alpha"', from=2-1, to=3-1]
	\arrow["{(\id \times \Frob_{Y \times H})}", from=2-2, to=3-2]
	\arrow["m"', from=3-1, to=3-2]
	\arrow["a", from=3-2, to=4-2]
	\arrow["{\pr_2}"', from=3-1, to=4-1]
	\arrow["a"', from=4-1, to=4-2]
\end{tikzcd}\]
where 
\begin{enumerate}
\item the map $m$ is the action map of $H$, that is $m(h_1, y_1, h_2, y_2, h_3) = (y_1, h_1h_2, y_2, h_3)$, 
\item the maps $\pr_2$ are the projections onto the last factors, 
\item the map $a$ is the quotient map by the diagonal action of $H$, that is $a(y_1,h_1,y_2,h_2) = (y_1, y_2, h^{-1}_1h_2)$, 
\item and $\alpha$ is the map $\alpha(h_1, y_1, h_2) = (h_1, y_1, h_1^{-1}h_2, \Frob_Y(y_1), \Frob_H(h_2))$. 
\end{enumerate}
The bottom square is not commutative and the top square is Cartesian. Let $K'$ be the sheaf on $Y \times Y \times H$ whose pullback to $X \times X$ is $K$. Denote also by $\zeta$ the following composition 
\begin{equation*}
H \times Y \times H \xrightarrow{\alpha} H \times Y \times H \times Y \times H \xrightarrow{\pr_2} Y \times H \times Y \times H \xrightarrow{a} Y \times Y \times H,
\end{equation*}
this map is given by $\zeta(h_1, y, h_2) = (y, \Frob_Y(y), h_2^{-1}h_1\Frob_H(h_2))$. Note that the following diagram is Cartesian 
\[\begin{tikzcd}
	{H \times Y \times H} & {Y \times H} \\
	{H \times Y \times H} & {Y \times Y \times H}
	\arrow["\zeta"', from=2-1, to=2-2]
	\arrow["{\id_Y \times \Frob_Y \times \mathcal{L}_H}", from=1-2, to=2-2]
	\arrow["{\tilde{m}}", from=1-1, to=1-2]
	\arrow["{\mathcal{L} \times\id_Y \times \id_H}"', from=1-1, to=2-1]
\end{tikzcd}\]
where $\tilde{m}(h_1,y,h_2) = (y, \Frob_Y(y), h_1h_2)$ and $\mathcal{L}_H$ is the Lang map of $H$. Finally note that $e_{\psi}\mathcal{L}_{H,*}\Lambda = \mathcal{L}_{\psi}$. Denote by $d_H = \dim H$, we can now compute

\begin{align*}
\RGamma_c(X, (\id \times \Frob_X)^*\Av^1_{\psi}K) &= q_!(\id \times \Frob_X)^*m_!(\mathcal{L}_{\psi}[2d_H] \boxtimes a^*K') \\
&= q_!\pr_{2,!}\alpha^*(\mathcal{L}_{\psi}[2d_H] \boxtimes a^*K') \\
&= q_!\pr_{2,!}\alpha^*(\pr_H^*\mathcal{L}_{\psi}[2d_H] \otimes \pr_2^*a^*K') \\
&= q_!\pr_{2,!}(\alpha^*\pr_H^*\mathcal{L}_{\psi}[2d_H] \otimes \alpha^*\pr_2^*a^*K') \\
&= q_!\pr_{2,!}(\pr_H^*\mathcal{L}_{\psi}[2d_H] \otimes \zeta^*K') \\
&= e_{\psi}q_!\pr_{2,!}(\pr_H^*\mathcal{L}_{H,!}\Lambda[2d_H] \otimes \zeta^*K') \\
&= e_{\psi}q_!\pr_{2,!}(\mathcal{L}_{H} \times \id_Y \times \id_H)_!\pr_H^*\Lambda[2d_H] \otimes \zeta^*K') \\
&= e_{\psi}q_!\pr_{2,!}(\mathcal{L}_{H} \times \id_Y \times \id_H)_!(\pr_H^*\Lambda[2d_H] \otimes (\mathcal{L}_{H} \times \id_Y \times \id_H)^*\zeta^*K') \\
&= e_{\psi}q_!\pr_{2,!}(\mathcal{L}_{H} \times \id_Y \times \id_H)_!(\mathcal{L}_{H} \times \id_Y \times \id_H)^*\zeta^*K'[2d_H] \\
&= e_{\psi}q_!\pr_{2,!}(\mathcal{L}_{H} \times \id_Y \times \id_H)_!\tilde{m}^*(\id_Y \times \Frob_Y \times \mathcal{L}_H)^*K'[2d_H] \\
&= e_{\psi}q_!(\id_Y \times \Frob_Y \times \mathcal{L}_H)^*K' \\
&= e_{\psi}\RGamma_c(X, (\id \times \Frob_X)^*K).
\end{align*}
The first line is obtained from proper base change, the second one by the projection formula. We then unfold the $\boxtimes$ and denote by $\pr_H$ the projection onto the first copy of $H$ from both $H \times X \times X$ and $H \times X$, in particular we have $\pr_H = \pr_H \alpha$ which yields the fifth line. The sixth line come from the identification $\mathcal{L}_{\psi} = e_{\psi}\mathcal{L}_H \Lambda$. The seventh line comes from proper base change via $\mathcal{L}_H \pr_H = \pr_H (\mathcal{L}_{H} \times Y \times H)$. The eighth line comes from the projection formula. The tenth line from the second diagram above and the eleventh one from the Kunneth formula and the fact that $\RGamma_c(H, \Lambda) = \Lambda[-2d_H]$. The final line comes from the fact that $(\id_Y \times \Frob_Y \times \mathcal{L}_H) = (\id \times \Frob_X)^*a^*$. 
\end{proof}

\begin{proof}[Proof of lemma \ref{lem2}]
From lemma \ref{lem1}, we know that $\Av_{\psi}K_w$ is supported on $\bar{U}TU/U \times \bar{U}TU/U$ which is a space of the form $X \times \bar{U}$ so lemma \ref{lemAction1} applies, giving an isomorphism $\RGamma_c(\bar{U}TU/U, (\id \times \Frob)^*\Av_{\psi}K_w) = e_{\psi}\RGamma_c(\bar{U}TU/U, (\id \times \Frob)^*K_w)$. 

Now the space $G/U$ is stratified by $\bar{B}$-orbits, given $\bar{B}wU/U$ a $\bar{B}$-orbit and $A$ a $\bar{U}^{\Frob_{\bar{U}}}$-equivariant sheaf on $\bar{B}wU/U$, we have $e_{\psi}\RGamma_c(\bar{B}wU, A) = 0$ if $w \neq 1$, this follows immediately from the genericity of $\psi$. Therefore $e_{\psi}\RGamma_c(G/U, (\id \times \Frob)^*K_w) = e_{\psi}\RGamma_c(\bar{U}TU/U, (\id \times \Frob)^*K_w)$. 
\end{proof}

\bibliographystyle{alpha}
\bibliography{KLSheaves}

\end{document}